 \newtheorem{theorem}{Theorem}[section]
 \newtheorem{corollary}{Corollary}[section]
 \newtheorem{definition}{Definition}[section]
 \newtheorem{proposition}{Proposition}[section]
 \newtheorem{lemma}{Lemma}[section]
 \newtheorem{remark}{Remark}[section]
 \newtheorem{example}{Example}[section]
 \newcommand{\tabincell}[2]{\begin{tabular}{@{}#1@{}}#2\end{tabular}}
 \title{\bfseries A skeleton model to enumerate \\standard puzzle sequences}
 \author{Jiaxi Lu\\
 \small Chongqing University\\[-0.8ex]
 \small Chongqing, China\\
 \small\tt jiaxi\_lu@outlook.com\\
 \and
 Yuanzhe Ding\\
 \small Chongqing University\\[-0.8ex]
 \small Chongqing, China\\
 \small\tt dingyuanzhe@yeah.net}
 \date{(Dated: \today)}
\begin{document}
 \maketitle
 \begin{abstract}
     Guo-Niu Han [arXiv:2006.14070 [math.CO]] has introduced a new combinatorial object named standard puzzle. We use digraphs to show the relations between numbers in standard puzzles and propose a skeleton model. By this model, we solve the enumeration problem of over fifty thousand standard puzzle sequences. Most of them can be represented by classical numbers, such as Catalan numbers, double factorials, Secant numbers and so on. Also, we prove several identities in standard puzzle sequences.
 \end{abstract}
 
 \noindent{\bfseries Keywords: standard puzzle, integer sequence, secant number, Catalan number, Fibonacci number} 
 \section{Introduction}
     In recent years, there has been tremendous interest in developing new combinatorial model for classical sequences, such as the Catalan numbers \cite{Catalan}, the Euler numbers \cite{sec,D-sec,D-tan,Entringer} and so on.
     In a paper by Guo-Niu Han, he introduced ``a large family of combinatorial objects, called standard puzzles'' \cite{Han}.
     After computing a large number of standard puzzle sequences, he found that only a small part of them already appear in OEIS \cite{OEIS}, and then he gave combinatorial proofs of some of them.
     In 2020, Donald Knuth was concerned with a special case of standard puzzles, the support of which is a set of ``vortices --- that is, it travels a cyclic path from smallest to largest, either clockwise
     or counterclockwise'' \cite{Knuth}. In the end, he found and proved that this sequence is exactly $2n U_n$, in which $U_n$ is a familiar sequence in the OEIS \cite{OEIS}, entry A122647. 
     
     However, in spite of quite a number of contributions dealing with the standard puzzle sequence, there are still a great many standard puzzles that have not been enumerated yet. In this paper, we propose a skeleton model, which leads us to proofs for a series of standard puzzle sequences and additionally we get some equations in enumerating problem of standard puzzle.
     
     One question raised in Han's article is whether there is a standard puzzle sequence 
     in which the secant sequence exists. Using our skeleton model, we define simple pieces and find the standard puzzle sequence of a simple piece \{A, B, D, G, H\} (by the notation in \cite{Han}) is exactly the secant sequence.
     Besides we divide all simple pieces into four classes 1, 2, 3 and 4 according to the relations in each columns. Each class has 20 kinds of simple pieces in it and the standard puzzle sequences of all 80 simple pieces have already been enumerated (Some are trivial or proved by Han \cite{Han}, while the others are shown in this paper.). In addition, we enumerate the sequences of standard puzzles, which satisfy the following conditions respectively. (Only one case in (1) and (2) hasn't been enumerated.)
     
     (1) The union of a 1-simple or 4-simple piece and a set of $B_i$ or $C_i$ forms its support.

     (2) The union of a 1-simple and a 4-simple piece and a set of $B_i$ or $C_i$ forms its support.

     To sum up, we enumerate $(20-1)\cdot2^6\cdot2\cdot2 (in (1))+(20-1)\cdot(20-1)\cdot2^6\cdot2 (in (2))=51072$ standard puzzle sequences by our skeleton model.

     This paper is organized as follows. We devote Section 2 to recall the definition of standard puzzles and define some notation to be used in this paper. In Section 3, we propose the skeleton model and by this model we define simple pieces and enumerate the sequences of all kinds of simple pieces.
     Besides, we enumerate some other standard puzzles by the skeleton model in Section 4.
     Finally, the proofs of some identities in standard puzzle sequences are presented in Section 5.
 \section{Definition and notation}
     Although Han proposed the definition of the standard puzzle combinatorial object in \cite{Han}, we recall it here to make the paper self-contained.\label{Se:2}
 \begin{definition}
     A piece denotes a square that has four numbers, called labels, written on its corners. And a puzzle is a connected arrangement of pieces in the $\mathbb{Z} \times \mathbb{Z} $-plane such that the joining corners of all the pieces have the same labels. A puzzle such that the set of all its labels is simply \{1, 2, \ldots , m\} is called a standard puzzle. In this case m is the number of vertices of the puzzle. 
 \end{definition}
 \begin{remark}
     In this paper, we focus our attention on the standard puzzles of a special shape, which are n standard pieces concatenated in a row. We call these puzzles standard n-puzzles.
 \end{remark}
     By definition, we can see that the four labels of a piece occurring in a standard puzzle are distinct. This means that we can replace the four labels with \{1, 2, 3, 4\} respecting the label ordering which yields a \emph{standard piece}. We represent this operation with a reduction. Apparently, there are $4!=24$ different standard pieces. All of these 24 standard pieces form a set, called $\mathcal{S}\mathcal{P}$. For convenience, we have divided the 24 types of standard pieces into four categories A, B, C, and D according to the order of two numbers in each column. And whthin each category, we have labeled them with numbers 1 to 6, as shown in the Table \ref{1}.
 \begin{definition}
     Given a standard puzzle $\alpha$, a set $\mathcal{P}$ of standard pieces is called a support of the puzzle $\alpha$ if the reduction of each piece occurring in the puzzle $\alpha$ is an element of $\mathcal{P}$ and the minimal support of $\alpha$ is the set of all different pieces of $\alpha$ after reduction.
 \end{definition}
     For convenience, standard puzzles will be represented by two-row matrices. For example, $\begin{bmatrix}3 6 8 7\\ 1 2 4 5\end{bmatrix}$ stands for the following picture. This puzzle contains three pieces, but only two of them are different as standard pieces which are
     $\begin{bmatrix}
         \begin{smallmatrix}
             3 4\\1 2
         \end{smallmatrix}
     \end{bmatrix}$ 
     and 
     $\begin{bmatrix}
        \begin{smallmatrix}
            4 3\\1 2
        \end{smallmatrix}
     \end{bmatrix}$.
     So the minimal support is the set $\bigl\{
     \begin{bmatrix}
        \begin{smallmatrix}
            3 4\\1 2
        \end{smallmatrix}
     \end{bmatrix}, 
     \begin{bmatrix}
        \begin{smallmatrix}
            4 3\\1 2
        \end{smallmatrix}
     \end{bmatrix}\bigr\}$ and each set of pieces containing these two pieces is a support of the puzzle.

     \begin{figure}[h]
         \centering
         \includegraphics[scale=0.4]{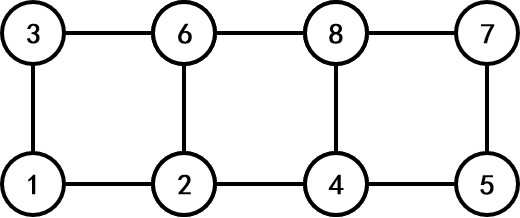}
         \caption{A standard 3-puzzle}
     \end{figure}
 \begin{table}[h]
     \[\begin{aligned} 
        & A_1=A=
                \begin{bmatrix}
                    \begin{smallmatrix}
                        4 3\\1 2
                    \end{smallmatrix}
                \end{bmatrix}&
            & A_2=B=
                \begin{bmatrix}
                    \begin{smallmatrix}
                        3 4\\1 2
                    \end{smallmatrix}
                \end{bmatrix}&
            & A_3=D=
                \begin{bmatrix}
                    \begin{smallmatrix}
                        2 4\\1 3
                    \end{smallmatrix}
                \end{bmatrix}&
            & A_4=H=
                \begin{bmatrix}
                    \begin{smallmatrix}
                        3 4\\2 1
                    \end{smallmatrix}
                \end{bmatrix}&
            & A_5=G=
                \begin{bmatrix}
                    \begin{smallmatrix}
                        4 3\\2 1
                    \end{smallmatrix}
                \end{bmatrix}&
            & A_6=N=
                \begin{bmatrix}
                    \begin{smallmatrix}
                        4 2\\3 1
                    \end{smallmatrix}
                \end{bmatrix}&
        \\
            & B_1=C=
                \begin{bmatrix}
                    \begin{smallmatrix}
                        4 2\\1 3
                    \end{smallmatrix}
                \end{bmatrix}&
            & B_2=E=
                \begin{bmatrix}
                    \begin{smallmatrix}
                        3 2\\1 4
                    \end{smallmatrix}
                \end{bmatrix}&
            & B_3=F=
                \begin{bmatrix}
                    \begin{smallmatrix}
                        2 3\\1 4
                    \end{smallmatrix}
                \end{bmatrix}&
            & B_4=L=
                \begin{bmatrix}
                    \begin{smallmatrix}
                        3 1\\2 4
                    \end{smallmatrix}
                \end{bmatrix}&
            & B_5=J=
                \begin{bmatrix}
                    \begin{smallmatrix}
                        4 1\\2 3
                    \end{smallmatrix}
                \end{bmatrix}&
            & B_6=Q=
                \begin{bmatrix}
                    \begin{smallmatrix}
                        4 1\\3 2
                    \end{smallmatrix}
                \end{bmatrix}&
        \\
            & C_1=X=
                \begin{bmatrix}
                    \begin{smallmatrix}
                        1 3\\4 2
                    \end{smallmatrix}
                \end{bmatrix}&
            & C_2=R=
                \begin{bmatrix}
                    \begin{smallmatrix}
                        1 4\\3 2
                    \end{smallmatrix}
                \end{bmatrix}&
            & C_3=K=
                \begin{bmatrix}
                    \begin{smallmatrix}
                        1 4\\2 3
                    \end{smallmatrix}
                \end{bmatrix}&
            & C_4=P=
                \begin{bmatrix}
                    \begin{smallmatrix}
                        2 4\\3 1
                    \end{smallmatrix}
                \end{bmatrix}&
            & C_5=V=
                \begin{bmatrix}
                    \begin{smallmatrix}
                        2 3\\4 1
                    \end{smallmatrix}
                \end{bmatrix}&
            & C_6=U=
                \begin{bmatrix}
                    \begin{smallmatrix}
                        3 2\\4 1
                    \end{smallmatrix}
                \end{bmatrix}&
        \\
            & D_1=Z=
                \begin{bmatrix}
                    \begin{smallmatrix}
                        1 2\\4 3
                    \end{smallmatrix}
                \end{bmatrix}&
            & D_2=T=
                \begin{bmatrix}
                    \begin{smallmatrix}
                        1 2\\3 4
                    \end{smallmatrix}
                \end{bmatrix}&
            & D_3=M=
                \begin{bmatrix}
                    \begin{smallmatrix}
                        1 3\\2 4
                    \end{smallmatrix}
                \end{bmatrix}&
            & D_4=S=
                \begin{bmatrix}
                    \begin{smallmatrix}
                        2 1\\3 4
                    \end{smallmatrix}
                \end{bmatrix}&
            & D_5=Y=
                \begin{bmatrix}
                    \begin{smallmatrix}
                        2 1\\4 3
                    \end{smallmatrix}
                \end{bmatrix}&
            & D_6=W=
                \begin{bmatrix}
                    \begin{smallmatrix}
                        3 1\\4 2
                    \end{smallmatrix}
                \end{bmatrix}&
        \end{aligned}\]
     \caption{The twenty-four pieces and its codes}
     \label{1}
\end{table}
    \begin{remark}
         In this table, the three columns of each expression represent the notation used in this paper, the notation used in Guo-Niu Han's paper and the matrix representation of standard pieces respectively. Additionally, notice that all the standard pieces $B_i$ and $C_i$ $(1\leq i\leq6)$ have a different relations in the left and right columns so we call $B_i$ and $C_i$ a 1-converter and a 2-converter respectively. 
    \end{remark}

     Then, let us define the standard puzzle sequence.

 \begin{definition}
      Let $\mathcal{P}$ be a subset of $\mathcal{S}\mathcal{P}$. For $n\in \mathbb{N}^+$, we denote by $S_n(\mathcal{P})$ the set of the standard n-puzzles, each of which has $\mathcal{P}$ as its support
      , and let $s_n(\mathcal{P}):=\bigl|S_n(\mathcal{P})\bigr|$ be the cardinality of $S_n(\mathcal{P})$.
     In addition, we call $\{s_n(\mathcal{P})\}_n$ the standard puzzle sequence corresponding to $\mathcal{P}$.
 \end{definition}
 \begin{example}
     Let $\mathcal{P}=\{A_2, A_3\}=\bigl\{
         \begin{bmatrix}3&4 \\ 1 & 2\end{bmatrix},
         \begin{bmatrix}2&4 \\ 1 & 3\end{bmatrix} 
         \bigr\}$,Then,
     \[
         \begin{aligned}
             \{S_1(\mathcal{P})\} =\bigl\{
                 &\begin{bmatrix}3 4 \\ 12\end{bmatrix},
                 \begin{bmatrix}2 4 \\ 1 3\end{bmatrix}
                 \bigr\}\\
             \{S_2(\mathcal{P})\} =\bigl\{
                 &\begin{bmatrix}4 5 6\\ 1 2 3\end{bmatrix},
                 \begin{bmatrix}3 5 6\\ 1 2 4\end{bmatrix},
                 \begin{bmatrix}3 4 6\\ 1 2 5\end{bmatrix},
                 \begin{bmatrix}2 5 6\\ 1 3 4\end{bmatrix},
                 \begin{bmatrix}2 4 6\\ 1 3 5\end{bmatrix}
                 \bigr\}\\
             \{S_3(\mathcal{P})\} =\bigl\{
                 &\begin{bmatrix}5 6 7 8\\ 1 2 3 4\end{bmatrix},
                 \begin{bmatrix}4 6 7 8\\ 1 2 3 5\end{bmatrix},
                 \begin{bmatrix}4 5 7 8\\ 1 2 3 6\end{bmatrix},
                 \begin{bmatrix}4 5 6 8\\ 1 2 3 7\end{bmatrix},
                 \begin{bmatrix}3 6 7 8\\ 1 2 4 5\end{bmatrix},\\
                 &\begin{bmatrix}3 5 7 8\\ 1 2 4 6\end{bmatrix},
                 \begin{bmatrix}3 5 6 8\\ 1 2 4 7\end{bmatrix},
                 \begin{bmatrix}3 4 7 8\\ 1 2 5 6\end{bmatrix},
                 \begin{bmatrix}3 4 6 8\\ 1 2 5 7\end{bmatrix},
                 \begin{bmatrix}2 6 7 8\\ 1 3 4 5\end{bmatrix},\\&
                 \begin{bmatrix}2 5 7 8\\ 1 3 4 6\end{bmatrix},
                 \begin{bmatrix}2 5 6 8\\ 1 3 4 7\end{bmatrix},
                 \begin{bmatrix}2 4 7 8\\ 1 3 5 6\end{bmatrix},
                 \begin{bmatrix}2 4 6 8\\ 1 3 5 7\end{bmatrix}
                 \bigr\}
         \end{aligned}
     \]
 \end{example}

     Indeed, the standard puzzle sequence of $\{A_2, A_3\}$ is shown in \cite{Han} to be the sequence of Catalan numbers:
     \[\{2, 5, 14, 42, 132, 429, 1430, 4862, 16796, 58786, 208012,\ldots\}\]

     Moreover, Guo-Niu Han defined three basic transformations and now we rephrase them here using our notation. The original statement of these transformations can be found in \cite{Han}, which are as follows.
     
     (T1) Exchanging left column and right column in every piece;

     (T2) Exchanging top row and bottom row in every piece;

     (T3) Replacing each label a by (5 − a) in every piece.

 \begin{definition}
     (1) We denote by $F_1$ the map: $\mathcal{S}\mathcal{P}\mapsto \mathcal{S}\mathcal{P}$, which sends the element $A_i, B_i, C_i$ and $D_i$ to $A_{i+3}, C_{i+3}, B_{i+3}$ and $D_{i+3}$ respectively, for i $\in$ \{1, 2, 3, 4, 5, 6\}, and we make the convention that $X_j$ is equal to $X_{k}$ iff $j \equiv k\pmod 6 $ $(X\in \{A, B, C, D\})$.

     (2) We denote by $F_2$ the map: $\mathcal{S}\mathcal{P}\mapsto \mathcal{S}\mathcal{P}$, which sends the element $A_i, B_i, C_i$ and $D_i$ to $D_i, C_i, B_i$ and $A_i$ respectively, for i $\in$ \{1, 2, 3, 4, 5, 6\}.
    
     (3) We denote by $F_3$ the map: $\mathcal{S}\mathcal{P}\mapsto \mathcal{S}\mathcal{P}$, which sends the element $X_i$ to $Y_j$, for $X\in\{A, B, C, D\} ,\ i \in \{1, 2, 3, 4, 5, 6\}$. Y and j are defined as follows.
         
     \begin{equation*}
         Y=\left\{
             \begin{aligned}
                 &A, &X=D\\
                 &B, &X=C\\
                 &C, &X=B\\
                 &D, &X=A
             \end{aligned}
             \right. 
             \qquad\qquad
         j=\left\{
             \begin{aligned}
                 &1, &i=1\\
                 &2, &i=5\\
                 &3, &i=6\\
                 &4, &i=4\\
                 &5, &i=2\\
                 &6, &i=3
             \end{aligned}
             \right.
     \end{equation*}
     \label{Definition}
 \end{definition}
    
     


    
     This definition naturally gives rise to the following proposition.

     \begin{proposition}
         For a map $F_i ( i\in\{1, 2, 3\} )$, let $\mathcal{P}$ be a subset of $\mathcal{S}\mathcal{P}$, and we have,\label{pr}
         \begin{equation}
             s_n(\mathcal{P}) = s_n(F_i (\mathcal{P}))
         \end{equation}
     \end{proposition}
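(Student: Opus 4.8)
The plan is to realise each combinatorial map $F_i$ as the ``reduced shadow'' of a genuine geometric operation $T_i$ acting on whole standard $n$-puzzles, and then to check that $T_i$ restricts to a bijection between $S_n(\mathcal{P})$ and $S_n(F_i(\mathcal{P}))$. Writing a standard $n$-puzzle as a two-row matrix $\left[\begin{smallmatrix} a_0 & a_1 & \cdots & a_n \\ b_0 & b_1 & \cdots & b_n\end{smallmatrix}\right]$ with label set $\{1,\dots,2(n+1)\}$, I would let $T_1$ reverse the order of the columns, $T_2$ interchange the two rows, and $T_3$ replace every label $a$ by $2(n+1)+1-a$; these are exactly (T1), (T2), (T3) read off on the whole puzzle. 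First I would verify that each $T_i$ sends a standard $n$-puzzle to a standard $n$-puzzle: the matching condition on shared corners survives because $T_1,T_2$ only permute positions while $T_3$ applies one fixed order-reversing bijection to every label, and the label set remains $\{1,\dots,2(n+1)\}$. Since reversing columns, swapping rows, and $a\mapsto 2(n+1)+1-a$ are each their own inverse, every $T_i$ is an involution, hence a bijection on the set of all standard $n$-puzzles.

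The heart of the argument is a compatibility lemma: for every piece occurring in $\alpha$, the reduction of the corresponding piece of $T_i(\alpha)$ equals $F_i$ applied to the reduction of that piece. This holds because reduction depends only on the relative order of the four labels of a piece. The move $T_1$ swaps the left and right columns of each individual piece and $T_2$ swaps its top and bottom rows---purely positional operations that commute with passing to relative order---while $T_3$ reverses the global order of all labels and so reverses the relative order inside each piece, which on the reduced labels $\{1,2,3,4\}$ is precisely $a\mapsto 5-a$. It then remains to confirm that these three piece-level operations (column swap, row swap, $a\mapsto 5-a$) coincide with $F_1,F_2,F_3$ as defined on $\mathcal{S}\mathcal{P}$; this is a finite check against Table~\ref{1} (for instance $A_1=\left[\begin{smallmatrix}4&3\\1&2\end{smallmatrix}\right]$ has column swap $\left[\begin{smallmatrix}3&4\\2&1\end{smallmatrix}\right]=A_4=F_1(A_1)$, row swap $\left[\begin{smallmatrix}1&2\\4&3\end{smallmatrix}\right]=D_1=F_2(A_1)$, and image $\left[\begin{smallmatrix}1&2\\4&3\end{smallmatrix}\right]=D_1=F_3(A_1)$ under $a\mapsto 5-a$). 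I expect this verification to be the main, if routine, obstacle, since it must be carried through consistently over all $24$ pieces.

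With the lemma in hand the conclusion is immediate. The set of reduced pieces of $T_i(\alpha)$ is exactly the $F_i$-image of the set of reduced pieces of $\alpha$, and since $F_i$ is a bijection of $\mathcal{S}\mathcal{P}$ this shows that $\mathcal{P}$ is a support of $\alpha$ precisely when $F_i(\mathcal{P})$ is a support of $T_i(\alpha)$; hence $T_i$ maps $S_n(\mathcal{P})$ into $S_n(F_i(\mathcal{P}))$. Because each $F_i$ is in fact an involution of $\mathcal{S}\mathcal{P}$ (directly from Definition~\ref{Definition}: $F_1$ and $F_2$ square to the identity, and the index assignment $1,5,6,4,2,3\mapsto 1,2,3,4,5,6$ of $F_3$ together with $A\leftrightarrow D$, $B\leftrightarrow C$ does too), applying $T_i$ once more returns $S_n(F_i(\mathcal{P}))$ to $S_n(\mathcal{P})$. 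Thus $T_i$ restricts to a bijection between $S_n(\mathcal{P})$ and $S_n(F_i(\mathcal{P}))$, and comparing cardinalities gives $s_n(\mathcal{P})=s_n(F_i(\mathcal{P}))$, as claimed.
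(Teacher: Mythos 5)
Your proposal is correct and takes essentially the same route as the paper: the paper's proof is a one-liner asserting that Han's transformations (T1)--(T3), acting on whole puzzles, are bijections between $S_n(\mathcal{P})$ and $S_n(F_i(\mathcal{P}))$, which is precisely your $T_i$ construction. You have merely made explicit the details the paper leaves implicit (the piece-level compatibility check against Table~\ref{1}, the involutive property, and the support equivalence), all of which check out.
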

     \begin{proof}
         By the definition of Guo-Niu Han's transformations, we can easily see that these operations are all bijections between $S_n(\mathcal{P})$ to $S_n(F_i (\mathcal{P}))$. Hence the cardinality $s_n(\mathcal{P})$ is equal to $s_n(F_i(\mathcal{P}))$.
     \end{proof}


     
     


    
 \section{Simple puzzles defined by the skeleton model}
     \subsection{The definition of a skeleton model and a simple puzzle}
     Notice that a standard puzzle can be seen as a graph. And if we use the directions on a graph to represent the relations between numbers in a standard puzzle, we can get a directed graph. Then the idea of the skeleton model is born. In fact this idea is inspired by a proof in \cite{Han}. Here are some definitions of a skeleton and a simple piece.
     \begin{definition}
         Given a simple directed graph with four vertices a, b, c and d, we fix the four vertices on a square as shown in figure \ref{fig:square}. We call this graph a basic skeleton(denoted by B(a, b, c, d)), if it satisfies the following conditions.

         (1) It doesn't have a directed circle.

         (2) If there are more than one directed path between two vertices, then they have the same length.
     \end{definition}
     \begin{figure}[h]
         \centering
         \includegraphics[scale=0.4]{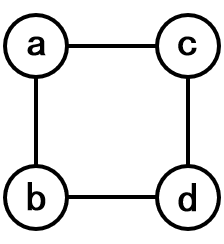}
         \caption{A basic skeleton}
         \label{fig:square}
     \end{figure}
     \begin{remark}
         By definition, we can see that each basic skeleton $B(a, b, c, d)$ is the Hasse diagram of a certain poset. We call this poset the poset of $B$. The definition of the Hasse diagram can be seen in \cite{Stanley}.
     \end{remark}
     \begin{definition}
         A basic skeleton is called a i-basic skeleton
         , if it meets the condition $(i)$, for $1\leq i \leq 4$.
         
         (1)There is a directed path from b to a and from d to c respectively.

         (2)There is a directed path from b to a and from c to d respectively.

         (3)There is a directed path from a to b and from d to c respectively.

         (4)There is a directed path from a to b and from c to d respectively.
     \end{definition}
     \begin{definition}
         Given an i-basic skeleton B(a, b, c, d),
         a i-simple piece generated by $B$ is a subset of $\mathcal{S}\mathcal{P}$(denoted by $\mathcal{P}(B)$). A standard piece is in $\mathcal{P}(B)$ if and only if its poset is a linear extension of the poset of $B$. Then a puzzle which has $\mathcal{P}(B)$ as its support is a simple puzzle generated by $B$.
     \end{definition}
     For example,
     \begin{example}
         Let B be a graph as shown in figure \ref{fig:B}. Then a simple piece generated by B is $\{A_1, A_2, A_3, A_4, A_5\}$ and each standard puzzle of $S(\mathcal{P}(B))$ is a simple puzzle generated by B.\label{ex:secant}
     \end{example}
     \begin{figure}[h]
         \centering
         \includegraphics[scale=0.4]{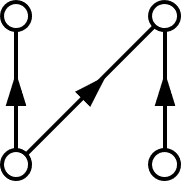}
         \caption{The basic skeleton of $\{A_1, A_2, A_3, A_4, A_5\}$}
         \label{fig:B}
     \end{figure}
     \begin{remark}
         Similar to the definition of the basic skeleton, we can draw a Hasse diagram to 
         descibes the relations between numbers in certain standard n-puzzle. If there is a directed path from x to y it means y > x. We call this graph a skeleton. For example, in example \ref{ex:secant} the skeleton of $\{A_1, A_2, A_3, A_4, A_5\}$ is shown in figure \ref{fig:secant1}.
         \begin{figure}[h]
             \centering
             \includegraphics[scale=0.4]{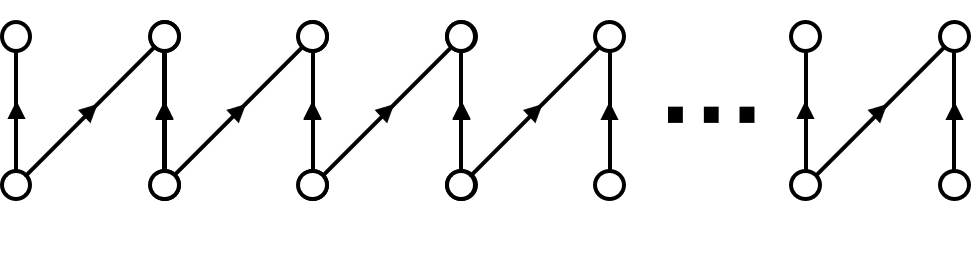}
             \caption{The skeleton of $\{A_1, A_2, A_3, A_4, A_5\}$}
             \label{fig:secant1}
         \end{figure} 

         In addition, both a 2-simple piece and a 3-simple piece have a different relations in the left and right columns. This means that we can not concatenate any one of them in a line. So for a 2-simple or 3-simple piece $\mathcal{P}(B)$, $s_n(\mathcal{P}(B))=0$ for $n\geq 2$. And by the proposition \ref{pr}, for any 4-simple piece $\mathcal{P}(B)$, we can find a 1-simple piece $F_2(\mathcal{P}(B))$such that 
         \begin{equation*}
             s_n(\mathcal{P}(B)) = s_n(F_2 (\mathcal{P}(B)))
         \end{equation*}
         Therefore, in this section we will concentrate on 1-simple puzzles to get the properties of all four kinds of simple puzzles.
    
    
    
    
    
    
 \end{remark}
    
         By definition, we can learn that there are twenty\footnote{By definition of basic skeletons, we just have to think about the four edges in a 4-vertices digraph except the edges ab and cd(seeing figure \ref{fig:square}). Then we count the number of 1-simple pieces in order of the number of directed edges in the skeletons and it is $1+9+8+2=20$. }kinds of 1-simple pieces in total.      In the following subsections, we will prove the sequences of all twenty simple pieces as shown in the appendix \ref{table}. (Some proofs have been given by Han in \cite{Han} or trivial, and we list them at the last column as a remark.)
   

     \subsection{Secant numbers}
         In Guo-Niu Han's paper \cite{Han}, he found the tangent number sequence occurs in the standard puzzle sequence of $\{A_1, A_5, B_1, B_2\}$ and ``developed a computer programme to generate the puzzle sequences up to order |$\mathcal{P}$| = 4" \cite{Han}, finding no secant number sequence. Then he asked a question about whether and where the secant numbers exist.

         However, when we are concerned with the standard puzzle sequence of the simple piece in example \ref{ex:secant}, we were surprised to find that this sequence is exactly a secant sequence.

 
         That is,
         \begin{theorem} Let $\mathcal{P}=\{A_1, A_2, A_3, A_4, A_5\}$, then we have,
             \begin{equation}
                 s_n(\mathcal{P})=S(n+1)
             \end{equation}
             In which S(n) is the n-th secant number in the OEIS, entry A000364.\label{thm:Secant}
         \end{theorem}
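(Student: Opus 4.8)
The plan is to reinterpret $S_n(\mathcal{P})$ as the set of linear extensions of a single poset — the skeleton of the whole $n$-puzzle — and then to recognise that skeleton as a fence whose linear extensions are enumerated by the secant numbers.

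First I would identify the pieces of $\mathcal{P}$ intrinsically. Writing $a,b,c,d$ for the top-left, bottom-left, top-right and bottom-right labels of a piece (as positioned in Figure \ref{fig:square}), one checks directly that $\{A_1,\dots,A_5\}$ are exactly the standard pieces obeying $a>b$, $c>d$ and $c>b$: these are precisely the three order relations read off from the basic skeleton $B$ of Figure \ref{fig:B}, and the only other category-$A$ piece, $A_6$, is excluded because it violates $c>b$. This is just the defining property that $\mathcal{P}(B)$ is the set of linear extensions of the poset of $B$.

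Next I would build the global skeleton. Label the top vertices of a standard $n$-puzzle by $x_1,\dots,x_{n+1}$ and the bottom vertices by $y_1,\dots,y_{n+1}$ from left to right, so the $i$-th piece sits in columns $i$ and $i+1$. The puzzle has $\mathcal{P}$ as a support, i.e.\ every piece reduces into $\mathcal{P}$, if and only if each piece satisfies the three relations above; intersecting these over the $n$ pieces leaves exactly
\[
 y_i<x_i\ (1\le i\le n+1),\qquad y_i<x_{i+1}\ (1\le i\le n),
\]
which is the skeleton of Figure \ref{fig:secant1}. Denote the corresponding poset on the $2n+2$ vertices by $P_n$. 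A standard $n$-puzzle is exactly an assignment of the labels $\{1,\dots,2n+2\}$ to its vertices, and such a puzzle lies in $S_n(\mathcal{P})$ precisely when the assignment respects $P_n$; hence $s_n(\mathcal{P})$ equals the number of linear extensions of $P_n$.

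Reading a linear extension along the path $x_1,y_1,x_2,y_2,\dots,x_{n+1},y_{n+1}$, the relations of $P_n$ translate word-for-word into $w_1>w_2<w_3>w_4<\cdots>w_{2n+2}$, so $P_n$ is a fence and its linear extensions are in bijection with the down-up alternating permutations of $[2n+2]$. (For $n=1$ these are $2143,3142,3241,4132,4231$, the five reading words of $A_1,\dots,A_5$, giving $s_1(\mathcal{P})=5$.) Since the alternating permutations of an even set $[2n+2]$ are counted by the secant numbers, this yields $s_n(\mathcal{P})=S(n+1)$. The main obstacle is exactly this final enumeration: if one may cite the classical fact that alternating permutations are counted by the Euler (zig-zag) numbers, with the even-length case being the secant sequence A000364, the proof is complete; for a self-contained argument I would instead refine the count by the label placed on a boundary vertex and run the Seidel–Entringer–Arnold (boustrophedon) recurrence, checking that the even-length totals are the Taylor coefficients of $\sec x$.
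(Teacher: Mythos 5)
Your proposal is correct and takes essentially the same route as the paper: both reduce membership in $S_n(\mathcal{P})$ to the zigzag skeleton and read the labels along the path $x_1,y_1,x_2,y_2,\dots,x_{n+1},y_{n+1}$ to obtain a bijection with down-up permutations of $[2n+2]$, then invoke the classical (Andr\'e) enumeration of even-length alternating permutations by the secant numbers. Your write-up merely makes explicit what the paper leaves to inspection --- the verification that $\{A_1,\dots,A_5\}$ is exactly the set of pieces satisfying the three local relations, and the fence/linear-extension framing --- so no substantive difference remains.
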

            
            
   
            
         \begin{proof}

             Firstly, for the 1-basic skeleton B(a, b, c, d) as shown in figure \ref{fig:B}. We check the i-simple piece generated by B and find it is $\{A_1, A_2, A_3, A_4, A_5\}$ exactly.

             
             Additionally, in 1879 D\'{e}sir\'{e} Andr\'{e} \cite{sec} proved that 
             the secant numbers is the number of down-up permutations of $[2n]$. Therefore, the aim is to find a one-to-one correspondence between this puzzle sequence and a down-up permutation of $[2n+2]$.

             On the one hand, we read the numbers of the puzzle from the top left to the bottom right, through south $\rightarrow$ northeast $\rightarrow$ south $\rightarrow$ northeast $\ldots$ Therefore, by definition we get a permutation $\mathcal{P}$ of length $2n+2$, which satisfies $a(i)>a(i+1)$, where $i = 2k+1$ , and $a(i)<a(i+1)$, where $i = 2k$ ($k\in\{0, 1, 2,\ldots, n\}$, a(i) denotes the i-th number in $\mathcal{P}$, and a(0) is defined by 0). So, it's a down-up permutation.
         
             On the other hand, given a down-up permutation, we can place the permutation at the vertices of the puzzle in the way shown above. Obviously, this puzzle is a simple puzzle of $\{A_1, A_2, A_3, A_4, A_5\}$.

             Hence, the proof is completed.
         \end{proof}

     \subsection{Double factorial of odd and even numbers}
         In this subsection we are concerned with 1-basic skeletons as shown in figure \ref{BK:AB} and \ref{BK:ABD}. After checking the simple pieces generated by them, we find that they are $\{A_1, A_2, A_3\}$ and $\{A_1, A_2\}$ respectively.\label{Se:3.3}
         \begin{figure}[h]
             \centering
             \subfigure[the basic skeleton of $\{A_1, A_2, A_3\}$]
             {
                 \begin{minipage}{5cm}
                 \centering          
                 \includegraphics[scale=0.4]{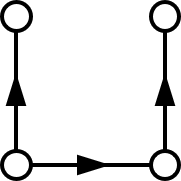}
                 \label{BK:AB}
                 \end{minipage}
             } 
             \subfigure[the basic skeleton of $\{A_1, A_2\}$] 
             {
                 \begin{minipage}{5cm}
                 \centering      
                 \includegraphics[scale=0.4]{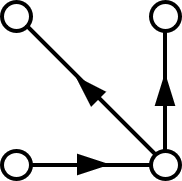}
                 \label{BK:ABD}        
                 \end{minipage}
             }   
             \caption{The basic skeletons}
             \end{figure}
             
             Then we have
     \begin{theorem}

             Let $\mathcal{P}=\{A_1, A_2, A_3\}$ and $\mathcal{Q}=\{A_1, A_2\}$
             \begin{equation}
                s_n(\mathcal{P})=(2n+1)!! \label{a}
             \end{equation}
             \begin{equation}
                 s_n(\mathcal{Q})=(2n)!! \label{b}
             \end{equation}\label{theorem}
     \end{theorem}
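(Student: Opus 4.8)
The plan is to translate the support conditions into inequalities on the two-row array and then run a deletion--reconstruction recursion on the number of columns. Write a standard $n$-puzzle as a list of columns $(t_i,b_i)_{0\le i\le n}$ (top and bottom labels) carrying a permutation of $[2n+2]$, and set $p_n:=s_n(\mathcal P)$, $q_n:=s_n(\mathcal Q)$. First I would record the order type of the admissible pieces: $A_1,A_2,A_3$ all satisfy $b_i<b_{i+1}$ and $t_i>b_i$, and they are distinguished by the location of $t_i$, namely $A_1$ and $A_2$ obey $t_i>b_{i+1}$ whereas $A_3$ is exactly the case $t_i<b_{i+1}$. The key lemma is the converse direction: if every column has $t_i>b_i$ and the bottom row is strictly increasing $b_0<b_1<\dots<b_n$, then $b_i$ is forced to be the least of the four labels of each piece, so the only possible reductions are $A_1,A_2,A_3$. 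Consequently $S_n(\mathcal P)$ is precisely the set of arrays with $b_0<\dots<b_n$ and $t_i>b_i$, while $S_n(\mathcal Q)$ is the further restriction adding $t_i>b_{i+1}$ for $0\le i<n$ (this single inequality is exactly what forbids $A_3$).

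Next I would delete the column containing the label $1$. Since each column has top greater than bottom, the global minimum $1$ must be a bottom, and because the bottom row increases it can only be $b_0$; its column is therefore $(t_0,1)$. Deleting this column leaves $c_1,\dots,c_n$, whose bottoms remain increasing and which inherit every surviving inequality, so after an order-preserving relabeling of the remaining $2n$ labels to $[2n]$ one gets a valid configuration for $n-1$. Conversely, from a valid $(n-1)$-configuration I reattach a first column $(t_0,1)$ and count the admissible partners $t_0\in\{2,\dots,2n+2\}$. For $\mathcal P$ there is no cross-column condition, so all $2n+1$ values are allowed, giving
\[
 p_n=(2n+1)\,p_{n-1}.
\]
For $\mathcal Q$ the extra condition $t_0>b_1$ must hold, where $b_1$ is the least label of the reattached sub-configuration; since that least label equals $2$ precisely when $t_0\ne 2$, the choice $t_0=2$ fails and every one of the $2n$ larger values succeeds, giving $q_n=2n\,q_{n-1}$. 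Solving these from the base cases $p_1=3$ and $q_1=2$ yields $p_n=(2n+1)!!$ and $q_n=(2n)!!$, which are \eqref{a} and \eqref{b}.

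The step I expect to be the main obstacle is turning this deletion--reconstruction into a genuine bijection rather than a heuristic. I must check that deleting column $0$ never destroys a remaining inequality (this is immediate, since each surviving constraint already held in the larger puzzle) and, more delicately for $\mathcal Q$, that reinstating the column introduces exactly the one cross-column inequality $t_0>b_1$ and nothing further, so that the number of valid partners is precisely $2n$ with no double counting. The only arithmetic subtlety is that for $\mathcal Q$ the forbidden partner is always the single value $2$; once that is isolated, both formulas emerge from the same argument, the sole difference being whether the label $2$ is permitted to sit directly above the label $1$. As an independent sanity check for $\mathcal P$, I would note that the arrays counted there are in bijection with perfect matchings of $[2n+2]$ (take each column as a pair with the larger label on top, and order the columns by increasing bottom), which gives $(2n+1)!!$ directly and confirms the recursion.
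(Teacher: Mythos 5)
Your proposal is correct and coincides in essence with the paper's first proof (credited to Shishuo Fu): your key lemma --- bottom row increasing with each top exceeding its bottom characterizes $S_n(\{A_1,A_2,A_3\})$, and the single extra inequality $t_i>b_{i+1}$ cuts this down to $S_n(\{A_1,A_2\})$ --- is exactly the paper's observation that every piece carries the minimum in its lower-left corner, so the bottoms are the successive minima and only the tops are free. Your deletion--reconstruction recursions $p_n=(2n+1)p_{n-1}$ and $q_n=2n\,q_{n-1}$ (with the lone forbidden partner $t_0=2$ in the second case) are just the recursive repackaging of the paper's greedy left-to-right count $(2n+1)(2n-1)\cdots 3\cdot 1$ and $2n(2n-2)\cdots 2$; the perfect-matching sanity check is a pleasant extra, but the argument is the same.
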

         In this paper, we present two proofs of this theorem, one of which is provided by our academic advisor. We are very grateful to Shishuo Fu for this excellent proof, which are as follows. \label{AB}
         \subsubsection{The first method}
             \begin{proof}
                 To begin with, we observed that each piece of $S_n(\mathcal{P})$ and $S_n(\mathcal{Q})$ meets the minimum number in its lower left corner. On this basis, the proof of this theorem is constructed.

                 For equation \ref{a}, we put the smallest number one in $[2n+2]$ on the lower left of the puzzle, and then at the top of the number one, we have $2n+1$ choices from 2 to $2n+2$. After selecting these two numbers in the leftmost column, the second column performs the same action: place the smallest of the remaining numbers at the bottom of the second column and then select one of the remaining numbers and place it on the top of the column, with $(2n-1)$ options. A clear induction gives $(2n+1)(2n-1)\cdot\ldots\cdot 3\cdot 1=(2n+1)!!$ permutations that meet the above conditions.

                 Now, all we need to do is prove that the support of puzzles which meet the above properties is accurately $\{A_1, A_2, A_3\}$. Check all the standard pieces, and pieces satisfying the above conditions are only $A_1, A_2$ and $A_3$.
                 
                 The equation \ref{a} has been shown and we can prove the equation \ref{b} in a similar way. Notice that each piece of $S_n(\mathcal{Q})$ satisfies that the upper-left number is larger than the lower-right number, in addition to the above conditions. Therefore, after setting the number one in the lower left corner, the number above the number one has only $2n$ conditions from 3 to $2n+2$. Then the second column list $(2n-2)$ possibilities. By recursion, the number of the element in $S_n(\mathcal{Q})$ is $(2n)!!$, which completes the proof of the Theorem \ref{theorem}.
             \end{proof}
         \subsubsection{The second method}
             In this subsection, we provide an alternative method for proving the Theorem \ref{theorem}.
             \begin{proof}
                 Proceed by induction on length of $S_n(\mathcal{P})$. If $n = 1$, then this puzzle is $A_1$ or $A_2$ or $A_3$. So, 
                 $s_1(\mathcal{P})=3=(2\cdot 1+1)!!$. 
                 This establishes the base case of the induction.
                 Suppose the claim holds for puzzles of length less than $k$. Then we construct a 1-to-(2n+1) map from $S_{n-1}(\mathcal{P})$ to $S_n(\mathcal{P})$.

                 For any puzzle of $S_{n-1}(\mathcal{P})$, there are $n$ numbers on the top line. We choose one of them and there are $n$ different options. Then let's put it on the second line such that it is bigger than the number to its left, and smaller than the number to its right. In other words, rearrange the columns according to the size of the rows below. After the operation, we can see two empty seats in the first row. One of them is the original position of the number taken out, and the other is the empty seat above its new position. After this, we put the two number $2n+1$ and $2n+2$ in these two space in two ways. Additionally, we can put the number $2n+1$ and $2n+2$ to the bottom right and top right corners of this puzzle respectively and then we can get another puzzle in $S_n(\mathcal{P})$. Besides, this construction is easily seen to be invertible and then the proof is completed.


             \end{proof}

                 To prove the equation \ref{b}, we need to use the skeleton model in the next section, so the proof of it is postponed until Section \ref{section: 4.1}.
     \subsection{The standard puzzle sequence of $\{A_1, A_2, A_4, A_5\}$}
     
     \begin{theorem}Let $\mathcal{P}=\{A_1, A_2, A_4, A_5\}$. Then we have, 
         \begin{equation}
            s_n(\mathcal{P})=L(n+1)
         \end{equation}
         where L(n) is number of lattice paths from $\{2\}^n$ to $\{0\}^n $ using steps that decrement one component by 1 such that for each point $(p_1,p_2, \ldots ,p_n)$ we have $\bigl|p_{i}-p_{i+1}\bigr| \leqslant 1$, which is in OEIS, entry A227656.
     \end{theorem}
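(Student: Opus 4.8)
The plan is to reinterpret the support condition as an order constraint on the $2\times(n+1)$ array and then match it bijectively with the lattice paths counted by A227656. First I would pin down what $\mathcal{P}=\{A_1,A_2,A_4,A_5\}$ demands of a piece. All four belong to category $A$, and inspecting Table~\ref{1} shows they are precisely the category-$A$ pieces in which \emph{both} top labels exceed \emph{both} bottom labels; the two omitted pieces $A_3$ and $A_6$ are exactly the category-$A$ pieces that violate this (in $A_3$ the top-left label lies below the bottom-right, in $A_6$ the bottom-left lies above the top-right). Writing the array with columns $(x_i,y_i)$ for $1\le i\le n+1$, top entry $x_i$ and bottom entry $y_i$, membership in $S_n(\mathcal{P})$ is therefore equivalent to $x_i>y_i$ for every $i$ together with the cross-inequalities $x_i>y_{i+1}$ and $x_{i+1}>y_i$ for every adjacent pair of columns.

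Second, I would build the bijection. For a filling, let $p_i$ be the number of cells of column $i$ not yet occupied when the labels are placed in decreasing order $2n+2,2n+1,\dots,1$. Then $p=(p_1,\dots,p_{n+1})$ starts at $(2,\dots,2)$, ends at $(0,\dots,0)$, and loses exactly one unit at each of the $2(n+1)$ placements, so it is a lattice path of the A227656 shape on $n+1$ components. Assigning each label $m$ the time $2n+3-m$, the component $p_i$ drops from $2$ to $1$ at time $\alpha_i=2n+3-x_i$ and from $1$ to $0$ at time $\beta_i=2n+3-y_i$, and since $x_i>y_i$ forces the larger label of a column into its top cell we always have $\alpha_i<\beta_i$. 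The correspondence $m\mapsto 2n+3-m$ turns any assignment of a pair of times to each column (with $\alpha_i<\beta_i$) into a valid filling with $x_i>y_i$, so fillings and time-assignments match perfectly before imposing any further constraint.

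The crux is to show the path constraint $|p_i-p_{i+1}|\le 1$, required at \emph{every} intermediate point, coincides with the cross-inequalities. A violation forces a gap of $2$ between neighbouring components, i.e. one column momentarily full while its neighbour is already empty. The configuration $p_i=2,\ p_{i+1}=0$ never occurs iff $\alpha_i<\beta_{i+1}$, which under $m\mapsto 2n+3-m$ is exactly $x_i>y_{i+1}$; symmetrically $p_{i+1}=2,\ p_i=0$ is excluded iff $x_{i+1}>y_i$. Hence valid A227656 paths on $n+1$ components are in bijection with the puzzles of $S_n(\mathcal{P})$, giving $s_n(\mathcal{P})=L(n+1)$.

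The step I expect to be the main obstacle is the careful two-sided verification of this last equivalence at all intermediate times, not merely at the endpoints: one must argue that a breach of $|p_i-p_{i+1}|\le 1$ can only appear as a $2$-gap, that each of the two gap types is blocked by precisely one cross-inequality, and that the time-assignment always yields a legitimate decreasing path. I would close with the bookkeeping check that an $n$-puzzle has $n+1$ columns, hence $n+1$ path components and the index shift $L(n+1)$, and verify the base case $n=1$, where the four pieces $A_1,A_2,A_4,A_5$ give $s_1(\mathcal{P})=4=L(2)$.
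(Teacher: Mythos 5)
Your proposal is correct and is essentially the paper's own argument: the paper likewise builds the bijection by scanning the labels in order (it uses increasing order $1,\dots,2n+2$, decrementing the coordinate of the column containing each label, which is just the time-reversal of your decreasing-order scan) and identifies the constraint $\lvert p_i-p_{i+1}\rvert\le 1$ with the condition that every top entry exceeds the bottom entries of its own and adjacent columns, i.e.\ with the support $\{A_1,A_2,A_4,A_5\}$. The only difference is expository: where the paper asserts this equivalence in one sentence, you verify it carefully (a breach can only be a $0$--$2$ gap, and each gap type is blocked by exactly one cross-inequality), which tightens the step the paper glosses over.
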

     \begin{proof}
         First of all, we draw the skeleton of $\{A_1, A_2, A_4, A_5\}$.

         \begin{figure}[h]
            \centering
            \includegraphics[scale=0.4]{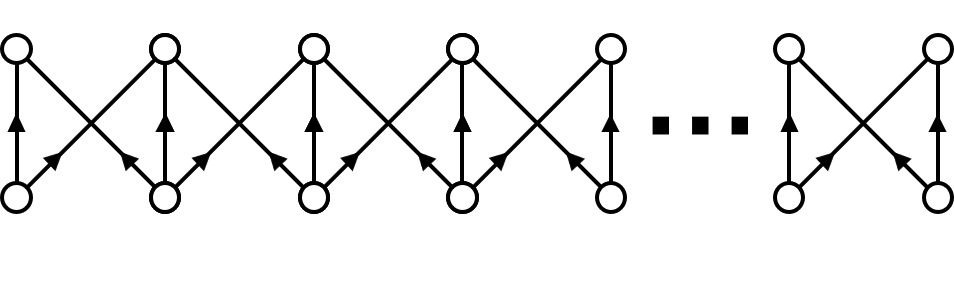}
            \caption{The skeleton of $\{A_1, A_2, A_4, A_5\}$}
            \label{fig:ABGH}
         \end{figure}

         The trick of the proof is to find a bijection between any element of $S_n(\mathcal{P})$ and the path meeting the above conditions of length $n+1$.

         Then, we construct a combinatorial proof of bijection. Check the positions of the numbers in the standard puzzle, from 1 to $2n+2$ one by one. If the number is in column i, we will change $(p_1, p_2, \dots , p_i, \dots, p_{n+1})$ into $(p_1, p_2, \dots , p_i-1, \dots, p_{n+1})$. And the initial point is $\{2\}^n$. This construction is easily seen to be invertible. Then we only need to show this kind of puzzle is exactly $S_n(\mathcal{P})$.

         Notice that the j-th step of the path is equal to a n-set which count the number greater than j in each column of an element of $S_n(\mathcal{P})$. Then that each point $(p_1,p_2, \dots ,p_n)$ satisfies $\bigl|p_{i}-p_{i+1}\bigr| \leqslant 1$ is the same thing as it that the puzzle satisfies that each number on the top row of the puzzle is greater than the numbers below it and below it's adjacent numbers. Therefore, it is equivalent to it that $\{A_1, A_2, A_4, A_5\}$ is a support of the puzzle by the figure \ref{fig:ABGH}
     \end{proof}
     \subsection{Fibonacci sequence}
     In this subsection, we enumerate the standard puzzle sequence of $\{A_1, B_1, C_1\}$ which is a Fibonacci sequence. Although this puzzle is not one of 1-simple puzzles, we list it here for theorem \ref{theorem:P=AP} to be complete.

     Before we state and prove the theorem, we will need a lemma.
         \begin{lemma}
             The number of subsets of $S(n)=\{1, 2, \dots, n\}$ that contain no consecutive integers is equal to F(n+2), where F(n) is the n-th Fibonacci number for F(0)=0, F(1)=1.\label{lemma1}
         \end{lemma}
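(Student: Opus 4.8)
The plan is to prove the classical Fibonacci identity $\#\{S \subseteq \{1, \dots, n\} : S \text{ contains no two consecutive integers}\} = F(n+2)$ by setting up a recurrence on $n$ and matching it to the Fibonacci recurrence, then checking base cases. First I would fix notation: let $g(n)$ denote the number of subsets of $\{1, 2, \dots, n\}$ containing no two consecutive integers, so the goal is to show $g(n) = F(n+2)$ for all $n \geq 0$, where $g(0) = 1$ counts the empty subset of the empty set.

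The core of the argument is a case split on whether the largest element $n$ belongs to the subset. I would partition the valid subsets of $\{1, \dots, n\}$ into two classes. If a valid subset does not contain $n$, then it is exactly a valid subset of $\{1, \dots, n-1\}$, and there are $g(n-1)$ of these. If a valid subset does contain $n$, then it cannot contain $n-1$ (that would be a consecutive pair), so removing $n$ leaves a valid subset of $\{1, \dots, n-2\}$; this correspondence is a bijection, giving $g(n-2)$ such subsets. Since these two classes are disjoint and exhaustive, we obtain the recurrence
\begin{equation*}
    g(n) = g(n-1) + g(n-2), \qquad n \geq 2.
\end{equation*}

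Next I would pin down the base cases to launch the induction. Directly, $g(0) = 1$ (only $\emptyset$) and $g(1) = 2$ (namely $\emptyset$ and $\{1\}$). Recalling the convention $F(0) = 0$, $F(1) = 1$, the Fibonacci numbers give $F(2) = 1$ and $F(3) = 2$, so $g(0) = F(2)$ and $g(1) = F(3)$, confirming the claimed identity for $n = 0, 1$. Since both $g(n)$ and $F(n+2)$ satisfy the same two-term recurrence and agree at the two consecutive base values, a routine induction on $n$ forces $g(n) = F(n+2)$ for every $n$, completing the proof.

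There is no serious obstacle here, as this is a standard textbook result; the only point requiring a little care is the bijection in the ``contains $n$'' case, where one must verify that the map $S \mapsto S \setminus \{n\}$ lands in valid subsets of $\{1, \dots, n-2\}$ (not merely $\{1, \dots, n-1\}$) and is invertible by $T \mapsto T \cup \{n\}$. Establishing that both maps preserve the no-consecutive-integers property is the crux, and it follows immediately from the observation that $n-1 \notin S$ whenever $n \in S$.
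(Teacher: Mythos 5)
Your proposal is correct and follows essentially the same route as the paper: both proofs split the valid subsets according to whether the largest element belongs to the subset, derive the Fibonacci recurrence from that disjoint decomposition, and anchor it with two initial values (you use $n=0,1$ where the paper checks $n=1,2$, an immaterial difference). Your explicit verification that $S \mapsto S \setminus \{n\}$ is a bijection onto valid subsets of $\{1,\dots,n-2\}$ is slightly more careful than the paper's phrasing, but the argument is the same.
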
 
         \begin{proof}
             We denote by $s(n)$ the number in this lemma. And supposing the length of $S(n)$ is 1 and 2, we count the number of its subset satisfying the above properties, which is accurately equal to F(3) and F(4). Then we only need to show that $s(n+2) = s(n+1) + s(n)$, for all positive integers n.

             We enumerate the number of subsets of $S(n+2)$ meeting the conditions. Discuss it in two ways whether the number $n+2$ is in the subset or not. Suppose that the number $n+2$ is not in the subset, and then this subset is made up of $\{1, 2, \dots, n+1\}$, that is, it is a subset of $S(n+1)$. 
        
             Then we suppose the number $n+2$ is in the subset. According to the definition of the subset, the number $n+1$ is not in this subset, and other numbers are all made up of $\{1, 2, \dots, n\}$. This means the set is a union of a subset of S(n) and $\{n+2\}$.

             Combining the above two points, the proof of Lemma \ref{lemma1} is completed.
         \end{proof}
         \begin{theorem}
             Let $\mathcal{P}=\{A_1, B_1, C_1\}$. Then the standard puzzle sequence of $\mathcal{P}$ is equal to the Fibonacci sequence.
             
             That is,
             \begin{equation}
                 s_n(\mathcal{P})=F(n+3)
             \end{equation}
             where F(n) is the n-th Fibonacci number for F(0)=0, F(1)=1.
             \label{fibonacci}
         \end{theorem}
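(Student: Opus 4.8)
The plan is to exhibit a bijection between $S_n(\mathcal{P})$ and the binary words of length $n+1$ that contain no two consecutive occurrences of one distinguished letter, and then to count those words with Lemma \ref{lemma1}. Write a candidate puzzle as $\begin{bmatrix} t_1 & \cdots & t_{n+1} \\ b_1 & \cdots & b_{n+1}\end{bmatrix}$ and record, for each column $i$, whether its top entry is the larger of the two entries (call this type $u$, ``up'') or the smaller one (type $d$, ``down''); this produces a word $w(\pi)\in\{u,d\}^{n+1}$. First I would read off the order relations forced by the three admissible pieces. Setting $M_i=\max(t_i,b_i)$ and $m_i=\min(t_i,b_i)$, a direct inspection of $A_1$, $B_1$ and $C_1$ shows that all three impose \emph{the same} order on the two columns they span, namely $M_i>M_{i+1}>m_{i+1}>m_i$ for the pair of columns $(i,i+1)$; the pieces differ only in whether each maximum sits in the top or the bottom row, that is, only in the column types.

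Chaining this relation across all $n$ pieces yields a single totally ordered chain $M_1>M_2>\cdots>M_{n+1}>m_{n+1}>\cdots>m_1$ on the whole label set $\{1,\dots,2n+2\}$. Consequently the values are completely forced: $M_i=2n+3-i$ and $m_i=i$ for every $i$, and the only remaining freedom is the placement of each $M_i$ in the top or in the bottom row, which is precisely the data stored in $w(\pi)$. Hence the map $\pi\mapsto w(\pi)$ is injective, and any puzzle of $S_n(\mathcal{P})$ is fully reconstructible from its column-type word.

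Next I would determine which words actually occur. For adjacent columns the forced order $M_i>M_{i+1}>m_{i+1}>m_i$, combined with the chosen types, reproduces $A_1$ for $uu$, $B_1$ for $ud$ and $C_1$ for $du$, whereas the type $dd$ produces the piece $\begin{bmatrix}\begin{smallmatrix}1 & 2\\4 & 3\end{smallmatrix}\end{bmatrix}=D_1\notin\mathcal{P}$. Therefore $w(\pi)$ must avoid two consecutive $d$'s, and conversely every such word, filled in by the forced values $M_i=2n+3-i$, $m_i=i$, gives a genuine element of $S_n(\mathcal{P})$, since each of its pieces then reduces into $\mathcal{P}$. This makes $\pi\mapsto w(\pi)$ a bijection onto the $d$-avoiding words of length $n+1$. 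Finally, recording the positions carrying a $d$ identifies these words with the subsets of $\{1,\dots,n+1\}$ that contain no two consecutive integers, so Lemma \ref{lemma1} (applied with $n+1$ in place of $n$) gives their number as $F\bigl((n+1)+2\bigr)=F(n+3)$, which is the claim.

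The only real obstacle is the second paragraph: one must verify that the three pieces enforce the identical max/min order, so that the partial order induced on the labels collapses to a single chain rather than leaving several linear extensions. Once that uniform pattern is observed, both the uniqueness of the labeling (one puzzle per word) and the $dd$-avoidance condition follow immediately, and the count reduces cleanly to the already-proved Fibonacci lemma.
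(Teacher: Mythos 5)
Your proof is correct and is essentially the paper's own argument in different notation: your column-type word $w(\pi)\in\{u,d\}^{n+1}$ records exactly the set $S$ of flipped columns in the paper's flip map $\phi_S$ applied to the unique puzzle with support $\{A_1\}$, your $dd$-avoidance condition is precisely the paper's requirement that $S$ contain no adjacent elements (both arising because flipping two adjacent columns creates the forbidden piece $D_1$), and both arguments conclude by invoking Lemma \ref{lemma1} on subsets of $\{1,\dots,n+1\}$. The one place you go beyond the paper is your explicit chain argument $M_1>\cdots>M_{n+1}>m_{n+1}>\cdots>m_1$ forcing $M_i=2n+3-i$ and $m_i=i$, which spells out the rigidity underlying the surjectivity (``this mapping is easily seen to be invertible'') that the paper only asserts.
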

         \begin{proof}


             We think of a mapping similar to the mapping in the Theorem 3 of \cite{Han}, called the flip map. Let S be a subset of $\{1, 2, . . . , n+1\}$. The flip map $\phi_S : \alpha \mapsto \beta $ is a transformation which maps a puzzle
             $\alpha = \bigl[\begin{smallmatrix}x_1x_2\dots x_{n+1}\\
             y_1y_2\dots y_{n+1}\end{smallmatrix}\bigr]$ onto $
             \beta = \bigl[\begin{smallmatrix}a_1a_2\dots a_{n+1}\\
             b_1b_2\dots b_{n+1}\end{smallmatrix}\bigr] $
             such that $a_i = x_i$, $b_i = y_i$ for $i \notin S$ and
             $a_i = y_i$, $b_i = x_i$ for $i \in S$. 

             Notice that a map $\phi_S $ can convert a piece $A_1$ into $B_1$ or $C_1$ or $D_1$, for any S. However, the support of the image of a puzzle of $S_n(A_1)$ under the maping $\phi_S $ is $S_n(\mathcal{P})$ if and only if there are no adjacent elements in S. This is because if suppose S has adjacent elements $i$ and $i+1$, then the reduction of the piece $\bigl[\begin{smallmatrix}
                 x_ix_{i+1}\\
                 y_iy_{i+1}
             \end{smallmatrix}\bigr]$
             is exactly $D_1$ which is not in $\{A_1, B_1, C_1\}$. And vice versa.
             
             So, the number $s_n(\mathcal{P})$ is equivalent to the number of subsets of $\{1, 2, . . . , n+1\}$ such that there is no adjacent number in it. Notice that the number $s_1(A_1)$ is exactly one, and according to the Lemma \ref{lemma1}, this theorem has been proved.
         \end{proof}
 \section{Standard puzzles enumerated by the skeleton model}
     \subsection{Standard puzzles relating to double factorials}
         In this subsection, we dicuss about a kind of special standard puzzles such that $\{A_1, A_2, A_3\}$ or $\{A_1, A_2\}$ uniting 1-converters (resp. 2-converters) forms its support. Notice that any two 1-converters (resp. 2-converters) can't appear at the same time in this kind of puzzles. We can simply enumerate the standard puzzle sequence of the union of $\{A_1, A_2, A_3\}$ or $\{A_1, A_2\}$ and a 1-converter (resp. 2-converter). And we can sum them and subtract out the double counting to get all this kind of puzzles.

         Before enumerating these kinds of puzzles, we need some theorems as follows. \label{section: 4.1}
         \begin{definition}
             Let $\mathcal{P}$ be a set of standard pieces and $S_n(\mathcal{P}_x)$(resp. $S_n(\mathcal{P}^x)$) denotes the set of all standard n-puzzles, each of which has $\mathcal{P}$ as its support and the number $x$ is in the bottom right corner (resp. top right corner). And let $s_n(\mathcal{P}_x):=\bigl|S_n(\mathcal{P}_x)\bigr|$ (resp. $s_n(\mathcal{P}^x):=\bigl|S_n(\mathcal{P}^x)\bigr|$) be the cardinality of $S_n(\mathcal{P}_x)$ (resp. $S_n(\mathcal{P}^x)$).  
         \end{definition} 

         \begin{theorem}
             Let $\mathcal{P}$ be $\{A_1, A_2, A_3\}$, then we have,
             \begin{equation}
                s_n(\mathcal{P}_{2n-k+2})=T(n,k) \label{A123}
             \end{equation}
             where T(n,k) is the sum of the weights of all vertices labeled k at depth n in the Catalan tree $(1 \leqslant k \leqslant n+1, n \geqslant 0)$, in OEIS \cite{OEIS} entry A102625.\label{A123_}
         \end{theorem}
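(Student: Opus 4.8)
The plan is to turn the refined count $s_n(\mathcal{P}_x)$ into a two-term recurrence by deleting the leftmost column, and then to match that recurrence with the one defining A102625. First I would recall the structural description of $S_n(\mathcal{P})$ already obtained in the first proof of Theorem \ref{theorem}: a standard $n$-puzzle $\bigl[\begin{smallmatrix}x_1\cdots x_{n+1}\\ y_1\cdots y_{n+1}\end{smallmatrix}\bigr]$ lies in $S_n(\mathcal{P})$ if and only if $x_i>y_i$ in every column and the bottom row is strictly increasing, $y_1<\cdots<y_{n+1}$ (this is exactly the condition that the minimum of each piece sits at its lower-left corner). In particular $y_1=1$, and the bottom-right entry $y_{n+1}$ is the largest bottom label, so it ranges over $\{n+1,\ldots,2n+1\}$; writing $x=2n+2-k$ gives $1\le k\le n+1$, matching the index range in the statement.

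Next comes the core step: a bijection $S_n(\mathcal{P}) \to S_{n-1}(\mathcal{P})\times\{2,\ldots,2n+2\}$ by first-column deletion. Given $P$, I record $v:=x_1$ and let $P'$ be the puzzle on columns $2,\ldots,n+1$ after relabelling its $2n$ labels $\{2,\ldots,2n+2\}\setminus\{v\}$ order-preservingly onto $[2n]$. Since $1$ is the global minimum and always occupies the lower-left corner, prepending any column $\bigl[\begin{smallmatrix}v\\1\end{smallmatrix}\bigr]$ to a relabelled element of $S_{n-1}(\mathcal{P})$ again satisfies ``top $>$ bottom in each column'' and ``bottom row increasing''; hence the map is a bijection in which every one of the $2n+1$ admissible values of $v$ occurs. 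This already reproves $s_n(\mathcal{P})=(2n+1)\,s_{n-1}(\mathcal{P})$.

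Then I would track the bottom-right label through this bijection. If $P$ has bottom-right $Y=2n+2-k$, then $v\ne Y$, and the bottom-right entry of $P'$ is the rank of $Y$ inside $\{2,\ldots,2n+2\}\setminus\{v\}$, namely $\rho_v(Y)=Y-1-[v<Y]$. Writing the bottom-right of $P'$ as $2(n-1)+2-j$, this yields $j=k-1$ when $v>Y$ (there are $2n+2-Y=k$ such values of $v$) and $j=k$ when $v<Y$ (there are $Y-2=2n-k$ such values). Summing over the $2n$ values $v\ne Y$ gives
\begin{equation*}
 s_n(\mathcal{P}_{2n+2-k}) = k\,s_{n-1}(\mathcal{P}_{2n-k+1}) + (2n-k)\,s_{n-1}(\mathcal{P}_{2n-k}),
\end{equation*}
that is, $T(n,k)=k\,T(n-1,k-1)+(2n-k)\,T(n-1,k)$, with out-of-range terms read as $0$.

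Finally I would close by induction: the base $n=1$ gives $s_1(\mathcal{P}_3)=1=T(1,1)$ and $s_1(\mathcal{P}_2)=2=T(1,2)$, and the displayed recurrence is precisely the one satisfied by the vertex-weights of the Catalan tree in A102625 (equivalently its cumulative form $T(n,k)=k\sum_{j\ge k-1}T(n-1,j)$), so the two arrays coincide for all $n$. I expect the main obstacle to be the bookkeeping in the label-tracking step, i.e. verifying that the rank shift $\rho_v(Y)$ splits the admissible $v$ into exactly $k$ and $2n-k$ classes so as to produce the coefficients $k$ and $2n-k$; a secondary point to nail down is that this two-term recurrence, together with the initial row, is genuinely the defining recurrence of A102625 rather than merely agreeing on the small rows I can check by hand.
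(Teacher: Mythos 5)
Your proposal is correct, but it runs the column deletion in the opposite direction from the paper and ends up with a structurally different recurrence. The paper deletes the \emph{rightmost} column: fixing bottom-right $2n+2-k$ and top-right $j$ with $2n+3-k\leq j\leq 2n+2$, deletion lands in $S_{n-1}(\mathcal{P}_{2n-i})$ for $k-1\leq i\leq n$, each target being hit $k$ times (the $k$ choices of $j$), which yields the cumulative recurrence $s_n(\mathcal{P}_{2n-k+2})=k\sum_{i=k-1}^{n}s_{n-1}(\mathcal{P}_{2n-i})$ — and this is \emph{literally} the recurrence the paper takes as defining $T(n,k)$ in A102625, so the identification is immediate after the same base cases you check. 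You delete the \emph{leftmost} column instead, which buys a cleaner bijective step: since $y_1=1$ always and the pieces $A_1,A_2,A_3$ place no constraint on $x_1$ relative to the second column beyond $x_1>1$, the map $P\mapsto(P',v)$ really is a bijection $S_n(\mathcal{P})\cong S_{n-1}(\mathcal{P})\times\{2,\ldots,2n+2\}$, and your rank bookkeeping $\rho_v(Y)=Y-1-[v<Y]$, with the split into $k$ values $v>Y$ and $2n-k$ values $v<Y$, is correct (including the boundary cases $k=1$ and $k=n+1$, where the out-of-range terms vanish for the right combinatorial reasons). The price is exactly the secondary point you flag: your two-term recurrence $T(n,k)=k\,T(n-1,k-1)+(2n-k)\,T(n-1,k)$ is \emph{not} the native recurrence of A102625 (the paper's cumulative one is), so you owe a verification. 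It closes in two lines using the closed form that the paper quotes immediately after the theorem:
\begin{equation*}
 k\,T(n-1,k-1)+(2n-k)\,T(n-1,k)
 =\frac{k\,(2n-k)!}{(n-k+1)!\,2^{n-k+1}}\bigl[(k-1)+2(n-k+1)\bigr]
 =\frac{k\,(2n-k+1)!}{(n-k+1)!\,2^{n-k+1}}=T(n,k),
\end{equation*}
with out-of-range terms zero; equivalently one can derive $(2n-k)T(n-1,k)=k\sum_{i=k}^{n}T(n-1,i)$ from the cumulative recurrence. Net--net: the paper's route matches the OEIS definition with no extra identity but needs the double counting over the top-right entry $j$; your route gives a genuine product bijection (reproving $s_n(\mathcal{P})=(2n+1)!!$ for free, as you note) and a bounded-depth recurrence, at the cost of one routine algebraic check.
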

         \begin{proof}
             By definition of $T(n,k)$, let $T(m, 0)=0$ for any positive integer $m$, and then we can know that $T(n,k)$ satisfies the following equation.
             \begin{equation}
                 T(n,k)=k\sum_{i=k-1}^nT(n-1,i)
             \end{equation}
             Then, let a standard puzzle of $S_n(\mathcal{P})$ with $2n+2-k$ and $j$ in the bottom and top right hand corner respectively, for $2n+3-k \leqslant j \leqslant 2n+2$. Then we get rid of the rightmost column of this puzzle, and replace all the numbers by $\{1, 2, \dots , 2n\}$ respecting the label ordering. Thus, we get a standard puzzle in $S_{n-1}(\mathcal{P}_{2n-i})$ for $k-1 \leqslant i \leqslant n$ by observing the skeleton of $\mathcal{P}$. Besides, each element of $S_{n-1}(\mathcal{P}_{2n-i})$ can be produced by $k$ different elements of $S_n(\mathcal{P}_{2n-k+2})$ for the number $j$ has $k$ choices. 
             
             That is
             \begin{equation}
                 s_n(\mathcal{P}_{2n-k+2})=
                 k\sum_{i=k-1}^ns_{n-1}(\mathcal{P}_{2n-i})
             \end{equation}
             Then the recurrence relation has been shown. So we only need to check the equation \ref{A123} for $n=1$. Because of $s_1(\mathcal{P}_{3})=1=T(1,1)$ and $s_1(\mathcal{P}_{2})=2=T(1,2)$, the proof of Theorem \ref{A123_} is completed.
         \end{proof}
         Then, we check it in OEIS \cite{OEIS}
         , and find the formula of it.
         \begin{equation}
             T(n,k) = \frac{k(2n-k+1)!}{(n-k+1)!\cdot 2^{n-k+1}} 
         \end{equation} 
         At this point, we can give second proof of Theorem \ref{theorem}.
        \paragraph{Proof of Theorem \ref{theorem}}
         \begin{proof}
             Let $\mathcal{P}$ and $\mathcal{Q}$ denote $\{A_1, A_2\}$ and $\{A_1, A_2, A_3\}$ respectively and now we draw the skeletons of $S_n(\mathcal{P})$ and $S_n(\mathcal{Q})$.

             \begin{figure}[h]
                 \centering
                 \includegraphics[scale=0.3]{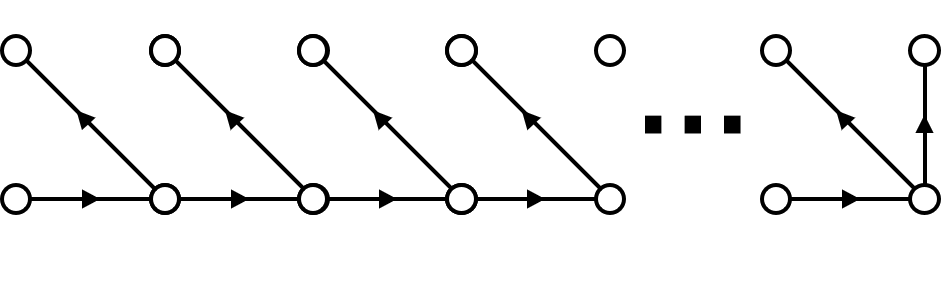}\\
                 \includegraphics[scale=0.3]{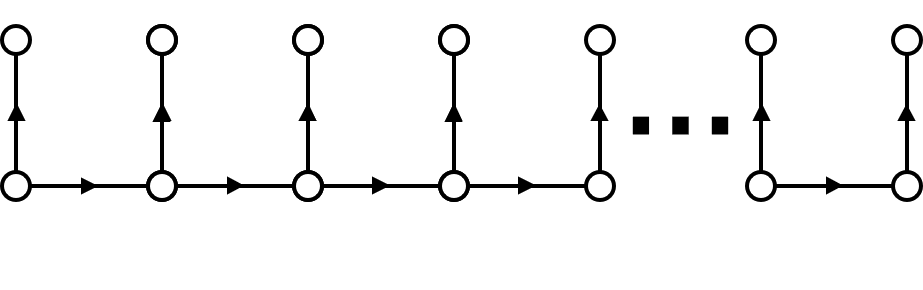}
                \caption{The skeletons of $S_n(\mathcal{P})$ and $S_n(\mathcal{Q})$}
                \label{fig:factorial}
            \end{figure}
             By contrast, we can find the skeletons of $S_n(\mathcal{P})$ and $S_n(\mathcal{Q})$ are very similar. Let us do a simple operation on $S_n(\mathcal{P})$: shift the top row of numbers to the right by one space. Then, we will find it incredible that $S_n(\mathcal{P})$ and $S_{n-1}(\mathcal{Q})$ are the same in the middle, except that $S_n(\mathcal{P})$ has two more leftmost and rightmost digits than $S_{n-1}(\mathcal{Q})$. The leftmost number of $S_n(\mathcal{P})$ is accurately number one, while the rightmost number is uncertain. Given a puzzle of $S_{n-1}(\mathcal{Q})$ we add one to each label of it, and put the number one at the lower left corner in the puzzle. And then, we consider the number at the lower right corner after changing denoted by $2n+1-k$ for $k \in\{1, 2, \dots, n\}$. Then, we choose a number $j \in\{2n+2-k, 2n+3-k, \dots, 2n+2\}$. We can know that there are $k+1$ kinds of way to choose the number for given $k$. Then, we put this number on the top right corner of the new puzzle and add one to each number no less than the number $j$ after the first change. Then shift the top row of numbers to the left by one space. A puzzle $S_n(\mathcal{P})$ has been constructed. Besides, this construction is easily seen to be invertible.

             Thus, we get the number $s_n(\mathcal{P})$, that is,
             \begin{equation}
                 s_n(\mathcal{P})=\sum_{k=1}^n(k+1)s_{n-1}(\mathcal{P}_{2n-k})\label{AB2}
             \end{equation}
             We substitute the equation \ref{A123} into the equation \ref{AB2}, and get the following equation
             \begin{equation}
                s_n(\mathcal{P})=\sum_{k=1}^n(k+1)T(n-1,k)\label{AB3}
             \end{equation}
             Notice that the right-hand side of the equation is a hypergeometric series. The study of the hypergeometric identities \cite{Fine,Gasper,AeqB} has been going on for decades, and now ``the discoveries and the proofs of hypergeometric identities have been very largely automated'' \cite{AeqB}. We simplified this hypergeometric series by computer and got that
             \begin{equation}
                 \sum_{k=1}^n(k+1)T(n-1,k)=(2n)!!\label{AB4}
             \end{equation}
             By the equation \ref{AB3} and \ref{AB4}, the proof of Theorem \ref{theorem} is completed.
         \end{proof}
         Now, we enumerate the standard puzzle sequence of the union of $\{A_1, A_2, A_3\}$ or $\{A_1, A_2\}$ and  $B_i$ (resp. $C_i$).
         \begin{theorem}Let $\mathcal{P} = \{A_1, A_2, A_3\}$.
            \begin{align}
                &s_n(\mathcal{P}\cup B_1)=s_n(\mathcal{P}\cup B_2)=s_n(\mathcal{P}\cup B_3)=\frac43(2n+1)!!\label{1B123} \\
                &s_n(\mathcal{P}\cup B_4)=s_n(\mathcal{P}\cup B_5)=2^n(n+1)!\label{1B45}\\
                &s_n(\mathcal{P}\cup B_6)=(n+3)(2n+1)!!-(2n+2)!!\label{1B6}
             \end{align}
             \label{4.2}
         \end{theorem}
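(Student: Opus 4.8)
\emph{Locating the converter.} The plan is to first decide where the piece $B_i$ can occur and then peel off the rightmost column to reduce everything to the already-understood family $S_{n-1}(\mathcal{P})$. Since each of $A_1,A_2,A_3$ has a descending left column (top larger than bottom) while every $B_i$ has an ascending right column, no piece of the support can be glued to the right of a $B_i$; hence a $B_i$, if it appears at all, must be the rightmost piece, and it can appear at most once. Consequently $S_n(\mathcal{P}\cup B_i)$ splits into the all-$A$ puzzles $S_n(\mathcal{P})$ together with the set $T^{(i)}_n$ of puzzles whose first $n-1$ pieces reduce into $\mathcal{P}$ and whose last piece reduces to $B_i$, so that $s_n(\mathcal{P}\cup B_i)=(2n+1)!!+N_i$ with $N_i=\bigl|T^{(i)}_n\bigr|$.

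\emph{Reduction to $S_{n-1}(\mathcal{P})$.} Deleting the rightmost column of a puzzle in $T^{(i)}_n$ and standardizing the remaining labels yields some $\beta\in S_{n-1}(\mathcal{P})$; conversely one rebuilds the puzzle by inserting the two missing labels as a new right column. Writing $v$ and $u$ for the bottom-right and top-right entries of $\beta$, the condition that the last piece reduce to a prescribed $B_i$ is exactly a prescription of the regions --- below $v$, strictly between $v$ and $u$, or above $u$ --- into which the two inserted values fall, the smaller landing on top (every $B_i$ has an ascending right column). The six pieces $B_1,\dots,B_6$ correspond precisely to the six choices of (region of top value, region of bottom value) with the top region no higher than the bottom region, so each $N_i$ becomes a sum over $\beta$ of an insertion coefficient that is binomial in the relevant region sizes.

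\emph{The clean identities.} Three facts follow bijectively, with no computation. Swapping the two entries of the rightmost column converts an ascending right column into a descending one, giving a bijection $T^{(1)}_n\cup T^{(2)}_n\cup T^{(3)}_n \leftrightarrow S_n(\mathcal{P})$, whence $N_1+N_2+N_3=(2n+1)!!$. Letting the two inserted values occupy any two ranks counts, summed over all $\beta$, to $\binom{2n+2}{2}(2n-1)!!=(n+1)(2n+1)!!$, so $\sum_{i=1}^{6}N_i=(n+1)(2n+1)!!$. The internal equalities $N_1=N_2=N_3$ and $N_4=N_5$ come from local involutions that exchange two vertex values in the last two columns (the shared top vertex with the new top or bottom vertex); the one point to verify is that the modified left-hand neighbour stays in $\{A_1,A_2,A_3\}$, which holds because the bottom row of any $\mathcal{P}$-puzzle is strictly increasing, so the bottom-left entry of piece $n-1$ is smaller than $v$.

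\emph{The surviving sum and the obstacle.} After these reductions a single genuine summation remains, namely $N_6=\sum_{\beta\in S_{n-1}(\mathcal{P})}\binom{v+1}{2}$, which by Theorem \ref{A123_} equals $\sum_{k=1}^{n}T(n-1,k)\binom{2n-k+1}{2}$ with the explicit value of $T(n-1,k)$ substituted; this finite hypergeometric sum evaluates, exactly as in the double-factorial proof, to $(n+2)(2n+1)!!-(2n+2)!!$. Everything else is then algebra: $N_1=N_2=N_3=\tfrac13(2n+1)!!$ yields $\tfrac43(2n+1)!!$, while $N_4=N_5=\tfrac12\bigl(n(2n+1)!!-N_6\bigr)$ collapses to give $s_n(\mathcal{P}\cup B_4)=s_n(\mathcal{P}\cup B_5)=2^n(n+1)!$, and adding $(2n+1)!!$ to $N_6$ gives the stated formula for $B_6$. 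I expect the main obstacle to be precisely this closed-form evaluation of the hypergeometric sum, together with the careful bookkeeping needed to confirm that the support-preserving involutions act as claimed.
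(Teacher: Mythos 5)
Your proposal is correct, and its skeleton coincides with the paper's proof of Theorem \ref{4.2}: the same observation that a converter $B_i$ can occur only once and only as the rightmost piece (so $s_n(\mathcal{P}\cup B_i)=(2n+1)!!+N_i$), the same last-column flip handling equation \ref{1B123} (the paper flips $B_1,B_2,B_3$ endings into $A_1,A_2,A_3$ endings and then argues each third is equinumerous, just as you do with your value-exchanging involutions), the same top-left/bottom-right swap giving $N_4=N_5$, and the same reliance on the refined statistic $T(n-1,k)=s_{n-1}(\mathcal{P}_{2n-k})$ of Theorem \ref{A123_} followed by a computer-assisted closed-form evaluation of a hypergeometric sum, which is exactly the paper's own standard of rigor at that step. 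Where you genuinely diverge is in the treatment of equation \ref{1B45}: the paper reads a second weighted sum off a skeleton diagram (equation \ref{TB45}) and simplifies it by computer independently of the $B_6$ sum, whereas you evaluate only the $B_6$ sum $N_6=\sum_{\beta}\binom{v+1}{2}$ and recover $N_4=N_5$ for free from the global double count $\sum_{i=1}^{6}N_i=\binom{2n+2}{2}(2n-1)!!=(n+1)(2n+1)!!$ (every insertion of two values as an ascending last column lands in exactly one $T^{(i)}_n$) combined with $N_1+N_2+N_3=(2n+1)!!$. This buys one computer evaluation instead of two, a uniform ``six region patterns $\leftrightarrow$ six $B_i$'' dictionary in place of case-by-case skeleton pictures, and a built-in consistency check on the arithmetic; it also silently repairs a typo in the paper, whose displayed sums \ref{TB6} and \ref{TB45} stop at $k=n-1$ when the correct range is $1\le k\le n$ (already at $n=1$ the paper's right-hand side of \ref{TB6} gives $3$ while $s_1(\mathcal{P}\cup B_6)=4$), the range your derivation produces. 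Your one flagged verification --- that the involutions keep the $(n-1)$-st piece in $\{A_1,A_2,A_3\}$ because every permuted value exceeds $v$ and hence every bottom entry, the only constraints the skeleton of $\{A_1,A_2,A_3\}$ imposes on a top corner --- is exactly the right check and is sound.
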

         \begin{proof}
             The puzzle with $\mathcal{P}\cup B_i$ as support, can be simply divided into two categories, whether it contains $B_i$ as its minimal support or not. If $B_i$ is not a member of its minimal support, this puzzle will degenerate into $S_n(\{A, B, D\})$, which has been shown in Section \ref{Se:3.3}. Then we just need to count the standard puzzle sequence with $\mathcal{P}\cup B_i$ as its minimal support.

             Firstly, for the equation \ref{1B123} we switch the order of the top and bottom of the last column of these three standard puzzles. Then we notice that these puzzles are changed into $S_n(\mathcal{P})$ ending with $A_1, A_2$ and $A_3$ respectively.

             Therefore we just need to show that the number of $\{A, B, D\}^n$ ending with $A, B$ and $D$ are equal and equal to one third of $\bigl|\{A, B, D\}^n\bigr|$. Simply switch the three digits in the upper right corner, and then the equation \ref{1B123} can be shown.

             Secondly we give a proof of the equation \ref{1B6}. To see clearly the relation between the numbers, we draw the skeleton of $S_n(\mathcal{P}\cup B_6)$. Then we find it only has two numbers less than the lower right number $2n-k$ of $S_{n-1}(\mathcal{P}_{2n-k})$ plus 2. Therefore, we have the following equation.
             \begin{equation}
                 s_n(\mathcal{P}\cup B_6)
                 =\sum_{k=1}^{n-1}\binom{2n-k+1}{2}T(n-1,k)+(2n+1)!!\label{TB6}
             \end{equation}

             \begin{figure}[h]
                \centering
                \includegraphics[scale=0.4]{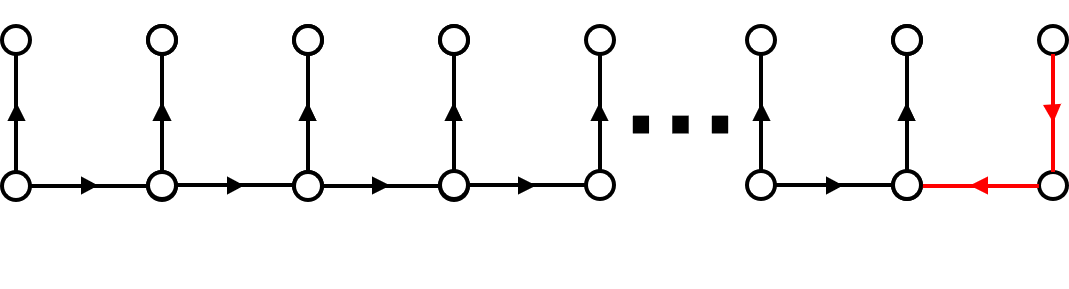}
                \caption{The skeleton of $\mathcal{P}\cup B_6$ with $B_6$}
                \label{fig:ABDQ}
             \end{figure}

             Lastly, for the equation \ref{1B45}, by swaping the upper left and the lower right of piece on the rightmost side of $S_n(\mathcal{P}\cup B_4)$, we can get a puzzle $S_n(\mathcal{P}\cup B_5)$. And vice versa. Then, the equivalence property has been shown. Additionally, notice that 
             \begin{align}
                 &s_n(\mathcal{P}\cup B_4\cup B_5)
                 -s_n(\mathcal{P})\notag\\
                 =&s_n(\mathcal{P}\cup B_4)
                 +s_n(\mathcal{P}\cup B_5)
                 -2s_n(\mathcal{P})\notag\\
                 =&2\bigl(s_n(\mathcal{P}\cup B_4)
                 -s_n(\mathcal{P})\bigr)
             \end{align}
             So we just need to count the numbers 
             $s_n(\mathcal{P}\cup B_4 \cup B_5)$. 
             We draw the skeleton to observe it.

             \begin{figure}[h]
                \centering
                \includegraphics[scale=0.4]{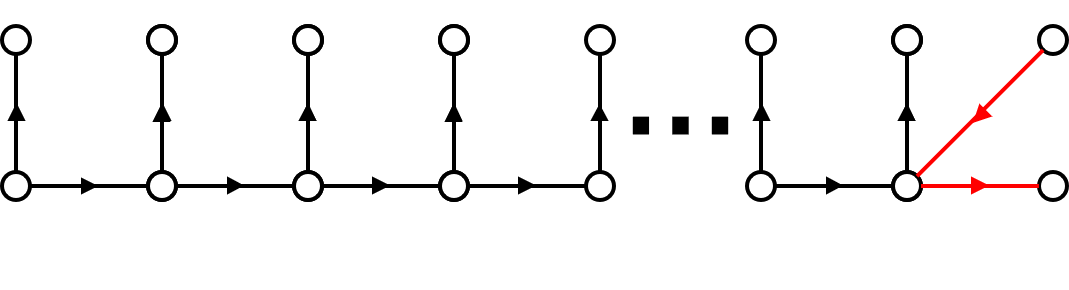}
                \caption{The skeleton of $S_n(\mathcal{P}\cup B_4 \cup B_5)$ with $B_4$ or $B_5$}
                \label{fig:ABDJL}
             \end{figure}

             According to this skeleton, we can write the following equation.
             \begin{equation}
                s_n(\mathcal{P}\cup B_4)
                =s_n(\mathcal{P}\cup B_5)
                =\frac{1}{2}\sum_{k=1}^{n-1}(2n-k)(k+1)T(n-1,k)+(2n+1)!!\label{TB45}
             \end{equation}
             In the end, we simplify the right-hand side of equations \ref{TB6} and \ref{TB45} by computer.
             \begin{align}
                 &\frac{1}{2}\sum_{k=1}^{n-1}(2n-k)(k+1)T(n-1,k)+(2n+1)!!=2^n(n+1)!\\
                 &\sum_{k=1}^{n-1}\binom{2n-k+1}{2}T(n-1,k)+(2n+1)!!=(n+3)(2n+1)!!-(2n+2)!!
             \end{align}
             Hence, the Theorem \ref{4.2} is proved. 
         \end{proof}
         Then, we give some conclusions about the standard puzzle sequence of $\{A_1, A_2\}\bigcup B_i$.
         \begin{theorem}Let $\mathcal{Q}=\{A_1, A_2\}$
             \begin{align}
                &s_n(\mathcal{Q}\cup B_1)
                =s_n(\mathcal{Q}\cup B_2)
                =s_n(\mathcal{Q}\cup B_3)
                =\frac32(2n)!! \\
                &s_n(\mathcal{Q}\cup B_4)
                =s_n(\mathcal{Q}\cup B_5)
                =(2n+2)(2n-1)!!-\frac12(2n)!!\\
                &s_n(\mathcal{Q}\cup B_6)
                =(2n^2+8n+1)(2n-2)!!-(4n+4)(2n-1)!!
             \end{align}\label{4.3}
         \end{theorem}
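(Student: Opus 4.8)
The plan is to transport the argument of Theorem \ref{4.2} verbatim, replacing the base set $\{A_1,A_2,A_3\}$ by $\mathcal{Q}=\{A_1,A_2\}$ and using $s_n(\mathcal{Q})=(2n)!!$ from Theorem \ref{theorem} wherever $(2n+1)!!$ appeared. The first observation I would record is that a $1$-converter $B_i$ can occur only as the rightmost piece of any puzzle supported on $\mathcal{Q}\cup B_i$: every $A_j$ preserves the ``top $>$ bottom'' relation in both columns while each $B_i$ reverses it in the right column, so no piece of $\mathcal{Q}\cup B_i$ can follow a $B_i$. Consequently I split $S_n(\mathcal{Q}\cup B_i)$ according to whether $B_i$ belongs to the minimal support. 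The puzzles avoiding $B_i$ form exactly $S_n(\mathcal{Q})$ and contribute $(2n)!!$, so it remains to count the puzzles whose last piece reduces to $B_i$ and whose first $n-1$ pieces lie in $\mathcal{Q}$.

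For the first identity I would, as in Theorem \ref{4.2}, swap the top and bottom entries of the rightmost column, which sends $B_1,B_2,B_3$ to $A_1,A_2,A_3$ respectively. For $B_1$ and $B_2$ this produces an element of $S_n(\mathcal{Q})$ ending in $A_1$, resp. $A_2$, and an equidistribution argument for the two endings (interchanging the roles of the two top entries of the final piece) shows each family has size $\tfrac12(2n)!!$. The case $B_3$ lands instead in a puzzle ending in $A_3$ whose other pieces lie in $\mathcal{Q}$; I would match it to the $A_1$-ending family by the analogous relabelling of the last column, again obtaining $\tfrac12(2n)!!$. Adding the degenerate $(2n)!!$ gives $\tfrac32(2n)!!$ in all three cases. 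Here the $B_3$ sub-case is the one place where a direct symmetry is unavailable and the extra relabelling is genuinely needed.

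For the remaining two identities I would proceed exactly as in Theorem \ref{4.2}: draw the skeleton of $S_n(\mathcal{Q}\cup B_6)$ and of $S_n(\mathcal{Q}\cup B_4\cup B_5)$, read off from each how many of the admissible values may fill the top-right and bottom-right corners once the last piece is forced to be the converter, and thereby write the ``converter in the minimal support'' counts as weighted sums $\sum_{k=1}^{n-1} w_i(n,k)\,T(n-1,k)$, where $T(n-1,k)=s_{n-1}(\{A_1,A_2,A_3\}_{2n-k})$ is the array of Theorem \ref{A123_}; the reduction from $\mathcal{Q}$ to $\{A_1,A_2,A_3\}$ is precisely the column-shift bijection already used in the second proof of Theorem \ref{theorem} (alternatively one may first set up the dedicated array $s_n(\mathcal{Q}_{2n-k+2})$ and its own recurrence). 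The equality $s_n(\mathcal{Q}\cup B_4)=s_n(\mathcal{Q}\cup B_5)$ follows, as before, by interchanging the upper-left and lower-right entries of the last piece, so that it suffices to evaluate $s_n(\mathcal{Q}\cup B_4\cup B_5)$ and halve the converter contribution.

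Finally I would restore the $(2n)!!$ degenerate term and simplify each resulting hypergeometric sum in $k$ by the automated methods cited after Theorem \ref{A123_}, verifying that they collapse to $(2n+2)(2n-1)!!-\tfrac12(2n)!!$ and $(2n^2+8n+1)(2n-2)!!-(4n+4)(2n-1)!!$, together with the small-$n$ base cases. I expect the main obstacle to be twofold: first, determining the exact weights $w_i(n,k)$ from the skeletons, i.e. counting the admissible placements of the largest labels in the converter column subject to the $B_4,B_5$ relation and to the $B_6$ relation separately; and second, certifying that the two hypergeometric sums telescope to the stated closed forms rather than to the cleaner $2^n(n+1)!$ and $(n+3)(2n+1)!!-(2n+2)!!$ obtained in the $\{A_1,A_2,A_3\}$ case.
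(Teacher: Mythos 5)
Your proposal is correct, and for the $B_4,B_5,B_6$ identities it coincides with the paper's proof: the paper says only that the argument is ``similar to the proof of the previous theorem'' and records the converter contributions as weighted sums over the array of Theorem \ref{A123_}, namely $\sum_{k=1}^{n-1}\binom{k+2}{2}(2n-k-1)T(n-2,k)+(2n)!!$ for $B_4=B_5$ and $\sum_{k=1}^{n-1}\binom{2n-k}{2}(k+1)T(n-2,k)+(2n)!!$ for $B_6$, then evaluates these closed forms by computer. One caution: the paper's sums run over $T(n-2,k)$, not $T(n-1,k)$ as you wrote --- stripping the converter leaves a length-$(n-1)$ puzzle supported on $\mathcal{Q}$, whose corner distribution is \emph{not} the array $T$; it must first be reduced to $\{A_1,A_2,A_3\}$-puzzles of length $n-2$ via the column-shift bijection, exactly as your parenthetical remark anticipates, so this is a misindexing that your own plan repairs rather than a gap. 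Where you genuinely depart from the paper is the first identity: the paper handles $B_1,B_2,B_3$ by the same uniform machinery, $\sum_{k=1}^{n-1}\binom{k+3}{3}T(n-2,k)+(2n)!!=\tfrac32(2n)!!$, again certified by computer algebra, whereas you evaluate the converter contribution bijectively as $\tfrac12(2n)!!$. Your shortcut is valid: the last-column flip sends $B_1,B_2$ to $A_1$- and $A_2$-endings of $S_n(\mathcal{Q})$, and these equidistribute under the involution exchanging the two top entries of the last piece (each is constrained only to exceed the bottom-right label, so the swap is legal and toggles the ending type); for $B_3$, the flip lands on an $A_3$-tail outside $S_n(\mathcal{Q})$, but your repositioning bijection works because in both an $A_3$-tail and an $A_1$-tail the three labels filling the top of column $n$ and the last column are constrained only by ``smallest of the three exceeds the bottom of column $n$''. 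Your route buys a computer-free, combinatorial proof of the first identity at the cost of these verifications; the paper's route is uniform across all six converters and supplies the explicit weights, which is precisely the piece you leave as your admitted main obstacle.
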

         \begin{proof}
             Similar to the proof of the previous theorem, we can get the following equation.
             \begin{align*}
                 &s_n(\mathcal{Q}\cup B_1)
                 =s_n(\mathcal{Q}\cup B_2)
                 =s_n(\mathcal{Q}\cup B_3)
                 =\sum_{k=1}^{n-1}\binom{k+3}{3}T(n-2,k)+(2n)!!\\
                 &s_n(\mathcal{Q}\cup B_4)
                 =s_n(\mathcal{Q}\cup B_5)
                 =\sum_{k=1}^{n-1}\binom{k+2}{2}(2n-k-1)T(n-2,k)+(2n)!!\\
                 &s_n(\mathcal{Q}\cup B_6)
                 =\sum_{k=1}^{n-1}\binom{2n-k}{2}(k+1)T(n-2,k)+(2n)!!
             \end{align*}
             And then by calculation, we can get equations as follows.
             \begin{align*}
                 &\sum_{k=1}^{n-1}\binom{k+3}{3}T(n-2,k)+(2n)!!
                 =\frac32(2n)!!\\
                 &\sum_{k=1}^{n-1}\binom{k+2}{2}(2n-k-1)T(n-2,k)+(2n)!!=(2n+2)(2n-1)!!-\frac12(2n)!!\\
                 &\sum_{k=1}^{n-1}\binom{2n-k}{2}(k+1)T(n-2,k)+(2n)!!=(2n^2+8n+1)(2n-2)!!-(4n+4)(2n-1)!!
             \end{align*}
             Hence, the proof of the Theorem \ref{4.3} is completed.
         \end{proof}
         Analogously, the standard puzzle sequence of the union of $\{A_1, A_2, A_3\}$ or $\{A_1, A_2\}$ and $\{C_i\}$ $(i\in\{1, 2, 3, 4, 5, 6\})$ can be enumerated as follows. And no proof will be given.
         \begin{theorem}
             Let $\mathcal{P}=\{A_1, A_2, A_3\}$ and $\mathcal{Q}=\{A_1, A_2\}$.
            \begin{align*}
                 &s_n(\mathcal{P}\cup C_1)
                 =s_n(\mathcal{P}\cup C_2)
                 =\binom{2n}{2}\cdot(2n-3)!!+(2n+1)!!\\
                 &s_n(\mathcal{P}\cup C_3)
                 =(2n+1)!!+(2n-1)!! \\
                 &s_n(\mathcal{P}\cup C_4)
                 =s_n(\mathcal{P}\cup C_5)
                 =s_n(\mathcal{P}\cup C_6)
                 =\binom{2n+1}{3}\cdot(2n-3)!!+(2n+1)!!\\
                 &s_n(\mathcal{Q}\cup C_1)
                 =\binom{2n-1}{2}\cdot(2n-4)!!+(2n)!!\\
                 &s_n(\mathcal{Q}\cup C_2)
                 =2\cdot(2n)!!-\binom{2n-1}{2}\cdot(2n-4)!!\\
                 &s_n(\mathcal{Q}\cup C_3)
                 =(2n)!!+(2n-2)!! \\
                 &s_n(\mathcal{Q}\cup C_4)
                 =\bigl[\binom{2n+1}{3}-1\bigr]\cdot(2n-4)!!+(2n)!!\\
                 &s_n(\mathcal{Q}\cup C_5)
                 =\frac{4n^2-7n+3}{3}(2n-4)!!+(2n)!!\\
                 &s_n(\mathcal{Q}\cup C_6)
                 =\binom{2n}{3}\cdot(2n-4)!!+(2n)!!\\
            \end{align*}
         \end{theorem}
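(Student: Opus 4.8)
The plan is to reproduce, in mirror image, the argument used for the $1$-converters in Theorems \ref{4.2} and \ref{4.3}. The first thing I would record is a rigidity statement for the position of $C_i$. Every piece of $\mathcal{P}=\{A_1,A_2,A_3\}$ and of $\mathcal{Q}=\{A_1,A_2\}$ has its top label exceeding its bottom label in both columns, whereas a $2$-converter $C_i$ has its top label below its bottom label in the \emph{left} column. If $C_i$ occupied any position other than the leftmost one, the column it shares with the piece to its left would have to be increasing (read from $C_i$) and decreasing (read from the $A$-piece) at once, which is impossible. Hence $C_i$ can only be the leftmost piece, and in particular occurs at most once. This is the exact reflection of the fact, used in Theorem \ref{4.2}, that a $1$-converter $B_i$ is forced to the right end; the two situations are interchanged by the map $F_1$ of Definition \ref{Definition}, which sends $C_i$ to $B_{i+3}$, although $F_1$ simultaneously flips the $A_j$, so it motivates the set-up rather than reducing directly to Theorem \ref{4.2}.

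With this in hand I would split $S_n(\mathcal{P}\cup C_i)$ according to whether $C_i$ lies in the minimal support. Puzzles that avoid $C_i$ have support inside $\mathcal{P}$ and contribute the term $(2n+1)!!$ (resp.\ $(2n)!!$ for $\mathcal{Q}$) by Theorem \ref{theorem}; this is precisely the constant summand appearing in every formula. For the puzzles that do use $C_i$, it is the unique leftmost piece, so deleting it together with whatever small labels its skeleton forces into fixed corners, and relabelling the survivors order-preservingly, produces a puzzle with support $\mathcal{P}$ of strictly smaller size. The skeleton of $C_i$ dictates exactly how the labels of $C_i$ that are \emph{not} shared with the second column must compare with the rest; translating those inequalities into a condition on the bottom-right corner of the reduced puzzle and summing against the quantity $T(\,\cdot\,,k)$ of Theorem \ref{A123_}, I expect a recurrence of the form
\begin{equation*}
    s_n(\mathcal{P}\cup C_i)=\sum_{k}\mathrm{coeff}_i(n,k)\,T(n-1,k)+(2n+1)!!,
\end{equation*}
where $\mathrm{coeff}_i(n,k)$ is a low-degree polynomial, the second argument of $T$ is lowered by one further unit in the cases where $C_i$ pins down two small labels, and the whole scheme runs over $T(n-2,k)$ with additive term $(2n)!!$ for $\mathcal{Q}$; the resulting prefactors are the binomials $\binom{2n}{2}$, $\binom{2n+1}{3}$, and so on.

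Next I would dispose of the coincidences among the $C_i$ before evaluating anything, in the spirit of the identity $s_n(\mathcal{P}\cup B_4)=s_n(\mathcal{P}\cup B_5)$ in Theorem \ref{4.2}: exchanging the two labels in a suitably chosen corner pair of the leftmost piece gives a bijection identifying $S_n(\mathcal{P}\cup C_1)$ with $S_n(\mathcal{P}\cup C_2)$ and identifying $S_n(\mathcal{P}\cup C_4)$, $S_n(\mathcal{P}\cup C_5)$ and $S_n(\mathcal{P}\cup C_6)$, so that only one representative of each class needs its own recurrence. Finally each surviving sum is a terminating hypergeometric series in $k$, which I would collapse to the stated closed form with the same automated summation already invoked for equations \eqref{AB4}, \eqref{TB6} and \eqref{TB45}.

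The genuine obstacle is the determination of $\mathrm{coeff}_i(n,k)$. Because $C_i$ abuts the bottom-left corner, into which every $\mathcal{P}$-puzzle is forced to place its global minimum, the free corners of $C_i$ compete with that minimum, and the range of admissible values—hence the amount by which the reduction lowers the size and the polynomial multiplying $T$—changes genuinely from one $C_i$ to the next. For instance $C_3$, which pins down only the single label $1$, drops the size by one and yields coefficient $1$, whereas the remaining $C_i$ fix two small labels and drop the size by two. The same loss of symmetry is what makes the $\mathcal{Q}$ family irregular: there $s_n(\mathcal{Q}\cup C_1)$ and $s_n(\mathcal{Q}\cup C_2)$ are no longer equal but merely complementary, summing to $3(2n)!!$. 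Getting each of these coefficient polynomials right, rather than the subsequent hypergeometric simplification, is where the real care lies.
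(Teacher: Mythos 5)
The paper itself gives no proof of this theorem --- it states ``Analogously \dots\ no proof will be given,'' pointing to the arguments for Theorems \ref{4.2} and \ref{4.3} --- so your proposal must be judged against that intended analogue. Your structural groundwork is correct and does match it: a $2$-converter is indeed forced into the leftmost position (mirror of $B_i$ being forced rightmost), the split by minimal support produces the constant summands $(2n+1)!!$ resp.\ $(2n)!!$ via Theorem \ref{theorem}, the corner-swap bijections identifying $S_n(\mathcal{P}\cup C_1)$ with $S_n(\mathcal{P}\cup C_2)$ and $S_n(\mathcal{P}\cup C_4)$, $S_n(\mathcal{P}\cup C_5)$, $S_n(\mathcal{P}\cup C_6)$ are valid (they work for $\mathcal{P}=\{A_1,A_2,A_3\}$ because its common poset imposes no relation between a top label and the adjacent column, and they break for $\mathcal{Q}=\{A_1,A_2\}$, consistent with your correct observation that $s_n(\mathcal{Q}\cup C_1)+s_n(\mathcal{Q}\cup C_2)=3(2n)!!$).

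The genuine gap is in your counting engine: you propose to condition on the bottom-right corner of the truncated puzzle and sum against $T(n-1,k)$ or $T(n-2,k)$ from Theorem \ref{A123_}. That statistic is the right one only for a converter appended on the \emph{right}, as in Theorem \ref{4.2}, where the two new labels compare only against the bottom-right of what remains. For a $C_i$ prepended on the \emph{left}, the two new labels compare against the first column of the truncated puzzle, i.e.\ against its bottom-left (always the minimum) and its \emph{top-left}; the number of admissible extensions is a function of the top-left value, which is neither determined by nor independent of the bottom-right (already for $m=2$: among the $15$ puzzles of $S_2(\{A_1,A_2,A_3\})$ none with bottom-right $3$ has top-left $2$, though both marginals are nonzero), so no coefficients $\mathrm{coeff}_i(n,k)$ with your claimed combinatorial meaning exist and the proposed recurrence cannot be derived. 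The missing lemma --- the true mirror of Theorem \ref{A123_} --- is that the top-left statistic is \emph{uniform}: exactly $(2m-1)!!$ puzzles of $S_m(\{A_1,A_2,A_3\})$, resp.\ $(2m-2)!!$ of $S_m(\{A_1,A_2\})$, for each admissible top-left value, proved by noting that prepending a column $(1,t)$ to any shorter puzzle is legal for every $t$ (all three relative positions of the top-left occur among $A_1,A_2,A_3$; for $\{A_1,A_2\}$ one only needs $t$ above the next bottom label). With this the formulas follow from elementary sums, e.g.\ $s_n(\mathcal{P}\cup C_1)-(2n+1)!!=(2n-3)!!\sum_{v}(2n+1-v)=\binom{2n}{2}(2n-3)!!$ and $s_n(\mathcal{P}\cup C_4)-(2n+1)!!=(2n-3)!!\sum_{v}\binom{v}{2}=\binom{2n+1}{3}(2n-3)!!$, so no hypergeometric summation is needed, contrary to your final step. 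A minor slip besides: $C_3$ pins \emph{both} labels of the deleted column ($1$ on top, $2$ below), leaving a wholly unconstrained puzzle of length $n-1$, which is where the $(2n-1)!!$ comes from.
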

     \subsection{Standard puzzles relating to Catalan numbers}
         Similar to the last subsection, in this subsection, 
         we enumerate the standard puzzle sequence of the union of $\{A_2, A_3\}$ or $\{A_2\}$ and  $B_i$ (resp. $C_i$) for $1\leq i\leq6$.

         \begin{theorem}
             Let $\mathcal{P}$ be $\{A_2, A_3\}$. Then we have
         \begin{equation}
             s_n(\mathcal{P}_{n+k+1})=t(n,k)\label{A23_P}
         \end{equation}
         where t(n,k) is the Catalan's triangle in OEIS \cite{OEIS} entry A009766, for $0 \leqslant k \leqslant n$.\label{k2}
    \end{theorem}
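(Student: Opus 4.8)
The plan is to mimic the column-removal argument used for Theorem~\ref{A123_}, since Catalan's triangle obeys a recurrence of exactly the same shape.

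First I would pin down what the support condition imposes. Writing a puzzle of $S_n(\mathcal{P})$ as $\bigl[\begin{smallmatrix} t_1\cdots t_{n+1}\\ b_1\cdots b_{n+1}\end{smallmatrix}\bigr]$, a piece reduces to $A_2$ or $A_3$ precisely when its bottom entries lie below the top entries in the same column and its left entries lie below the right entries in the same row (the two pieces differ only in the comparison of the top-left and bottom-right labels). Hence $\mathcal{P}=\{A_2,A_3\}$ is a support of the puzzle if and only if both rows are strictly increasing and $b_j<t_j$ for every $j$; equivalently the array is a standard Young tableau of the $2\times(n+1)$ rectangle. In particular $t_{n+1}=2n+2$ is forced, the bottom-right corner $b_{n+1}$ is the largest label of the bottom row, and it ranges over $\{n+1,\dots,2n+1\}$, so writing $b_{n+1}=n+k+1$ gives exactly $0\le k\le n$, matching the index range of $t(n,k)$.

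Next I would set up the recurrence. Given $\alpha\in S_n(\mathcal{P}_{n+k+1})$, delete its rightmost column (whose entries are $b_{n+1}=n+k+1$ on the bottom and $t_{n+1}=2n+2$ on top) and relabel the remaining $2n$ entries by $1,\dots,2n$ preserving order; this yields $\alpha'\in S_{n-1}(\mathcal{P})$. Since $b_n<n+k+1$ is unchanged by the relabeling and $b_n\ge n$, the bottom-right corner of $\alpha'$ is $b_n=n+k'$ for some $0\le k'\le k$, i.e. $\alpha'\in S_{n-1}(\mathcal{P}_{n+k'})$. Conversely, for a fixed target $k$, each $\alpha'\in S_{n-1}(\mathcal{P}_{n+k'})$ with $k'\le k$ reconstructs a unique $\alpha$: shift up by one every label of $\alpha'$ that is at least $n+k+1$ and then append the column carrying $n+k+1$ on the bottom and $2n+2$ on top, which is readily checked to lie in $S_n(\mathcal{P}_{n+k+1})$. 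This bijection gives
\begin{equation*}
    s_n(\mathcal{P}_{n+k+1})=\sum_{j=0}^{k}s_{n-1}(\mathcal{P}_{n+j}),
\end{equation*}
which is precisely the defining recurrence of Catalan's triangle, $t(n,k)=\sum_{j=0}^{k}t(n-1,j)$. Together with the base case $n=1$ (the single piece is $A_2$ when $b_2=2$, i.e. $k=0$, and $A_3$ when $b_2=3$, i.e. $k=1$, giving $s_1(\mathcal{P}_2)=s_1(\mathcal{P}_3)=1=t(1,0)=t(1,1)$), an induction finishes the proof.

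The main obstacle is a matter of care rather than depth: I must use that ``$\mathcal{P}$ is a support'' means the minimal support is contained in $\{A_2,A_3\}$ (so neither piece is required to occur), and I must verify that the reconstruction above is genuinely inverse to column deletion and always lands in $S_n(\mathcal{P}_{n+k+1})$ with the correct corner, so that the summation range is exactly $0\le j\le k$ and no boundary terms are lost. As a conceptual cross-check one may instead encode each puzzle as a Dyck path of semilength $n+1$ (step $i$ up if $i$ lies in the bottom row, down otherwise); then $b_{n+1}=n+k+1$ says the last up-step sits at position $n+k+1$, the trailing $n-k+1$ down-steps are forced, and what remains is a lattice path of $n$ up-steps and $k$ down-steps that never dips below the axis --- the ballot number $t(n,k)$.
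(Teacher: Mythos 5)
Your proposal is correct and takes essentially the same approach as the paper: delete the rightmost column, relabel to get a puzzle in $S_{n-1}(\mathcal{P}_{n+j})$, derive the recurrence $s_n(\mathcal{P}_{n+k+1})=\sum_{j=0}^{k}s_{n-1}(\mathcal{P}_{n+j})$ matching the ballot recurrence $t(n,k)=\sum_{j=0}^{k}t(n-1,j)$, and verify the base case $n=1$. Your extra details --- the standard-Young-tableau characterization of the support, the explicit inverse construction, and the Dyck-path cross-check --- simply make explicit what the paper leaves implicit.
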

    \begin{proof}
         By definition of $t(n,k)$, we make the convention $t(n,n+1)=0$ and can get $t(n,k) = \sum_{j=0}^{k} t(n-1,j)$

         Moreover, for a puzzle in $S_{n}(\mathcal{P}_{n+k+1})$, the number to the left of $n+k+1$ can be $\{n, n+1, \dots, n+k\}$. Given the number to the left of the number $n+k+1$, we get rid of the rightmost column of this puzzle, and replace all the numbers by $\{1, 2, 3, \dots, 2n\}$ respecting the label ordering. Then we get a standard puzzle in
         $S_{n-1}(\mathcal{P}_{n+j})$ for $0 \leqslant j\leqslant n-1$. Besides, each element of $S_{n-1}(\mathcal{P}_{n+j})$ can be produced by only one elements of $S_{n}(\mathcal{P}_{n+k+1})$.
         
         Then we have,
         \begin{equation*}
            s_n(\mathcal{P}_{n+k+1})
            =\sum_{j=0}^{k}s_{n-1}(\mathcal{P}_{n+j})
         \end{equation*}
         Then the recurrence relation is proved. By checking the equation \ref{A23_P} for $n=1$ and this theorem is shown.
     \end{proof}
     Then, by checking in OEIS, we can get the following formula.
     \begin{equation}
        t(n,k)=\frac{n-k+1}{n+1}\binom{n+k}{n}
     \end{equation}
     \begin{theorem} Let $\mathcal{P}$ be $\{A_2, A_3\}$. Then,
        \begin{align}
            &s_n(\mathcal{P}\cup B_1)
            =\frac3{n+3}\binom{2n+2}{n}\\
            &s_n(\mathcal{P}\cup B_2)
            =\frac{7n+2}{n^2+2n}\binom{2n}{n+1}\\
            &s_n(\mathcal{P}\cup B_3)
            =C(n)+C(n-1)\\
            &s_n(\mathcal{P}\cup B_4)
            =\binom{2n+1}{n}\\
            &s_n(\mathcal{P}\cup B_5)
            =\frac{8n(2n+1)}{(n+2)(n+3)}\binom{2n-1}{n}
            +\frac{1}{n+2}\binom{2n+2}{n+1} \label{46}\\
            &s_n(\mathcal{P}\cup B_6)
            =\frac{3\binom{2n-1}{n}\binom{2n+2}{3}}{(n+2)(n+3)}
            +\frac{\binom{2n+2}{n+1}}{n+2}
        \end{align}\label{thm:4.6.}
     \end{theorem}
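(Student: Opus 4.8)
The plan is to reduce each of the six cases to the Catalan-triangle count of Theorem \ref{k2} with a single column attached at the right end, exactly as in the proofs of Theorems \ref{4.2} and \ref{4.3}. First I would record the structural fact that makes this possible: every piece of $\{A_2,A_3\}$ has both columns ascending (top label larger than bottom label), whereas each $B_i$ is a $1$-converter, turning an ascending left column into a descending right column. Since $A_2$, $A_3$ and every $B_i$ all require an ascending left column, nothing can be concatenated to the right of a $B_i$; hence in any puzzle of $S_n(\mathcal{P}\cup B_i)$ the piece $B_i$, when it occurs, occurs exactly once and occupies the rightmost position. This lets me split
\[
s_n(\mathcal{P}\cup B_i)=s_n(\mathcal{P})+\bigl|\{\text{puzzles actually using }B_i\}\bigr|,
\]
where $s_n(\mathcal{P})=C(n+1)$ is the Catalan count of the pure $\{A_2,A_3\}$-puzzles quoted in Section \ref{Se:2} ($C$ denoting the Catalan numbers).

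Next I would treat the puzzles that do use $B_i$. Deleting the rightmost column of such a puzzle and relabelling the remaining $2n$ entries order-preservingly yields a $\{A_2,A_3\}$-puzzle $\gamma$ of length $n-1$, whose last column is the left column $(TL,BL)$ of the deleted $B_i$. Here I would establish the small lemma that in every $\{A_2,A_3\}$-puzzle the top-right corner is the global maximum: since each of $A_2,A_3$ carries the largest label of the piece in its top-right corner, the top-row entries strictly increase from left to right, so the top-right of $\gamma$ equals its maximum $2n$. Consequently the attaching column of $\gamma$ is $(TL,BL)=(2n,\,n+k)$, where the bottom-right corner $n+k$ ranges exactly over the values counted by $t(n-1,k)$ through Theorem \ref{k2}.

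With $TL$ pinned to the maximum, the number of admissible puzzles for a fixed $\gamma$ is simply the number of ways to reinsert the two new labels $TR,BR$ so that $(TL,TR,BL,BR)$ reduces to $B_i$. The pattern of $B_i$ forces each of $TR,BR$ into one of the three regions determined by $BL$ and $TL$ (below $BL$, strictly between $BL$ and $TL$, or above $TL=\max$), and a direct count of the interleavings gives a multiplicity $m_i(n,k)$ depending only on $n$ and $k$: for instance $\binom{n-k+1}{2}$ for $B_1$ (both new labels between $BL$ and $TL$), $n-k$ for $B_2$, $1$ for $B_3$, $n+k$ for $B_4$, $(n+k)(n-k)$ for $B_5$, and $\binom{n+k+1}{2}$ for $B_6$ (both new labels below $BL$). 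Summing over $\gamma$ then gives, for each $i$,
\[
s_n(\mathcal{P}\cup B_i)=C(n+1)+\sum_{k=0}^{n-1}m_i(n,k)\,t(n-1,k).
\]

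Finally I would substitute the closed form $t(n-1,k)=\frac{n-k}{n}\binom{n+k-1}{n-1}$ and simplify each sum, just as the hypergeometric sums in the proofs of Theorems \ref{4.2} and \ref{4.3} were simplified by computer, to recover the stated closed forms. I expect the genuine work to lie in two places: getting every multiplicity $m_i(n,k)$ right, which rests on the two structural observations above (the rightmost position of $B_i$ and the maximality of the top-right corner of a $\{A_2,A_3\}$-puzzle), and then certifying the six closed-form evaluations. The latter is routine given the explicit Catalan-triangle formula and automated hypergeometric summation, so the main obstacle is the careful, case-by-case insertion bookkeeping that produces the correct summand $m_i(n,k)$ for each of the six converters.
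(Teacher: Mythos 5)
Your plan is, in substance, the paper's own proof. The paper likewise observes that a $1$-converter can only occupy the rightmost position, splits $s_n(\mathcal{P}\cup B_i)$ into the pure $\{A_2,A_3\}$ count plus the puzzles actually using $B_i$, deletes the last column to land on the Catalan-triangle counts $t(\cdot,\cdot)$ of Theorem \ref{k2}, and finishes by computer simplification of the resulting sums. Your multiplicities are correct: checking each $B_i$ pattern against the maximality of the top-right corner of a $\{A_2,A_3\}$-puzzle (your lemma, which is valid since both $A_2$ and $A_3$ have ascending columns and increasing top rows), one gets exactly $\binom{n-k+1}{2}$, $n-k$, $1$, $n+k$, $(n+k)(n-k)$, $\binom{n+k+1}{2}$, and these agree after reindexing with the paper's listed sums for $B_3$, $B_4$, $B_6$. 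Two mild differences: for $B_1,B_2$ the paper peels two columns and writes sums over $t(n-2,k-1)$, whereas your one-column sums over $t(n-1,k)$ evaluate to the same numbers; and for $B_5$ your direct count $m_5=(n+k)(n-k)$ replaces the paper's detour through the union, $s_n(\mathcal{P}\cup B_4\cup B_5)=s_n(\mathcal{P}\cup B_4)+s_n(\mathcal{P}\cup B_5)-s_n(\mathcal{P})$, which is a small genuine simplification — indeed $\sum_k(n+k)(n-k)t(n-1,k)+C(n)$ equals the paper's $\sum_k(n+k-1)(n-k+2)t(n-1,k-1)-\binom{2n+1}{n}+2C(n)$, where $C(n)$ denotes $\frac{1}{n+2}\binom{2n+2}{n+1}$.

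One concrete warning about your final step: ``simplify each sum \ldots to recover the stated closed forms'' cannot succeed verbatim for \eqref{46}, because \eqref{46} as printed is not $s_n(\mathcal{P}\cup B_5)$. At $n=1$ the right side of \eqref{46} gives $4$ while $s_1(\mathcal{P}\cup B_5)=3$ (three admissible single pieces), and at $n=2$ it gives $17$ while direct enumeration — and your sum $5+\bigl(4\cdot t(1,0)+3\cdot t(1,1)\bigr)$, and the paper's own intermediate formula — all give $12$. The printed right side of \eqref{46} is in fact the closed form of $s_n(\mathcal{P}\cup B_4\cup B_5)$ (the paper's equation \ref{48}); the correction $-\binom{2n+1}{n}+C(n)$ was evidently dropped when passing from the proof to the theorem statement. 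So your (correct) bookkeeping should lead you to
\begin{equation*}
s_n(\mathcal{P}\cup B_5)=\frac{8n(2n+1)}{(n+2)(n+3)}\binom{2n-1}{n}+\frac{2}{n+2}\binom{2n+2}{n+1}-\binom{2n+1}{n},
\end{equation*}
and you should flag the misprint rather than try to force agreement with \eqref{46}; the other five closed forms do check out against your sums.
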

     \begin{proof}
         The proof of this result is quite similar to the proof for the Theorem \ref{4.2}. So we just give a proof of \ref{46} to be an example. Furthermore, we have listed all the results as follows.
        \begin{align}
             &s_n(\mathcal{P}\cup B_1)
             =\sum_{k=1}^{n-1}\binom{n-k+3}{3}t(n-2,k-1)+C(n)\\
             &s_n(\mathcal{P}\cup B_2)
             =\sum_{k=1}^{n-1}\binom{n-k+2}{2}t(n-2,k-1)+C(n)\\
             &s_n(\mathcal{P}\cup B_3)
             =C(n)+C(n-1)\\
             &s_n(\mathcal{P}\cup B_4)
             =\sum_{k=1}^n(n+k-1)t(n-1,k-1)+C(n)\\
             &s_n(\mathcal{P}\cup B_5)
             =\sum_{k=1}^n(n+k-1)(n-k+2)t(n-1,k-1)-\binom{2n+1}{n}+2C(n)\\
             &s_n(\mathcal{P}\cup B_6)
             =\sum_{k=1}^n\binom{n+k}{2}t(n-1,k-1)+C(n)
         \end{align}
         For the equation \ref{46}, notice that $L$ and $J$ can not appear in the $S_n(\mathcal{P}\cup B_4\cup B_5)$ at the same time. So we get 
         \begin{equation}
             s_n(\mathcal{P}\cup B_4\cup B_5)
             =s_n(\mathcal{P}\cup B_4)
             +s_n(\mathcal{P}\cup B_5)
             -s_n(\mathcal{P})\label{39}
         \end{equation}
         As with the previous theorem, let's draw the skeleton of $S_n(\mathcal{P}\cup B_4\cup B_5)$ first.

        \begin{figure}[h]
           \centering
           \includegraphics[scale=0.4]{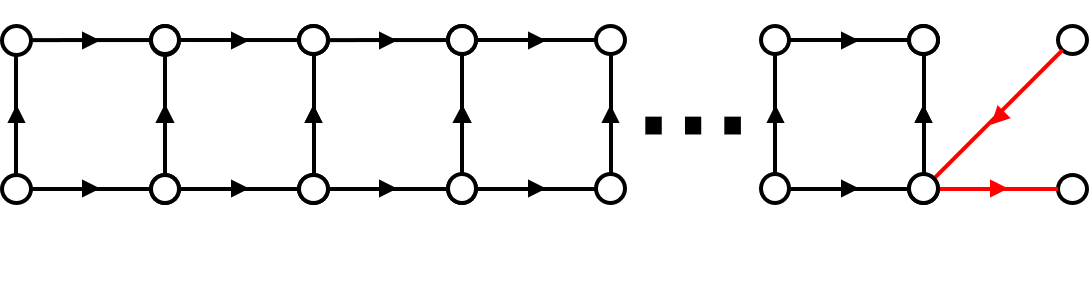}
           \caption{The skeleton of $S_n(\mathcal{P}\cup B_4\cup B_5)$ with $B_4$ or $B_5$}
           \label{fig:BDJL}
         \end{figure}

         From this picture, we can get
         \begin{equation}
            s_n(\mathcal{P}\cup B_4\cup B_5)
            =\sum_{k=1}^n(n+k-1)(n-k+2)t(n-1,k-1)+C(n)\label{48}
         \end{equation}
         Then, according to the equation \ref{39} and \ref{48}, we can get the following equation.
         \begin{equation}
             s_n(\mathcal{P}\cup B_5)
             =\sum_{k=1}^n(n+k-1)(n-k+2)t(n-1,k-1)
             -\binom{2n+1}{n}+2C(n)
         \end{equation}

         And then we can get the equations
     \end{proof}
     Furthermore, an argument similar to the Theorem \ref{4.2} and \ref{thm:4.6.} shows that 
     \begin{theorem} Let $\mathcal{Q}$ be $\{A_2\}$. Then,
        \begin{align*}
            &s_n(\mathcal{Q}\cup B_1)
            =\sum_{k=1}^{n-1}\binom{n-k+2}{2}t(n-2,k-1)+C(n-1)
            =\frac{1}{n+2}\binom{2n+2}{n+1}\\
            &s_n(\mathcal{Q}\cup B_2)
            =\sum_{k=1}^{n-1}(n-k+2)t(n-2,k-1)+C(n-1)
            =\frac{2}{n+1}\binom{2n}{n}\\
            &s_n(\mathcal{Q}\cup B_3)
            =C(n-1)+C(n-2)\\
            &s_n(\mathcal{Q}\cup B_4)
            =\sum_{k=1}^{n-1}(n+k-1)t(n-2,k-1)+C(n-1)
            =2\binom{2n-2}{n-1}\\
            &s_n(\mathcal{Q}\cup B_5)
            =\sum_{k=1}^{n-1}(n+k-1)(n-k+1)t(n-2,k-1)+C(n-1)
            =\frac{(2n^2+4)}{(n+1)(n+2)}\binom{2n}{n}\\
            &s_n(\mathcal{Q}\cup B_6)
            =\sum_{k=1}^{n-1}\binom{n+k}{2}t(n-2,k-1)+C(n-1)
            =\frac{n^2-n+4}{4n-2}\binom{2n+1}{n-1}
        \end{align*}
     \end{theorem}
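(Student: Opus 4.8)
The plan is to run the same two-stage argument used for Theorems~\ref{4.2} and~\ref{thm:4.6.}, with the single piece $\mathcal{Q}=\{A_2\}$ playing the role of $\{A_1,A_2,A_3\}$ and $\{A_2,A_3\}$. The additive term $C(n-1)$ in every line is the count of puzzles whose minimal support omits $B_i$, i.e. $s_n(\mathcal{Q})$, so the first step is to pin this number down. If every piece of a puzzle reduces to $A_2=\bigl[\begin{smallmatrix}3&4\\1&2\end{smallmatrix}\bigr]$, then reading the labels $1,2,\dots,2n+2$ and recording each bottom-row entry as an up-step and each top-row entry as a down-step produces a lattice path of semilength $n+1$; the per-piece condition $y_{i+1}<x_i$ forces this path to stay strictly above the axis until its very last step, i.e. an irreducible Dyck path, and these are counted by $C(n-1)$. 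This fixes $s_n(\mathcal{Q})=C(n-1)$.

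Next I would set up the refined enumeration exactly as in Theorem~\ref{k2}. Deleting the rightmost column of an $\{A_2\}$-puzzle and relabelling respecting the order gives a shorter $\{A_2\}$-puzzle, and tracking the value in the bottom-right corner yields the same summation recurrence $t(m,k)=\sum_{j\le k}t(m-1,j)$ that defines the Catalan triangle; consequently a length-$(n-1)$ $\{A_2\}$-puzzle with a prescribed bottom-right corner is counted by $t(n-2,k-1)$. This is the object that every displayed sum multiplies.

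The geometric heart of the argument is locating $B_i$. Each $B_i$ is a $1$-converter: its left column is ascending while its right column is descending, whereas both $A_2$ and $B_i$ have ascending left columns. Hence no piece can sit to the right of a converter, so any puzzle in $S_n(\mathcal{Q}\cup B_i)$ that genuinely uses $B_i$ must carry it in the rightmost column alone, which is the precise analogue of the last-column swap in the proof of Theorem~\ref{4.2}. Removing that last column leaves a length-$(n-1)$ $\{A_2\}$-puzzle with bottom-right corner indexed by $k$, counted by $t(n-2,k-1)$; the number of legal placements of the two new labels of the converter column, read off from the order relations of the particular $B_i$, supplies the weight polynomial ($\binom{n-k+2}{2}$, $n-k+2$, $n+k-1$, $(n+k-1)(n-k+1)$ or $\binom{n+k}{2}$, with $B_3$ degenerating to a single Catalan term $C(n-2)$). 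Summing over $k$ and adding the base term $C(n-1)$ produces the six displayed sums.

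Finally, substituting $t(n-2,k-1)=\frac{n-k}{\,n-1\,}\binom{n+k-3}{\,n-2\,}$ turns each sum into a single hypergeometric series, which the same automated summation (Gosper/Zeilberger) invoked for Theorems~\ref{4.2} and~\ref{thm:4.6.} collapses to the stated closed forms. I expect the main obstacle to lie in the combinatorial bookkeeping of the third step rather than in the summation: determining the correct weight polynomial for each converter requires a careful case analysis of the six skeletons (and, for $B_3$, recognising the degeneration), and an off-by-one error there is easy to commit and hard to detect, whereas the hypergeometric simplification is essentially mechanical once the sums are correct.
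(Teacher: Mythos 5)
Your proposal is correct and is essentially the argument the paper intends: the paper gives no explicit proof here, saying only that it follows by ``an argument similar to'' Theorems \ref{4.2} and \ref{thm:4.6.}, and your three steps --- (i) $s_n(\{A_2\})=C(n-1)$ via primitive Dyck paths, (ii) the refined count $t(n-2,k-1)$ of length-$(n-1)$ $\{A_2\}$-puzzles by bottom-right corner via the Theorem~\ref{k2} recurrence, (iii) forcing the converter into the last column and weighting the insertion of the two new labels, followed by mechanical hypergeometric summation --- reconstruct exactly that omitted argument, with correct conventions throughout (including $t(n-2,k-1)=\frac{n-k}{n-1}\binom{n+k-3}{n-2}$).

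One concrete caveat, precisely at the step you yourself flag as error-prone: in echoing the printed weight list you adopt $n-k+2$ for $B_2$, but a careful reading of the order relations of $B_2=\bigl[\begin{smallmatrix}3&2\\1&4\end{smallmatrix}\bigr]$ gives $y_n<x_{n+1}<x_n<y_{n+1}$, so the new top label has $n-k+1$ (not $n-k+2$) admissible slots between the old bottom-right and the old maximum, while the new bottom label is forced to be the global maximum. The correct intermediate sum is therefore $\sum_{k=1}^{n-1}(n-k+1)t(n-2,k-1)+C(n-1)$, which does match the stated closed form $\frac{2}{n+1}\binom{2n}{n}$; the printed sum with $n-k+2$ does not (at $n=2$ it gives $5$, whereas the direct count of $S_2(\{A_2,B_2\})$ is $4$). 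So this is an off-by-one typo in the theorem statement itself, which your method, executed as you describe it, would detect and repair rather than inherit.
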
     
     \begin{remark}
         Let $\mathcal{P}=\{A_2, A_3\}$ and $\mathcal{Q}=\{A_2\}$. Notice that
         
         $\forall i \in \{1, 2, 3, 4, 5, 6\}$, $\exists j \in \{1, 2, 3, 4, 5, 6\}$ such that 
          \begin{align*}
              &F_1\circ F_2 \circ F_3(\mathcal{P}\cup C_i)=\mathcal{P}\cup B_j\\
              &F_1\circ F_2 \circ F_3(\mathcal{Q}\cup C_i)=\mathcal{Q}\cup B_j\\
          \end{align*}
         That is, $s_n(\mathcal{P}\cup C_i)=s_n(\mathcal{P}\cup B_j)$ and $s_n(\mathcal{Q}\cup C_i)=s_n(\mathcal{Q}\cup B_j)$. Therefore, we can enumerate $s_n(\mathcal{P}\cup C_i)$ and $s_n(\mathcal{Q}\cup C_i)$ by the sequence $s_n(\mathcal{P}\cup B_j)$ and $s_n(\mathcal{Q}\cup B_j)$ which we have enumerated before.
     \end{remark}
     \subsection{Standard puzzles relating to Entringer numbers}
     In 1966, Entringer \cite{Entringer} enumerated the down–up permutations according to the first element called Entringer numbers. Additionally, in the past two decades, a great deal of mathematical effort has been devoted to the study of Entringer numbers \cite{B-Entringer,MB-Entringer}.

     In this subsection, we enumerate a class of standard puzzle sequence which is related to Entringer numbers. That is, the standard puzzles with $\{A_1, A_2, A_3, A_4, A_5\}$ uniting a converter as a support. With the proposition \ref{pr}, the standard puzzle sequence with $\{A_1, A_2, A_4, A_5, A_6\}$ uniting a converter as a support is the same as this condition. Before we state and prove the theorem, we need a lemma as follows.

     \begin{lemma}
         Let $\mathcal{P}$ be $\{A_1, A_2, A_3, A_4, A_5\}$. Then we have
         \begin{align}
             s_n(\mathcal{P}_i)&=E(2n+1,2n+2-i)\\
             s_n(\mathcal{P}^j)&=(j-1)E(2n,j-2)
         \end{align}
         where E(n,k) is the number of down-up permutations of n+1 starting with k+1, in OEIS \cite{OEIS} entry A008282.
     \end{lemma}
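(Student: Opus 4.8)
The plan is to build on the bijection established in the proof of Theorem~\ref{thm:Secant}. Recall that reading a puzzle of $S_n(\mathcal{P})$ from the top-left corner through the pattern south~$\rightarrow$~northeast~$\rightarrow$~south~$\rightarrow\cdots$ produces a down-up permutation $a_1a_2\cdots a_{2n+2}$ of $[2n+2]$, and that this correspondence is a bijection. Writing the two rows of the puzzle as $x_1\cdots x_{n+1}$ (top) and $y_1\cdots y_{n+1}$ (bottom), the reading order gives $a_{2k-1}=x_k$ and $a_{2k}=y_k$; in particular the bottom-right corner is $a_{2n+2}$ and the top-right corner is $a_{2n+1}$. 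Hence $s_n(\mathcal{P}_i)$ counts the down-up permutations of $[2n+2]$ whose last letter is $i$, and $s_n(\mathcal{P}^j)$ counts those whose second-to-last letter is $j$. The whole argument then reduces to counting down-up permutations with a prescribed letter in the last or second-to-last slot, which I would translate into the ``starting letter'' statistic that defines $E(n,k)$.

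For the first identity, I would apply the reverse-complement involution $a_1\cdots a_{2n+2}\mapsto b_1\cdots b_{2n+2}$ with $b_t=(2n+3)-a_{2n+3-t}$. Since $2n+2$ is even, a short sign check shows this map sends down-up permutations of $[2n+2]$ to down-up permutations of $[2n+2]$, and it carries the last letter $a_{2n+2}=i$ to the first letter $b_1=(2n+3)-i$. Therefore $s_n(\mathcal{P}_i)$ equals the number of down-up permutations of $[2n+2]$ that start with $2n+3-i$, which is exactly $E(2n+1,2n+2-i)$ by the definition of the Entringer numbers.

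For the second identity I would first peel off the final letter. Fix $a_{2n+1}=j$; the last letter $a_{2n+2}$ may be any of the $j-1$ values smaller than $j$, and deleting it and order-reducing $[2n+2]\setminus\{a_{2n+2}\}$ to $[2n+1]$ yields a down-up permutation of $[2n+1]$ whose last letter is $j-1$, independently of the value removed. This step is clearly invertible, so
\begin{equation*}
    s_n(\mathcal{P}^j)=(j-1)\cdot\#\{\text{down-up permutations of }[2n+1]\text{ ending in }j-1\}.
\end{equation*}
Finally, because $2n+1$ is odd, the plain reversal $a_1\cdots a_{2n+1}\mapsto a_{2n+1}\cdots a_1$ already preserves the down-up pattern and turns ``ending in $j-1$'' into ``starting with $j-1$''; the latter count is $E(2n,j-2)$, giving $s_n(\mathcal{P}^j)=(j-1)E(2n,j-2)$. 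The main obstacle is the parity bookkeeping: one must check that for even length the reverse alone gives an up-down permutation (so the complement is needed), whereas for odd length the reverse alone preserves down-up; getting these two involutions right, and matching the corner positions $a_{2n+1},a_{2n+2}$ to the correct slots, is the only delicate point, since everything else is a direct order-reduction bijection.
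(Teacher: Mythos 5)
Your proof is correct and takes exactly the route the paper intends: the paper omits the argument entirely, stating only that the lemma is ``straightforward'' from the bijection in the proof of Theorem~\ref{thm:Secant} together with the definition of the Entringer numbers, which is precisely what you carry out. Your parity bookkeeping checks out --- the reverse--complement involution for the even length $2n+2$, and the $(j-1)$-to-$1$ deletion of the last letter followed by plain reversal for the odd length $2n+1$ --- so you have in effect supplied the details the paper leaves to the reader.
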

     \begin{proof}
         By the definition of the Entringer numbers and the proof of Theorem \ref{thm:Secant}, It is straightforward to show this lemma. So the proof will be omitted.
     \end{proof}
     \begin{theorem}Let $\mathcal{P}$ be $\{A_1, A_2, A_3, A_4, A_5\}$. For $n\geq 2$, we have
         \begin{align}
             s_n(\mathcal{P}\cup B_1)&=
             s_n(\mathcal{P}\cup B_5)=
             s_n(\mathcal{P}\cup B_6)=\sum_{i=1}^{2n-2}\binom{i+3}{3}E(2n-2, i)+S(n+1)\\
             s_n(\mathcal{P}\cup B_2)&=s_n(\mathcal{P}\cup B_4)=\sum_{i=1}^{2n-2}(2n-1-i)\binom{i+2}{2}E(2n-2, i)+S(n+1)\\
             s_n(\mathcal{P}\cup B_3)&=\sum_{i=1}^{2n-2}(i+1)\binom{2n-i}{2}E(2n-2, i)+S(n+1)
         \end{align}\label{Thm:4.8.}
     \end{theorem}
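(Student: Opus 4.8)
The plan is to mirror the strategy of Theorems \ref{4.2} and \ref{thm:4.6.}: first peel off the puzzles in which the converter does not actually occur, then analyse the single puzzle in which it does. First I would argue that, because every piece of $\mathcal{P}=\{A_1,A_2,A_3,A_4,A_5\}$ lies in category $A$ (top larger than bottom in both columns) while $B_j$ is a $1$-converter (top larger than bottom on the left, smaller on the right), a piece $B_j$ can only occur as the \emph{rightmost} piece of a puzzle in $S_n(\mathcal{P}\cup B_j)$, and at most once. Hence $S_n(\mathcal{P}\cup B_j)$ splits into those puzzles whose minimal support omits $B_j$ — these are exactly $S_n(\mathcal{P})$, counted by $S(n+1)$ via Theorem \ref{thm:Secant} — and those whose last piece is $B_j$. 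This accounts for the additive $+S(n+1)$ in every formula, so it remains to count the second family.

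For the second family I would delete the rightmost column and relabel the surviving $2n$ entries to $\{1,\dots,2n\}$; the first $n-1$ pieces are unchanged category-$A$ pieces, so this yields a puzzle $\sigma\in S_{n-1}(\mathcal{P})$, and the construction is invertible once we record how the two deleted labels $p$ (top right) and $q$ (bottom right, with $p<q$) are re-inserted. Writing $t=i+2$ for the top-right value of $\sigma$, the skeleton of $S_n(\mathcal{P}\cup B_j)$ dictates how $p,q$ must rank against $t$: both below $t$ for $B_1,B_5,B_6$, one below and one above for $B_2,B_4$, and both above for $B_3$. Since $\sigma$ has $t-1=i+1$ labels below $t$ and $2n-i-2$ above it, the Lemma gives $s_{n-1}(\mathcal{P}^{i+2})=(i+1)E(2n-2,i)$ puzzles with a prescribed $t$, and a direct count handles the clean case $B_3$ at once: inserting both new labels above $t$ can be done in $\binom{2n-i}{2}$ ways, producing $\sum_i (i+1)\binom{2n-i}{2}E(2n-2,i)$.

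The remaining two regimes are where the real work lies. Summed over the whole regime the counts are again functions of $t$ only: the straddling regime contributes $(i+1)(i+2)(2n-1-i)E(2n-2,i)$ per value of $i$ (insert one label among the $i+2$ slots below $t$ and one among the $2n-i-1$ slots above), while the both-below regime contributes $(i+1)\binom{i+3}{2}E(2n-2,i)$. The claimed per-converter formulas are precisely the assertions that the straddling total splits into two equal halves, giving $(2n-1-i)\binom{i+2}{2}E(2n-2,i)$ for each of $B_2,B_4$, and that the both-below total splits into three equal thirds, giving $\binom{i+3}{3}E(2n-2,i)$ for each of $B_1,B_5,B_6$; one checks algebraically that $2(2n-1-i)\binom{i+2}{2}=(i+1)(i+2)(2n-1-i)$ and $3\binom{i+3}{3}=(i+1)\binom{i+3}{2}$, so the regime totals are consistent with the stated answers.

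The main obstacle is that these equal splits are \emph{not} visible puzzle-by-puzzle: the position of the existing bottom-right value $b$ of $\sigma$ relative to $p,q$ is what distinguishes $B_1$ from $B_5$ from $B_6$, and for a fixed $\sigma$ the three local insertion counts genuinely differ, so the equalities $s_n(\mathcal{P}\cup B_1)=s_n(\mathcal{P}\cup B_5)=s_n(\mathcal{P}\cup B_6)$ and $s_n(\mathcal{P}\cup B_2)=s_n(\mathcal{P}\cup B_4)$ only emerge after summation. I would therefore refine the bookkeeping to the joint statistic $(\text{top-right},\text{bottom-right})=(i+2,b)$, express the $b$-sum of the local insertion counts through the Entringer recurrence for $E$, and verify by the same computer-algebra simplification used in Theorems \ref{4.2} and \ref{4.3} that each individual $b$-sum collapses to the stated coefficient; alternatively one seeks a bijection on the puzzles of $S_{n-1}(\mathcal{P})$ with fixed top-right value that cyclically redistributes $b$ among the three relative positions, which would establish the three-way and two-way equalities directly and reduce the whole theorem to the single clean computation for $B_3$.
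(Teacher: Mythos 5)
Your plan is the argument the paper intends (its own ``proof'' is just an appeal to similarity with Theorem \ref{thm:4.6.}), and every step you actually execute checks out: $B_j$ can occur at most once and only as the rightmost piece, the $B_j$-free puzzles contribute $+S(n+1)$ by Theorem \ref{thm:Secant}, deleting the last column gives $\sigma\in S_{n-1}(\mathcal{P})$ with $(i+1)E(2n-2,i)$ choices for top-right value $t=i+2$, the $B_3$ count $\binom{2n-i}{2}$ is right, the regime totals $(i+1)(i+2)(2n-1-i)E(2n-2,i)$ and $(i+1)\binom{i+3}{2}E(2n-2,i)$ are right, and so are your consistency identities. The gap is in your last paragraph, where you declare the per-converter splits a ``main obstacle'' requiring either the Entringer recurrence plus computer algebra or a yet-to-be-found redistribution bijection. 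The missing idea is already available in the paper: the joint count of puzzles in $S_{n-1}(\mathcal{P})$ by (bottom-right, top-right) $=(b,t)$ is \emph{uniform in $b$}, equal to $E(2n-2,t-2)$ for every $b\in\{1,\dots,t-1\}$. This is exactly the appendix entry $P_8(i,j,m)=E(2m-2,i+j-2)$, which depends only on the top-right value $i+j$, and it is what the factor $(j-1)$ in the Entringer lemma's $s_n(\mathcal{P}^j)=(j-1)E(2n,j-2)$ encodes; it follows at once from the down-up reading of Theorem \ref{thm:Secant}, since deleting the final (bottom-right) entry of the down-up word and standardizing is a bijection onto down-up words of length $2n-1$ ending in $t-1$, independent of the deleted value $b$.

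With uniformity in hand, your refined bookkeeping closes in one line per converter, with no recurrence and no computer algebra. For fixed $(b,t)$ the insertion counts are $\binom{t-b+1}{2}$ for $B_1$ (pattern $b<p<q<t$), $b(t-b)$ for $B_5$ ($p<b<q<t$), $\binom{b+1}{2}$ for $B_6$ ($p<q<b<t$), $(t-b)(2n+1-t)$ for $B_2$ ($b<p<t<q$), and $b(2n+1-t)$ for $B_4$ ($p<b<t<q$), and the $b$-sums are hockey-stick identities: $\sum_{b=1}^{t-1}\binom{t-b+1}{2}=\sum_{b=1}^{t-1}b(t-b)=\sum_{b=1}^{t-1}\binom{b+1}{2}=\binom{t+1}{3}=\binom{i+3}{3}$, while $\sum_{b=1}^{t-1}(t-b)=\sum_{b=1}^{t-1}b=\binom{t}{2}=\binom{i+2}{2}$, giving $(2n-1-i)\binom{i+2}{2}$ for each of $B_2,B_4$; for $B_3$ the count $\binom{2n-i}{2}$ is independent of $b$, so the $t-1=i+1$ values of $b$ give $(i+1)\binom{2n-i}{2}$. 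So you are right that the equalities are invisible puzzle-by-puzzle and emerge only after summing over $b$, but that summation is elementary once the $b$-uniformity from the lemma (equivalently $P_8$ in Table \ref{appendix}) is invoked; aside from not invoking it, your proposal is the paper's proof.
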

     \begin{proof}
        The proof of this result is quite similar to the Theorem \ref{thm:4.6.} and so is omitted.
     \end{proof}
     \begin{remark}
         Let $\mathcal{P}=\{A_1, A_2, A_3, A_4, A_5\}$. Notice that
        
         $\forall i \in \{1, 2, 3, 4, 5, 6\}$, $\exists j \in \{1, 2, 3, 4, 5, 6\}$ such that 
         \begin{align*}
             &F_1\circ F_2 \circ F_3(\mathcal{P}\cup C_i)=\mathcal{P}\cup B_j\\
             &F_1\circ F_2 \circ F_3(\mathcal{Q}\cup C_i)=\mathcal{Q}\cup B_j\\
         \end{align*}
         That is, $s_n(\mathcal{P}\cup C_i)=s_n(\mathcal{P}\cup B_j)$. Therefore, we can enumerate $s_n(\mathcal{P}\cup C_i)$ by the sequence $s_n(\mathcal{P}\cup B_j)$ which we have enumerated in Theorem \ref{Thm:4.8.}.
     \end{remark}
     \subsection{A large class of Standard puzzles}
         \begin{theorem}
             Let $\mathcal{P}$, $\mathcal{Q}$ be the x-th and z-th simple pieces in the table \ref{appendix}(maybe the same simple piece), $\mathcal{T}$ be a standard piece $B_y$. Let $\overline{\mathcal{Q}}$ be the image of Q passing through the function $F_1 \circ F_2$ in definition \ref{Definition}.

             Then we have,
             \begin{equation}
                 s_n(\mathcal{P} \cup \mathcal{T} \cup\overline{\mathcal{Q}})
                 =\sum_{\substack{
                     i+j\leq 2m \\
                     k+l\leq 2p \\
                     m + p = n + 1}} 
                     P_x(i,j,m)\cdot T_y(i,j,k,l,m,p)\cdot P_z(k,l,p)+s_n(\mathcal{P})+s_n(\mathcal{Q})\label{eq:66}
             \end{equation}
             in which $i, j, k, l, m, p \in \mathbb{Z}_+ $,  $P_x(i, j, m)$ denotes the x-th standard puzzle of length m after given the rightmost two digits $i$ and $i + j$. The same thing with the definition of $P_z(k, l, p)$.\label{The Theorem}
         \end{theorem}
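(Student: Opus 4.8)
The plan is to prove \eqref{eq:66} by a decomposition at the converter, in the same spirit as the proofs of Theorems \ref{4.2} and \ref{thm:4.6.}, but now allowing the two flanking blocks to come from different simple pieces. First I would set up the orientation bookkeeping. Each piece of a $1$-simple piece $\mathcal{P}$ (and of $\mathcal{Q}$) lies in category $A$, so both of its columns are oriented the same way, say \emph{up}; applying $F_1\circ F_2$ to $\mathcal{Q}$ sends category $A$ to category $D$, so every piece of $\overline{\mathcal{Q}}$ has both columns oriented \emph{down}; and $\mathcal{T}=B_y$ is a $1$-converter, whose left column is up while its right column is down. Because a $1$-converter turns an up column into a down column and no $2$-converter is available in the support to turn it back, a single row can contain at most one converter (the fact already used for Theorems \ref{4.2} and \ref{thm:4.6.}). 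A converter-free puzzle therefore keeps one fixed orientation throughout, so it lies entirely in $S_n(\mathcal{P})$ or entirely in $S_n(\overline{\mathcal{Q}})$; these families are disjoint for $n\geq 1$, and $s_n(\overline{\mathcal{Q}})=s_n(\mathcal{Q})$ by Proposition \ref{pr}. This is precisely the summand $s_n(\mathcal{P})+s_n(\mathcal{Q})$.

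Next I would treat the puzzles containing exactly one converter by cutting the row at that unique $B_y$. Reading left to right, the pieces strictly to the left of the converter together with the converter's left column form an up--up puzzle, hence a $\mathcal{P}$-puzzle, while the pieces to the right together with the converter's right column form a down--down puzzle, hence a $\overline{\mathcal{Q}}$-puzzle. If the left block occupies $m$ columns and the right block $p$ columns, then among the $n+1$ columns of the whole row we have $m+p=n+1$. The left block is counted by $P_x(i,j,m)$, its interface (rightmost) column carrying the values $i$ and $i+j$. For the right block I would invoke the bijection of Proposition \ref{pr}: since $F_1$ reverses the left--right order of the columns, the bijection carrying $S_p(\overline{\mathcal{Q}})$ onto $S_p(\mathcal{Q})$ sends the interface (leftmost) column of the right block to the rightmost column of a $\mathcal{Q}$-puzzle, so the right block is counted by $P_z(k,l,p)$ with interface values $k$ and $k+l$.

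The heart of the argument is that, once the combinatorial types of the two blocks and the converter are fixed, rebuilding a global puzzle of $S_n(\mathcal{P}\cup\mathcal{T}\cup\overline{\mathcal{Q}})$ is the same as choosing how the $2m$ labels used on the left and the $2p$ labels used on the right interleave inside $\{1,\dots,2n+2\}$. Indeed every piece internal to a block is determined by the \emph{relative} order of that block's labels, which any order-preserving interleaving preserves, so the only genuine cross-block constraint is the linear order that $B_y$ forces on its four corners, namely the interface values $i,i+j$ on the left and $k,k+l$ on the right. I would accordingly define $T_y(i,j,k,l,m,p)$ to be the number of interleavings of the two label sets that respect this prescribed order of the four corners, and exhibit it as a product of binomial coefficients obtained by dropping the remaining left and right labels into the runs delimited by the four corners. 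Multiplying $P_x(i,j,m)\,T_y(i,j,k,l,m,p)\,P_z(k,l,p)$, summing over all admissible $i,j,k,l$ (whence $i+j\leq 2m$ and $k+l\leq 2p$) and over $m+p=n+1$, and adding the converter-free contribution, gives \eqref{eq:66}.

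The step I expect to be the main obstacle is the explicit description of $T_y$. The order imposed on the four corners is different for each of the six converters $B_1,\dots,B_6$, and in each case one must check that the interface values forced by $P_x$ and $P_z$ can actually be arranged in that order, and that the number of admissible interleavings depends on $i,j,k,l,m,p$ alone and factors cleanly into binomials. A secondary, more routine, point is the boundary bookkeeping when $m=1$ or $p=1$, where a flanking block degenerates to a single column; there one verifies $P_x(i,j,1)=P_z(k,l,1)=1$ so that the endpoints of the sum behave correctly.
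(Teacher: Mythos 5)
Your proposal is correct and follows essentially the same route as the paper: since no $C_i$ is available, the column orientation can flip at most once, giving the converter-free contributions $s_n(\mathcal{P})+s_n(\overline{\mathcal{Q}})=s_n(\mathcal{P})+s_n(\mathcal{Q})$, and cutting at the unique $B_y$ reduces the rest to counting interleavings of the two label sets subject only to the order $B_y$ imposes on its four corners, which is exactly the paper's auxiliary lemma defining $T_y$ via $Q_1,Q_2,Q_3$. The only cosmetic differences are that the paper treats $y\in\{1,2,3\}$ and obtains $y\in\{4,5,6\}$ by applying $F_1\circ F_2$ to the whole puzzle (giving $T_{y+3}(i,j,k,l,m,p)=Q_y(k,l,i,j,p,m)$) rather than checking six corner orders directly, and its explicit $T_y$ comes out as a sum of products of binomial coefficients (summing over the possible values of the corner order statistics), not the single product your sketch suggests.
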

         To prove the theorem, we need a lemma.
         \begin{lemma}
             Let $N=\{1, 2, \dots, 2n+2\}$, and partition the set N into two parts A and B. Sort the elements in A and B from the smallest to the largest. Given the positive integers i, j, k, l, m, p, satisfying $i+j\leq 2m, k+l\leq 2p$ and $m+p=n+1$. $A=\{a_1, a_2, \dots, a_{2m}\}$ and $B=\{b_1, b_2, \dots, b_{2p}\}$.
             
             Then,

             1.The number of ways to partition the set N into A and B satisfying
             $a_i < b_k < b_{k+l} < a_{i+j}$ is $Q_1(i, j, k, l, m, p)$
             which is,
             \begin{equation}
                 Q_1(i,j,k,l,m,p)=
                 \sum_{\substack{
                 0\leq a\leq k-1\\
                 0\leq b\leq j-1}}
                 \binom{i+a-1}{a}
                 \binom{b+k+l-a-1}{b}
                 \binom{2m-i-b+2p-k-l}{2p-k-l}
             \end{equation}

             2.The number of ways to partition the set N into A and B satisfying
             $a_i < b_k < a_{i+j} < b_{k+l}$ is $Q_2(i, j, k, l, m, p)$
             which is,
             \begin{equation}
                 Q_2(i,j,k,l,m,p)=
                 \sum_{\substack{
                 0\leq a\leq k-1\\
                 0\leq b\leq j-1\\
                 0\leq c\leq l-1}}
                 \binom{i+a-1}{a}
                 \binom{b+k-a-1}{b}
                 \binom{c+j-b-1}{c}
                 \binom{2m+2p-k-c-i-j}{2m-i-j}
                 \label{Q}
             \end{equation}

             3.The number of ways to partition the set N into A and B satisfying
             $a_i < a_{i+j} < b_k < b_{k+l}$ is $Q_3(i, j, k, l, m, p)$
             which is,
             \begin{equation}
                Q_3(i,j,k,l,m,p)=
                 \sum_{a=0}^{k-1}
                 \binom{i+j+a-1}{a}
                 \binom{2m+2p-i-j-a}{2p-a}
             \end{equation}
         \end{lemma}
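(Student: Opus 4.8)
The plan is to prove all three counts by a single device: regard a partition of $N=\{1,\dots,2n+2\}$ into $A$ and $B$ as a two-coloring of the points $1,2,\dots,2n+2$, where a point is colored $A$ or $B$ according to the part containing it, so that $a_1<\cdots<a_{2m}$ and $b_1<\cdots<b_{2p}$ are exactly the $A$-colored and $B$-colored points read from left to right. In each part the four order statistics $a_i,\,a_{i+j},\,b_k,\,b_{k+l}$ serve as \emph{pivots} whose mutual order is prescribed, and they cut the interval $[1,2n+2]$ into consecutive blocks. I would count a coloring by deciding, block by block, how many points of each color it contains and how the two colors interleave; each block then contributes a single binomial coefficient, and summing the product over the free block-sizes yields the stated formula.

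For Part 1 (the order $a_i<b_k<b_{k+l}<a_{i+j}$) I would introduce two auxiliary parameters: $a$, the number of $B$-points lying below $a_i$, and $b$, the number of $A$-points lying strictly between $a_i$ and $b_{k+l}$. Because $b_k$ must sit above $a_i$, the $B$-points below $a_i$ are among $b_1,\dots,b_{k-1}$, forcing $0\le a\le k-1$; because $a_{i+j}$ must sit above $b_{k+l}$, the $A$-points between $a_i$ and $b_{k+l}$ are among $a_{i+1},\dots,a_{i+j-1}$, forcing $0\le b\le j-1$. The block below $a_i$ holds $i-1$ points of color $A$ and $a$ of color $B$, giving $\binom{i+a-1}{a}$ interleavings; the block $(a_i,b_{k+l}]$ holds $b$ points of color $A$ and $k+l-1-a$ points of color $B$ followed by $b_{k+l}$ itself, giving $\binom{b+k+l-a-1}{b}$; and the block above $b_{k+l}$ holds the remaining $2m-i-b$ $A$-points and $2p-k-l$ $B$-points, giving the last binomial. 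These ranges encode precisely $a_i<b_k$ and $b_{k+l}<a_{i+j}$ (with $b_k<b_{k+l}$ automatic), so summing the product over admissible $a,b$ returns $Q_1$.

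Parts 2 and 3 follow the identical recipe with a different pivot order. For Part 2 ($a_i<b_k<a_{i+j}<b_{k+l}$) there are three interior gaps, so I would use three parameters $a,b,c$ counting respectively the $B$-points below $a_i$, the $A$-points between $a_i$ and $b_k$, and the $B$-points between $b_k$ and $a_{i+j}$; the four resulting blocks produce the four binomials of $Q_2$, with ranges $0\le a\le k-1$, $0\le b\le j-1$, $0\le c\le l-1$ dictated respectively by $a_i<b_k$, $b_k<a_{i+j}$, and $a_{i+j}<b_{k+l}$. For Part 3 ($a_i<a_{i+j}<b_k<b_{k+l}$) both $A$-pivots precede both $B$-pivots, so a single parameter $a$, the number of $B$-points below $a_{i+j}$, suffices: the block up to and including $a_{i+j}$ contributes $\binom{i+j+a-1}{a}$ and the block above it contributes $\binom{2m+2p-i-j-a}{2p-a}$. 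Since $b_{k+l}>b_k>a_{i+j}$ is then automatic, $l$ drops out of $Q_3$, which matches the stated formula.

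The routine verifications are the block-by-block color counts; the one place demanding care is the claim that the prescribed ranges of the auxiliary parameters capture the pivot inequalities \emph{exactly}. I would settle this by checking that each extreme value of a parameter corresponds to a pivot coinciding with a block boundary (for instance, $a=k-1$ is the case where $b_{k-1}$ is the largest $B$-point below $a_i$), and conversely that every admissible coloring is recovered by exactly one parameter tuple, so that the assignment coloring $\mapsto$ (parameters, within-block interleavings) is a bijection onto the indicated summation domain. Establishing this bijection cleanly is the main obstacle, as it is what rules out over- and undercounting and makes the three sums valid.
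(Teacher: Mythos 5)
Your proposal is correct and takes essentially the same approach as the paper: the paper likewise parametrizes each partition by the positions of the pivots (setting $a=a_i-i$, $b=b_k-i-k$, $c=a_{i+j}-i-j-k$ for $Q_2$, which are exactly your gap counts of opposite-color points between consecutive pivots) and multiplies the binomial interleaving counts of the resulting blocks. The only difference is presentational --- the paper works out $Q_2$ in detail and declares the other two cases similar, whereas you spell out all three parts and the bijectivity of the parametrization.
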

         \begin{proof}
             The proofs of these three formulas are similar, and we will only deal here with the proofs of equation \ref{Q} , which is as follows. 
             
             We consider the number $a_i$, $b_k$ and $a_{i+j}$. The number $a_i$ has $k$ choices $i, i+1, \dots, i+k-1$ and then $b_k$ has $j$ choices $i+k, i+k+1, \dots, i+j+k-1$ and $a_{i+j}$ has $l$ choices $i+j+k, i+j+k+1, \dots, i+j+k+l-1$. If each of these conditions is not true, the inequality will not be true. Let $a= a_i -i$, $b=b_k-i-k$, $c=a_{i+j}-i-j-k$, and then we enumerate the number of partitions meeting these conditions and get the equation.
             
         \end{proof}

             
         \paragraph{Proof of Theorem \ref{The Theorem}}
             \begin{proof}
                 Notice that $C_\alpha \notin \mathcal{P} \cup \mathcal{T} \cup\overline{\mathcal{Q}}$, for any $\alpha\in\{1, 2, 3, 4, 5, 6\}$. Then for a standard puzzle of $S_n(\mathcal{P} \cup \mathcal{T} \cup\overline{\mathcal{Q}})$ if the bottom number in one column is greater than the top number, then the bottom number in each subsequent column is greater than the top number. Therefore, the puzzle has three possible situations.

                 (1) The number at the top of each column is greater than the number at the bottom. Then its an element of $S_n(\mathcal{P})$.

                 (2) The numbers at the bottom of each column are greater than the numbers at the top. Then its an element of $S_n(\overline{\mathcal{Q}})$.

                 (3) There exists a number $m$ $(1\leq m\leq n)$. Then in the first m column(s), the top number is larger than the bottom number and the subsequent columns are the opposite. 

                 For the situation (1) and (2), it is a puzzle of $S_n(\mathcal{P})$ and $S_n(\overline{\mathcal{Q}})$. Then we just have to think about the situation (3). In this situation, we can cut this puzzle between the m-th column and the (m+1)-th column, and get two simple puzzle of $S_m(\mathcal{P})$ and $S_p(\mathcal{\overline{Q}})$($p=n+1-m$). Besides, for two simple puzzle P and $\overline{Q}$ of $S_m(\mathcal{P})$ and $S_p(\mathcal{\overline{Q}})$, let i and i+j be the number in the lower and top right corner of P, k and k+l be the number in the top and lower left corner of $\overline{Q}$, respectively.
                
                 we can get a partition of $\{1, 2, \dots, 2n+2\}$ into two parts A and B. $A=\{a_1, a_2, \dots, a_{2m}\}$ and $B=\{b_1, b_2, \dots, b_{2p}\}$ are sorted from the smallest to the largest. For given i, j, k, l, we let $a_i < b_k < b_{k+l} < a_{i+j}$ and replace the number $\alpha$ in P and $\beta$ in $\overline{Q}$ by $a_\alpha$ and $b_\beta$, respectively. Then we get a puzzle $\mathcal{P} \cup B_1 \cup\overline{\mathcal{Q}}$. Sum over all $i, j, k, l$ that satisfy $i+j\leq 2m$ and $k+l\leq2p$ and the equation \ref{eq:66} is set up for $i=1$ with $T_1(i, j, k, l, m, p)=Q_1(i, j, k, l, m, p)$. Analogously, for $y\in\{1, 2, 3\}$ the equation \ref{eq:66} holds and we have,
                 \begin{equation}
                     T_y(i, j, k, l, m, p)=Q_y(i, j, k, l, m, p)
                     \label{y123}
                 \end{equation} 
                 for $y\in\{1, 2, 3\}$. 
                 
                 For $\mathcal{T}=B_y$, $y\in\{4, 5, 6\}$, we use the proposition \ref{pr} about the map $F_1$ and $F_2$ in definition \ref{Definition} and get the equation 
                 \begin{equation}
                     s_n(\mathcal{P} \cup \mathcal{T} \cup\overline{\mathcal{Q}})=s_n(F_1 \circ F_2(\mathcal{P} \cup \mathcal{T} \cup\overline{\mathcal{Q}}))=s_n(\mathcal{\overline{P}} \cup \mathcal{\overline{T}} \cup\mathcal{Q})
                 \end{equation}
                 where $\mathcal{\overline{T}}=B_{y-3}$ for $y\in\{4, 5, 6\}$. We reuse the equations \ref{eq:66} and \ref{y123} for $y\in\{1, 2, 3\}$, the equation \ref{eq:66} is proved for all $i\in\{1, 2, 3, 4, 5, 6\}$. And for $i\in\{1, 2, 3\}$ we have,
                 \begin{equation}
                     T_{y+3}(i, j, k, l, m, p)=Q_y(k, l, i, j, p, m)
                 \end{equation}
                 Hence the proof is completed.
             \end{proof}
             Additionally, we list all the $P_x(i, j, m)$ $(1\leq x\leq 20, x\in \mathbb{Z})$ in the appendix (table \ref{appendix}). For $x\in\{1, 2, 3, 15, 16\}$, the conclusions are trivial and for $x\in\{5, 6, 11, 14\}$ they are corollaries of theorem \ref{theorem}. For $x\in \{17, 18, 19, 20\}$ and $x\in\{4, 7, 12, 13\}$ they are corollaries of theorem \ref{A123_} and theorem \ref{k2} respectively. When $x=8$ and $x=9$, we can get the function by the proof of theorem \ref{thm:Secant} and the definition of the Euler-Bernoulli or Entringer numbers in OEIS \cite{OEIS} entry A008282.

             Using the Theorem \ref{The Theorem} and the proposition \ref{pr}, we have the following corollary.
             \begin{corollary}
                 Let $\mathcal{P}$, $\mathcal{Q}$ be the x-th and z-th simple pieces in the table \ref{appendix}(maybe the same simple piece), $\mathcal{T}'$ be a standard piece $C_y$. Let $\mathcal{Q}'$ be the image of Q passing through the function $F_2$ in definition \ref{Definition}.

                 Then we have,
                 \begin{equation}
                     s_n(\mathcal{P}' \cup \mathcal{T}' \cup\mathcal{\overline{Q'}})
                     =\sum_{\substack{
                         i+j\leq 2m \\
                         k+l\leq 2p \\
                         m + p = n + 1}} 
                         P_x(i,j,m)\cdot T_y(i,j,k,l,m,p)\cdot P_z(k,l,p)+s_n(\mathcal{P})+s_n(\mathcal{Q})
                 \end{equation}
                 in which $i, j, k, l, m, p \in \mathbb{Z}_+ $,  $P_x(i, j, m)$ denotes the x-th standard puzzle of length m after given the rightmost two digits $i$ and $i + j$. The same thing with the definition of $P_z(k, l, p)$.\label{The Corollary}
                 \end{corollary}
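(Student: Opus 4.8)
The plan is to obtain the corollary from Theorem \ref{The Theorem} with no fresh counting at all, by pushing the entire support set through one of Han's transformations and invoking Proposition \ref{pr}. The decisive observation is that the map $F_2$ of Definition \ref{Definition} sends a $2$-converter to the \emph{same-indexed} $1$-converter, $F_2(\mathcal{T}')=F_2(C_y)=B_y=\mathcal{T}$, and that $F_2$ is an involution on $\mathcal{SP}$ (the rule $A_i\leftrightarrow D_i$, $B_i\leftrightarrow C_i$ gives $F_2\circ F_2=\mathrm{id}$). Thus applying $F_2$ simultaneously converts the $C_y$ piece of the corollary into the $B_y$ piece of the theorem and undoes the primes introduced by the corollary, so that the whole configuration is carried onto the one already handled.

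First I would record the one algebraic fact the reduction rests on: $F_1$ and $F_2$ commute. This is a routine check over the four families; for instance $F_1F_2(A_i)=F_1(D_i)=D_{i+3}=F_2(A_{i+3})=F_2F_1(A_i)$, and the cases $B_i,C_i,D_i$ are identical in spirit. Writing $\mathcal{P}'=F_2(\mathcal{P})$ and $\mathcal{Q}'=F_2(\mathcal{Q})$, and recalling that the bar denotes $F_1\circ F_2$, I would then compute the image of each of the three summands under $F_2$:
\begin{align*}
F_2(\mathcal{P}')&=F_2\circ F_2(\mathcal{P})=\mathcal{P},\\
F_2(\mathcal{T}')&=F_2(C_y)=B_y=\mathcal{T},\\
F_2(\overline{\mathcal{Q}'})&=F_2\circ F_1\circ F_2(\mathcal{Q}')=F_1\circ F_2\circ F_2(\mathcal{Q}')=F_1(\mathcal{Q}')=F_1\circ F_2(\mathcal{Q})=\overline{\mathcal{Q}},
\end{align*}
where the second equality in the last line uses the commutation just established. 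Since $F_2$ is a bijection of $\mathcal{SP}$, it respects unions, so these three identities give $F_2\bigl(\mathcal{P}'\cup\mathcal{T}'\cup\overline{\mathcal{Q}'}\bigr)=\mathcal{P}\cup\mathcal{T}\cup\overline{\mathcal{Q}}$.

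With this the argument closes in a single step. Proposition \ref{pr} yields $s_n(\mathcal{P}'\cup\mathcal{T}'\cup\overline{\mathcal{Q}'})=s_n(\mathcal{P}\cup\mathcal{T}\cup\overline{\mathcal{Q}})$, and Theorem \ref{The Theorem} evaluates the right-hand side as the claimed triple sum plus $s_n(\mathcal{P})+s_n(\mathcal{Q})$; applying Proposition \ref{pr} to $F_2$ alone also gives $s_n(\mathcal{P})=s_n(\mathcal{P}')$ and $s_n(\mathcal{Q})=s_n(\mathcal{Q}')$, so the two correction terms agree. Because the corollary's right-hand side is literally the theorem's right-hand side (the subscript $y$ is preserved, so the weight $T_y$ is unchanged, and $P_x,P_z$ are unchanged since $F_2$ recovers the shapes of $\mathcal{P}$ and $\mathcal{Q}$), nothing further is needed. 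The one point I would treat as the main obstacle is verifying that $F_2$ transports the barred summand $\overline{\mathcal{Q}'}$ onto $\overline{\mathcal{Q}}$: this is exactly where $F_1F_2=F_2F_1$ is used, and once it is in place the corollary is a one-line consequence of the theorem.
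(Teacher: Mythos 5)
Your proposal is correct and is essentially the paper's own argument: the paper gives no separate proof, stating only that the corollary follows from Theorem \ref{The Theorem} together with Proposition \ref{pr}, which is exactly the reduction you carry out (and the same $F_1\circ F_2$ trick already appears inside the theorem's proof for $y\in\{4,5,6\}$). You merely make explicit the details the paper leaves implicit --- that $F_2$ is an involution, that $F_1F_2=F_2F_1$, and hence that $F_2$ carries $\mathcal{P}'\cup C_y\cup\overline{\mathcal{Q}'}$ onto $\mathcal{P}\cup B_y\cup\overline{\mathcal{Q}}$ with the index $y$ (and thus the weight $T_y$) preserved.
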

 \section{Identities in standard puzzle sequences}\label{Se:5}
     \begin{theorem}
         Let $\mathcal{P}_i \in \{A_i, B_i\}$, $\mathcal{Q}_i \in \{C_i, D_i\}$ and $\alpha$ be a subset of \{1, 2, 3, 4, 5, 6\}. 
         
         So, we have
         \begin{equation}
            s_n\bigl(\bigl(\bigcup_{i\in\alpha}\mathcal{P}_i\bigr)\bigcup\bigl(\bigcup_{i\in\alpha}\mathcal{Q}_i\bigr)\bigr)=2s_n\bigl(\bigcup_{i\in\alpha} A_i\bigr)
         \end{equation}
         \label{Theorem 5.1}
     \end{theorem}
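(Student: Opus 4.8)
The plan is to reduce every puzzle with support $\mathcal{S} := \left(\bigcup_{i\in\alpha}\mathcal{P}_i\right)\cup\left(\bigcup_{i\in\alpha}\mathcal{Q}_i\right)$ to a puzzle with support $\mathcal{A} := \bigcup_{i\in\alpha}A_i$ by straightening out its columns, and to show this reduction is two-to-one. First I would record the structural lemma that drives everything, stated in terms of the flip map $\phi_S$ introduced in the proof of Theorem \ref{fibonacci}: flipping (swapping top and bottom of) a single column of a piece preserves its index $i$ and toggles only the relation in that column. Concretely, flipping the left column interchanges $A_i\leftrightarrow C_i$ and $B_i\leftrightarrow D_i$, flipping the right column interchanges $A_i\leftrightarrow B_i$ and $C_i\leftrightarrow D_i$, and flipping both is exactly $F_2$, interchanging $A_i\leftrightarrow D_i$ and $B_i\leftrightarrow C_i$. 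Equivalently, the two single-column flips, together with their composite $F_2$ and the identity, form a Klein four-group acting simply transitively on each index-class $\{A_i,B_i,C_i,D_i\}$; this can be read off Table \ref{1} directly.

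Call a column \emph{top-dominant} ($+$) if its top entry exceeds its bottom entry and \emph{bottom-dominant} ($-$) otherwise. Since $\mathcal{P}_i\in\{A_i,B_i\}$ (both top-dominant on the left) and $\mathcal{Q}_i\in\{C_i,D_i\}$ (both bottom-dominant on the left), the support $\mathcal{S}$ contains, for each index $i\in\alpha$, exactly one piece whose left column is $+$ and exactly one whose left column is $-$. I would then define $\Psi:S_n(\mathcal{S})\to S_n(\mathcal{A})$ by applying $\phi_S$ with $S$ the set of all bottom-dominant columns of the puzzle. By the lemma, flipping every $-$ column makes every column $+$, so each piece becomes an $A$-piece, and because the flips preserve indices each such piece $A_{i_k}$ retains its original index $i_k\in\alpha$; hence $\Psi(\Pi)\in S_n(\mathcal{A})$, and $\Psi$ is well-defined since $\phi_S$ always returns a legitimate standard puzzle.

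The heart of the argument is the fiber count. Fix $\Pi'\in S_n(\mathcal{A})$, which is all $+$ with pieces $A_{i_1},\dots,A_{i_n}$. As $\phi_S$ is an involution, a preimage of $\Pi'$ is exactly a column-subset $S$ with $\phi_S(\Pi')\in S_n(\mathcal{S})$ (the $-$ columns of $\phi_S(\Pi')$ are then automatically $S$). Writing $s_0,\dots,s_n$ for the resulting column states, membership in $S_n(\mathcal{S})$ forces, for each piece, the unique category of index $i_k$ that $\mathcal{S}$ admits: if $s_{k-1}=+$ the piece must be $\mathcal{P}_{i_k}$, so $s_k=+$ precisely when $\mathcal{P}_{i_k}=A_{i_k}$; if $s_{k-1}=-$ the piece must be $\mathcal{Q}_{i_k}$, so $s_k=+$ precisely when $\mathcal{Q}_{i_k}=C_{i_k}$. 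This is a deterministic left-to-right transition rule, so the entire state sequence, and hence $S$, is determined by the single choice $s_0\in\{+,-\}$; conversely each of the two choices yields an admissible preimage, and they differ in the leftmost column, so they are distinct. Thus every $\Pi'$ has exactly two preimages, giving $s_n(\mathcal{S})=2\,s_n(\mathcal{A})$.

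The main obstacle I anticipate is making the fiber count airtight: one must confirm that the ``allowed-category'' constraint really is a well-posed recursion that never stalls, which rests on $\mathcal{P}_i$ and $\mathcal{Q}_i$ each being a singleton (so that for every incoming state there is a unique index-$i$ piece of $\mathcal{S}$ with that left-relation), and that both outputs genuinely lie in $S_n(\mathcal{S})$ rather than merely exhibiting a consistent state pattern. The index-preservation lemma, though routine to check against Table \ref{1}, is exactly what allows the indices to survive the flattening and must be stated cleanly at the outset.
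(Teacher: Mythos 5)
Your proof is correct and takes essentially the same route as the paper: the paper also normalizes column relations by flips (its ``turn the left-hand column upside down'' scan is your deterministic state recursion, and its use of $F_2$ to write $2s_n\bigl(\bigcup_{i\in\alpha}A_i\bigr)=s_n\bigl(\bigcup_{i\in\alpha}A_i\bigr)+s_n\bigl(\bigcup_{i\in\alpha}D_i\bigr)$ corresponds exactly to your two choices of the initial state $s_0$). Your explicit two-to-one fiber count, with the Klein four-group flip lemma stated up front, merely makes precise the invertibility that the paper asserts without detail.
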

     \begin{proof}
         By the proposition \ref{pr} in Section \ref{Se:2}, we can know that 
         \begin{equation*}
             2s_n\bigl(\bigcup_{i\in\alpha} A_i\bigr)
             =s_n\bigl(\bigcup_{i\in\alpha} A_i\bigr)
             +s_n\bigl(\bigcup_{i\in\alpha} D_i\bigr)
         \end{equation*}
         Besides $A_i\in\{A_i, B_i\}$ and $D_i\in\{C_i, D_i\}$. So we only need to show that\\ $s_n\bigl(\bigl(\bigcup_{i\in\alpha}\mathcal{P}_i\bigr)\bigcup\bigl(\bigcup_{i\in\alpha}\mathcal{Q}_i\bigr)\bigr)$ are equal for any $\alpha\subset \{1, 2, 3, 4, 5, 6\}$, and any $\mathcal{P}_i \in \{A_i, B_i\}$, $\mathcal{Q}_i \in \{C_i, D_i\}$.

         Notice that the right-hand column of $\{A_i, B_i\}$(resp. $\{C_i, D_i\}$)is the same. 
         Then we check the reduction of each piece of a puzzle in $S_n\bigl(\bigl(\bigcup_{i\in\alpha}A_i\bigr)\bigcup\bigl(\bigcup_{i\in\alpha}D_i\bigr)\bigr)$ from right to left. If the piece does not belong to $\bigl(\bigcup_{i\in\alpha}\mathcal{P}_i\bigr)\bigcup\bigl(\bigcup_{i\in\alpha}\mathcal{Q}_i\bigr)$ as a standard puzzle, we turn the left-hand column of this pieces upside down and continue to retrieve until the research is complete. Because of this operation can swap $A_i$ with $B_i$, and swap $C_i$ with $D_i$, we can get a puzzle in $\bigl(\bigcup_{i\in\alpha}\mathcal{P}_i\bigr)\bigcup\bigl(\bigcup_{i\in\alpha}\mathcal{Q}_i\bigr)$. Besides, this mapping is easily seen to be invertible. Hence the proof is completed.


     \end{proof}
     \begin{remark}
         In fact, this idea is inspired by a proof in \cite{Han}.
     \end{remark}
     According to this theorem \ref{Theorem 5.1} and using the proposition \ref{pr} about the map $F_2$, we can get the following corollary easily.
     \begin{corollary}
        Let $\mathcal{P}_i \in \{A_i, C_i\}$, $\mathcal{Q}_i \in \{B_i, D_i\}$ and $\alpha$ be a subset of \{1, 2, 3, 4, 5, 6\}. 
        
        So, we have\label{corollary 5.1.}
        \begin{equation}
           s_n\bigl(\bigl(\bigcup_{i\in\alpha}\mathcal{P}_i\bigr)\bigcup\bigl(\bigcup_{i\in\alpha}\mathcal{Q}_i\bigr)\bigr)=2s_n\bigl(\bigcup_{i\in\alpha} A_i\bigr)
        \end{equation}
     \end{corollary}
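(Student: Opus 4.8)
The plan is to deduce this corollary from Theorem~\ref{Theorem 5.1} by transporting the entire problem through one of Guo-Niu Han's transformations, exactly as Proposition~\ref{pr} permits. The point is that Theorem~\ref{Theorem 5.1} is stated in terms of the pairs $\{A_i,B_i\}$ and $\{C_i,D_i\}$, whereas the corollary is stated in terms of the pairs $\{A_i,C_i\}$ and $\{B_i,D_i\}$; these two groupings differ precisely by interchanging the roles of the $B$- and $C$-categories. Among the maps of Definition~\ref{Definition}, the one that swaps $B\leftrightarrow C$ while preserving the $A$- and $D$-categories (up to the index shift $i\mapsto i+3$) is $F_1$, the transformation (T1) that exchanges the two columns of every piece: $F_1(A_i)=A_{i+3}$, $F_1(B_i)=C_{i+3}$, $F_1(C_i)=B_{i+3}$, $F_1(D_i)=D_{i+3}$. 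Note $F_1$ is an involution since $i\mapsto i+3$ is one modulo $6$, so it carries the corollary's pairs $\{A_i,C_i\},\{B_i,D_i\}$ onto the theorem's pairs $\{A_i,B_i\},\{C_i,D_i\}$ and back.

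First I would fix the corollary data: a set $\alpha\subseteq\{1,\dots,6\}$ together with choices $\mathcal{P}_i\in\{A_i,C_i\}$ and $\mathcal{Q}_i\in\{B_i,D_i\}$ for $i\in\alpha$. I would then set $\beta:=\{\,i-3 : i\in\alpha\,\}$ (indices modulo $6$), so that $\beta+3=\alpha$, and construct for each $i\in\beta$ the ``pre-images'' $\mathcal{P}'_i\in\{A_i,B_i\}$ and $\mathcal{Q}'_i\in\{C_i,D_i\}$ determined by $F_1(\mathcal{P}'_i)=\mathcal{P}_{i+3}$ and $F_1(\mathcal{Q}'_i)=\mathcal{Q}_{i+3}$: namely $\mathcal{P}'_i=A_i$ if $\mathcal{P}_{i+3}=A_{i+3}$ and $\mathcal{P}'_i=B_i$ if $\mathcal{P}_{i+3}=C_{i+3}$, and likewise $\mathcal{Q}'_i=C_i$ if $\mathcal{Q}_{i+3}=B_{i+3}$ and $\mathcal{Q}'_i=D_i$ if $\mathcal{Q}_{i+3}=D_{i+3}$. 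By construction $\{\mathcal{P}'_i,\mathcal{Q}'_i\}_{i\in\beta}$ is a legitimate instance of the data allowed in Theorem~\ref{Theorem 5.1}, and $F_1$ sends its support onto the corollary's support:
\begin{equation*}
    F_1\Bigl(\bigl(\bigcup_{i\in\beta}\mathcal{P}'_i\bigr)\cup\bigl(\bigcup_{i\in\beta}\mathcal{Q}'_i\bigr)\Bigr)=\bigl(\bigcup_{i\in\alpha}\mathcal{P}_i\bigr)\cup\bigl(\bigcup_{i\in\alpha}\mathcal{Q}_i\bigr).
\end{equation*}

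The conclusion is then immediate. Proposition~\ref{pr} applied to $F_1$ gives $s_n\bigl((\bigcup_{i\in\alpha}\mathcal{P}_i)\cup(\bigcup_{i\in\alpha}\mathcal{Q}_i)\bigr)=s_n\bigl((\bigcup_{i\in\beta}\mathcal{P}'_i)\cup(\bigcup_{i\in\beta}\mathcal{Q}'_i)\bigr)$; Theorem~\ref{Theorem 5.1} rewrites the right-hand side as $2s_n(\bigcup_{i\in\beta}A_i)$; and a second use of Proposition~\ref{pr} for $F_1$, together with $F_1(\bigcup_{i\in\beta}A_i)=\bigcup_{i\in\beta}A_{i+3}=\bigcup_{i\in\alpha}A_i$, turns this into $2s_n(\bigcup_{i\in\alpha}A_i)$, which is exactly the claim. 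Because $i\mapsto i+3$ is a bijection of $\{1,\dots,6\}$ and the pre-image assignment above is invertible, every $\alpha$ and every admissible choice of the $\mathcal{P}_i,\mathcal{Q}_i$ arises in this way, so no case is omitted.

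The step I expect to be the main obstacle is the bookkeeping in the middle paragraph: one must verify carefully that $F_1$ matches the corollary's pairs $\{A_i,C_i\},\{B_i,D_i\}$ to the theorem's pairs $\{A_i,B_i\},\{C_i,D_i\}$ and that the induced index shift is harmless; everything else is a mechanical application of Proposition~\ref{pr}. I would remark that the corollary is the \emph{left--right mirror} of Theorem~\ref{Theorem 5.1}, so an alternative route would be to rerun that theorem's flip-map bijection verbatim but flipping the left column instead of the right, using $F_2$ only to supply the identity $s_n(\bigcup A_i)=s_n(\bigcup D_i)$; I expect the transformation argument above to be shorter and less error-prone.
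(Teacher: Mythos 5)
Your proof is correct, and at the top level it is the same strategy the paper uses: reduce the corollary to Theorem~\ref{Theorem 5.1} by transporting the support through one of Han's transformations via Proposition~\ref{pr}. There is, however, one substantive difference worth recording: the paper's one-line proof cites the map $F_2$, while you use $F_1$ --- and your choice is the one that actually works. Since $F_2$ swaps the categories $A\leftrightarrow D$ and $B\leftrightarrow C$ with the index fixed, it sends a corollary-type support (one piece from $\{A_i,C_i\}$ and one from $\{B_i,D_i\}$ for each $i\in\alpha$) to another corollary-type support, and likewise maps the family of theorem-type supports to itself; so $F_2$ alone never converts the corollary into an instance of Theorem~\ref{Theorem 5.1}. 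What is needed is a map exchanging the $B$- and $C$-categories while keeping $A$ and $D$, i.e.\ $F_1$ (or a composition such as $F_1\circ F_2$), and your middle paragraph verifies exactly this, including the harmless index shift: with $\beta=\alpha-3$ the preimage assignments $\mathcal{P}'_i\in\{A_i,B_i\}$, $\mathcal{Q}'_i\in\{C_i,D_i\}$ are a legitimate instance of the theorem, and the second application of Proposition~\ref{pr} with $F_1\bigl(\bigcup_{i\in\beta}A_i\bigr)=\bigcup_{i\in\alpha}A_i$ closes the argument. Your concluding remark is also consistent with the structure of the pieces: the pairs $\{A_i,C_i\}$ and $\{B_i,D_i\}$ share their right-hand columns and differ by a flip of the left-hand column, so the corollary is indeed the left--right mirror of Theorem~\ref{Theorem 5.1}, whose pairs share left-hand columns. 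In effect your write-up supplies the bookkeeping the paper omits and silently corrects its citation of $F_2$ to $F_1$.
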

     By the Theorem \ref{Theorem 5.1} and Corollary \ref{corollary 5.1.}, we can get many standard puzzle sequences from known sequences. For example, in Donald Knuth's paper \cite{Knuth}, he has shown the standard puzzle sequence of $\{A_1, A_4, B_3, B_6, C_3, C_6, D_1, D_4\}$, which is,
     \begin{theorem}
         \begin{equation}
             s_n\bigl(\{A_1, A_4, B_3, B_6, C_3, C_6, D_1, D_4\}\bigr)=W(n+1)
         \end{equation}
         where W(n) is the number of permutations p of $\{1\dots 2n\}$ such that $p[2k-1]<p[2k] \Longleftrightarrow  p[2k]<p[2k+1]$ in OEIS \cite{OEIS}, entry A261683.
     \end{theorem}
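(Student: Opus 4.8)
The plan is to recognize the eight-piece support $\{A_1, A_4, B_3, B_6, C_3, C_6, D_1, D_4\}$ as precisely Knuth's family of \emph{vortex} pieces and then invoke his enumeration. First I would verify the identification piece by piece: writing each piece as a $2\times 2$ array of corner labels and tracing the path $1\to 2\to 3\to 4$, I expect to find that each of these eight pieces sends $1,2,3,4$ around the four corners of the square in a single cyclic (Hamiltonian) sweep. The four that circulate clockwise are $D_1,B_6,A_4,B_3$ and the four that circulate counterclockwise are $C_3,A_1,C_6,D_4$; conversely, any vortex piece is a cyclic rotation of one of these two orientations, so the set is exactly the support Knuth studied. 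Since a standard $n$-puzzle has $2(n+1)=2n+2$ vertices, Knuth's count of vortex puzzles is a count of permutations of $[2n+2]$, which is why the answer surfaces as $W(n+1)$ rather than $W(n)$; matching this index shift is a small point I would check explicitly against the OEIS data for A261683.

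Having made the identification, the value $s_n=W(n+1)$ follows directly from \cite{Knuth}, where the defining condition $p[2k-1]<p[2k]\iff p[2k]<p[2k+1]$ --- ``no local extremum at an even position'' --- is exactly the shadow of the vortex constraint after the labels are read off the puzzle in the appropriate order. To place the statement inside our own framework I would then layer Theorem \ref{Theorem 5.1} on top: taking $\alpha=\{1,3,4,6\}$ together with the choices $\mathcal{P}_1=A_1,\ \mathcal{P}_3=B_3,\ \mathcal{P}_4=A_4,\ \mathcal{P}_6=B_6$ and $\mathcal{Q}_1=D_1,\ \mathcal{Q}_3=C_3,\ \mathcal{Q}_4=D_4,\ \mathcal{Q}_6=C_6$, the union on the left of Theorem \ref{Theorem 5.1} is exactly our support, so
\begin{equation*}
    s_n\bigl(\{A_1, A_4, B_3, B_6, C_3, C_6, D_1, D_4\}\bigr)=2\,s_n\bigl(\{A_1, A_3, A_4, A_6\}\bigr).
\end{equation*}
This both exposes the even parity of Knuth's sequence and yields, as a by-product, the new identity $s_n(\{A_1, A_3, A_4, A_6\})=\tfrac12 W(n+1)$, illustrating the ``known sequence $\to$ new sequence'' mechanism advertised just before the theorem.

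The main obstacle I anticipate is not algebraic but a matter of careful bookkeeping of orientations and indices, and --- if one wants a self-contained argument rather than a citation --- of pinning down Knuth's reading map. Verifying that precisely these eight reductions (and no others) are vortices requires scanning all $24$ standard pieces while staying consistent about the clockwise/counterclockwise convention and about which corner hosts the label $1$. The more delicate point is the threading order: one must confirm that reading the $2n+2$ corner labels through the puzzle produces a permutation of $[2n+2]$ whose even positions are never peaks or valleys exactly when every piece is a vortex, and that this reading is a genuine bijection onto the $W(n+1)$ admissible permutations. Once those two conventions are fixed, the index alignment and the Theorem \ref{Theorem 5.1} reduction are routine.
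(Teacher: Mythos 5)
Your proposal is correct and takes essentially the same route as the paper: the paper offers no independent proof of this theorem, but, just as you do, identifies the support $\{A_1, A_4, B_3, B_6, C_3, C_6, D_1, D_4\}$ as Knuth's vortex pieces and quotes the enumeration from \cite{Knuth} (your piece-by-piece vortex check and the index shift to $W(n+1)$ are the details the paper leaves implicit). Your final step via Theorem \ref{Theorem 5.1} with $\alpha=\{1,3,4,6\}$, yielding $s_n\bigl(\{A_1, A_3, A_4, A_6\}\bigr)=\tfrac{1}{2}W(n+1)$, is exactly the corollary the paper states immediately after the theorem.
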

     Then, we can get the following corollary by Theorem \ref{Theorem 5.1}.
     \begin{corollary}
         Let $\mathcal{P}_i \in \{A_i, B_i\}$, $\mathcal{Q}_i \in \{C_i, D_i\}$ and $i\in\alpha = \{1, 3, 4, 6\}$. 
         
         So, we have
         \begin{equation}
            s_n\bigl(\bigl(\bigcup_{i\in\alpha}\mathcal{P}_i\bigr)\bigcup\bigl(\bigcup_{i\in\alpha}\mathcal{Q}_i\bigr)\bigr)=2s_n\bigl(\{A_1, A_3, A_4, A_6\}\bigr)=W(n+1)
         \end{equation}
         where W(n) is the number of permutations p of $\{1\dots 2n\}$ such that $p[2k-1]<p[2k] \Longleftrightarrow  p[2k]<p[2k+1]$ in OEIS \cite{OEIS}, entry A261683.
     \end{corollary}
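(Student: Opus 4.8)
The plan is to recognize the claimed chain of identities as an immediate consequence of Theorem \ref{Theorem 5.1} specialized to $\alpha = \{1, 3, 4, 6\}$, combined with the value $W(n+1)$ recorded in the preceding theorem for Knuth's eight-piece support $\{A_1, A_4, B_3, B_6, C_3, C_6, D_1, D_4\}$. The only genuine task is to observe that this particular support is one of the sets covered by Theorem \ref{Theorem 5.1} under the admissible template, so that the factor of two becomes available.

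First I would fix $\alpha = \{1, 3, 4, 6\}$ and make the selections $\mathcal{P}_1 = A_1$, $\mathcal{P}_3 = B_3$, $\mathcal{P}_4 = A_4$, $\mathcal{P}_6 = B_6$, each lying in the allowed pair $\{A_i, B_i\}$, together with $\mathcal{Q}_1 = D_1$, $\mathcal{Q}_3 = C_3$, $\mathcal{Q}_4 = D_4$, $\mathcal{Q}_6 = C_6$, each lying in $\{C_i, D_i\}$. Taking the union then yields exactly
\begin{equation*}
    \bigl(\bigcup_{i\in\alpha}\mathcal{P}_i\bigr)\cup\bigl(\bigcup_{i\in\alpha}\mathcal{Q}_i\bigr) = \{A_1, A_4, B_3, B_6, C_3, C_6, D_1, D_4\},
\end{equation*}
which is precisely Knuth's support. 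Theorem \ref{Theorem 5.1} applied to this instance gives
\begin{equation*}
    s_n\bigl(\{A_1, A_4, B_3, B_6, C_3, C_6, D_1, D_4\}\bigr) = 2\, s_n\bigl(\{A_1, A_3, A_4, A_6\}\bigr),
\end{equation*}
since $\bigcup_{i\in\alpha} A_i = \{A_1, A_3, A_4, A_6\}$ for this $\alpha$.

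Next I would invoke the preceding theorem, which asserts that this same left-hand side equals $W(n+1)$. Chaining the two equalities immediately produces $2\, s_n(\{A_1, A_3, A_4, A_6\}) = W(n+1)$. Moreover, the first equality of the corollary holds for every admissible choice of the $\mathcal{P}_i$ and $\mathcal{Q}_i$ by Theorem \ref{Theorem 5.1} with $\alpha = \{1, 3, 4, 6\}$, not merely for the one selection above, so the entire displayed chain in the statement is established.

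There is no substantial obstacle here: the corollary is simply the composition of Theorem \ref{Theorem 5.1}, which supplies the factor of two and collapses every admissible support to the single all-$A$ support $\{A_1, A_3, A_4, A_6\}$, with the known evaluation of Knuth's sequence as $W(n+1)$. The one point requiring care is the bookkeeping that confirms Knuth's eight pieces really do arise from the template with $\alpha = \{1, 3, 4, 6\}$ under some choice of $\mathcal{P}_i \in \{A_i, B_i\}$ and $\mathcal{Q}_i \in \{C_i, D_i\}$; once this membership is verified the result follows with no further computation.
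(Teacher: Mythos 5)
Your proposal is correct and matches the paper's (implicit) derivation exactly: the paper obtains this corollary directly from Theorem \ref{Theorem 5.1} with $\alpha=\{1,3,4,6\}$ together with Knuth's evaluation of $s_n\bigl(\{A_1, A_4, B_3, B_6, C_3, C_6, D_1, D_4\}\bigr)=W(n+1)$, which is an admissible instance of the template. Your explicit verification that the choices $\mathcal{P}_1=A_1$, $\mathcal{P}_3=B_3$, $\mathcal{P}_4=A_4$, $\mathcal{P}_6=B_6$, $\mathcal{Q}_1=D_1$, $\mathcal{Q}_3=C_3$, $\mathcal{Q}_4=D_4$, $\mathcal{Q}_6=C_6$ recover Knuth's support simply fills in the bookkeeping the paper leaves to the reader.
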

     \begin{theorem}
         Let $\mathcal{P}$ be a subset of $\{A, B, C, D\}$, $\alpha$ be a subset of $\{1, 2, 3, 4, 5, 6\}$, and let $\mathcal{P}_\alpha $ be the set of each element of $\mathcal{P} $ with each element of $\alpha$ as a subscript. Then we have the following equation.
         \begin{equation}
             s_n\bigl(\mathcal{P}_\alpha\bigr)=
             s_n\bigl(A_\alpha\bigr)\cdot s_n\bigl(\mathcal{P}_1\bigr)
         \end{equation}
         \label{theorem:P=AP}
     \end{theorem}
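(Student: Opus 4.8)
The plan is to realise every puzzle as an independent pair of data: the \emph{orientation} of each of its $n+1$ columns (whether the top entry exceeds the bottom entry) and the \emph{interleaving pattern} of each adjacent pair of columns. The orientations will control the letters $A,B,C,D$ of the pieces, while the interleavings control the subscripts $1,\dots,6$. Concretely, for a standard $n$-puzzle let $\epsilon\in\{+,-\}^{n+1}$ be its sign vector, with $\epsilon_k=+$ precisely when the top entry of the $k$-th column is larger than its bottom entry. Let $V_n(\mathcal{P})$ denote the set of sign vectors all of whose consecutive pairs $(\epsilon_k,\epsilon_{k+1})$ name a letter lying in $\mathcal{P}$, under the identification $(+,+)=A$, $(+,-)=B$, $(-,+)=C$, $(-,-)=D$ (this identification is immediate from the column relations recorded in Table \ref{1}). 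I would then prove that
\[
  S_n(\mathcal{P}_\alpha)\ \longleftrightarrow\ S_n(A_\alpha)\times V_n(\mathcal{P})
\]
is a bijection, after which the theorem drops out.

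The crux is a single structural fact about the twenty-four pieces. Given any piece $X_i$, flip (interchange top and bottom of) each of its columns whose top entry is smaller than its bottom entry; the outcome is always the all-up piece $A_i$, with the \emph{same} subscript $i$. The conceptual reason is that the subscript records only how the two unordered value-pairs of the left and right columns interleave within the total order on the four labels, and flipping a column leaves its value-pair unchanged; the letter, meanwhile, records exactly the two column orientations. So ``letter'' and ``subscript'' are genuinely independent coordinates. The hard part is exactly this lemma: it is what decouples the two datums, and I expect it to need the most care. It reduces to a finite verification — over the six interleaving classes, equivalently over the six pieces $A_1,\dots,A_6$ — that the subscript is orientation-independent; once that is established the remainder is bookkeeping.

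Granting the lemma, the forward map sends $\pi\in S_n(\mathcal{P}_\alpha)$ to $(\pi',\epsilon)$, where $\epsilon$ is the sign vector of $\pi$ and $\pi'$ is obtained by flipping every $-$ column of $\pi$. By the lemma $\pi'$ is an all-$A$ puzzle whose subscripts coincide with those of $\pi$, hence lie in $\alpha$, so $\pi'\in S_n(A_\alpha)$; and since every piece of $\pi$ has its letter in $\mathcal{P}$, we get $\epsilon\in V_n(\mathcal{P})$. The inverse takes $(\pi',\epsilon)$ and flips back each column $k$ with $\epsilon_k=-$: flipping preserves subscripts, so they remain in $\alpha$, while the letters are now prescribed by $\epsilon$ and therefore lie in $\mathcal{P}$, placing the result in $S_n(\mathcal{P}_\alpha)$ (recall a support means every reduced piece belongs to the set). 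These two maps are patently mutually inverse, so the bijection holds and $s_n(\mathcal{P}_\alpha)=s_n(A_\alpha)\cdot|V_n(\mathcal{P})|$.

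It then remains to identify $|V_n(\mathcal{P})|$ with $s_n(\mathcal{P}_1)$. Applying the same bijection in the special case $\alpha=\{1\}$, where $\mathcal{P}_\alpha=\mathcal{P}_1$ and $A_\alpha=\{A_1\}$, gives $s_n(\mathcal{P}_1)=s_n(\{A_1\})\cdot|V_n(\mathcal{P})|$. But $S_n(\{A_1\})$ forces the top row to be strictly decreasing and the bottom row strictly increasing with every top exceeding every bottom, so it contains only the puzzle with top row $2n+2,2n+1,\dots,n+2$ and bottom row $1,2,\dots,n+1$; hence $s_n(\{A_1\})=1$ and $|V_n(\mathcal{P})|=s_n(\mathcal{P}_1)$. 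Substituting back yields $s_n(\mathcal{P}_\alpha)=s_n(A_\alpha)\cdot s_n(\mathcal{P}_1)$, completing the proof. As a consistency check one may take $\mathcal{P}=\{A,B,C\}$, $\alpha=\{1\}$: here $V_n(\mathcal{P})$ counts sign vectors of length $n+1$ with no two consecutive $-$, which is $F(n+3)$ by Lemma \ref{lemma1}, matching Theorem \ref{fibonacci}.
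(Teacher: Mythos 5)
Your proof is correct and is essentially the argument the paper intends: the paper omits its own proof, saying only that it is ``quite similar to'' Theorem \ref{fibonacci}, whose flip map $\phi_S$ is exactly your column-flipping bijection, with your $V_n(\mathcal{P})$ playing the role of the admissible flip sets and the observation $s_n(\{A_1\})=1$ effecting the identification $\bigl|V_n(\mathcal{P})\bigr|=s_n(\mathcal{P}_1)$. Your decoupling lemma (letter records the two column orientations, subscript records the interleaving of the column value-pairs, so flips preserve subscripts) is the finite verification the paper leaves implicit, and it checks out across all six subscript classes.
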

     The proof of this result is quite similar to the theorem \ref{fibonacci} and so is omitted.

     Additionally, we have known all the standard puzzle sequence of $\mathcal{P}_1$. That is, if the cardinality of $\mathcal{P}_1$ is 1 or 2 or 4, the sequence is trivial. And for $\bigl|\mathcal{P}_1\bigr|=3$, we have
     \begin{align}
         &s_n(\{A_1, B_1, C_1\})=s_n(\{B_1, C_1, D_1\})=F(n+2)\label{eq:79}\\
         &s_n(\{A_1, B_1, D_1\})=s_n(\{A_1, C_1, D_1\})=n+2\label{eq:80}
     \end{align}
     With the help of the theorem \ref{fibonacci} and the proposition \ref{pr} about the map $F_2$ in the Section \ref{Se:2}, the equation \ref{eq:79} has been shown. And the equation \ref{eq:80} is trivial so the proof will be omitted.
 \section{Concluding remarks}
     In conclusion, the following enumerative problem have been solved except one kind of questions in case (2) and (3). 

     (1) The sequences of all 80 simple pieces.

     (2) The sequences of a 1-simple or 4-simple pieces with i-converters,for $i\in\{1, 2\}$.

     (3) The sequences of a 1-simple pieces uniting a 4-simple piece with i-converters for $i\in\{1, 2\}$.

     The unique unsolved case is the standard puzzle sequences of $\{A_1, A_2, A_4, A_5\}$ with converters (resp. with converters and another 4-simple piece). In addition, with the help of identities in standard puzzle sequences proposed in Section \ref{Se:5}, we can enumerate much more sequences. 
     
     Although we have enumerated tens of thousands of sequences, a lot of sequence are given in the form of hypergeometric series and we expect to simplify them.
     Moreover, the sequence of the union of several 1-simple pieces haven't been solved, some of which are conjectures proposed by Guo-Niu Han in \cite{Han} such as the sequences of $\{A_1, A_3\}$, $\{A_2, A_5\}$ and so on.

     Finally, as G.-N. Han said in \cite{Han}, standard puzzles is ``a large family of combinatorial objects'', ``defined by very simple rules''. After our research and previous studies, many classical sequence have been interpreted by standard puzzles, like double factorial numbers, secant numbers, Fibonacci numbers, Catalan numbers \cite{Han}, tangent numbers \cite{Han}, the numbers of $2 \times n$ whirlpool permutations \cite{Knuth} and so on. However, it is still tens of millions (about $2^{4!}=16,777,216)$ of sequence that haven't been enumerated. This is an issue for future research to explore.


 \subsection*{Acknowledgments}
 We are grateful to Prof. Shishuo Fu for many useful discussions and for introducing us to this problem.
 
 \subsection*{Appendix}
 \begin{table}
     \small
     \centering
     \begin{tabular}{c|c|c|c|c}
         \hline
         number & 1-basic skeletons & sequences & simple pieces & remark\\
         \hline \hline
         1 & \includegraphics[scale=0.25]{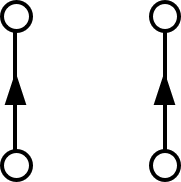}
         & $\binom{2n+2}{2, 2, \dots , 2}$ &  $\{A_1, A_2, A_3, A_4, A_5, A_6\}$ & trivial\\
         \hline
         2 & \includegraphics[scale=0.25]{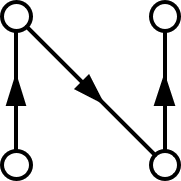}& 1 &  $\{A_3\}$ & trivial\\
         \hline
         3 & \includegraphics[scale=0.25]{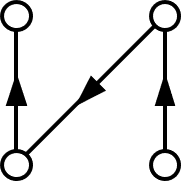}& 1 & $\{A_6\}$ & trivial\\
         \hline
         4 & \includegraphics[scale=0.25]{4.png}& $(2n+1)!!$ & $\{A_1, A_2, A_3\}$ & shown in \cite{Han}\\
         \hline
         5 & \includegraphics[scale=0.25]{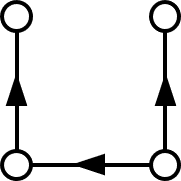}& $(2n+1)!!$ & $\{A_4, A_5, A_6\}$ & implied by entry 4\\ 
         \hline
         6 & \includegraphics[scale=0.25]{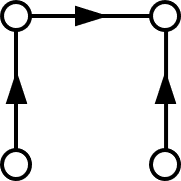}& $(2n+1)!!$ & $\{A_2, A_3, A_4\}$ & implied by entry 4\\ 
         \hline
         7 & \includegraphics[scale=0.25]{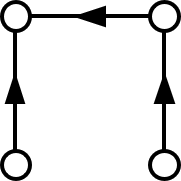}& $(2n+1)!!$ & $\{A_1, A_5, A_6\}$ & implied by entry 4\\
         \hline
         8 & \includegraphics[scale=0.25]{8.png}& secant numbers & $\{A_1, A_2, A_3, A_4, A_5\}$ & proved in Section 3\\
         \hline
         9 & \includegraphics[scale=0.25]{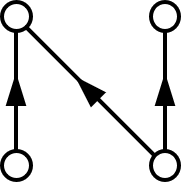}& secant numbers & $\{A_1, A_2, A_4, A_5, A_6\}$ & implied by entry 8\\
         \hline
         10 & \includegraphics[scale=0.25]{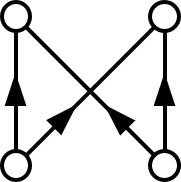}& \tabincell{c}{sequence in OEIS \\ entry A227656} & $\{A_1, A_2, A_4, A_5\}$ & proved in Section 3\\
         \hline
         11 & \includegraphics[scale=0.25]{12.png}& $(2n)!!$ & $\{A_1, A_2\}$ & shown in \cite{Han}\\
         \hline
         12 & \includegraphics[scale=0.25]{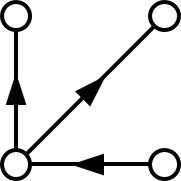}& $(2n)!!$ & $\{A_4, A_5\}$ & implied by entry 11\\
         \hline
         13 & \includegraphics[scale=0.25]{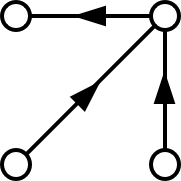}& $(2n)!!$ & $\{A_1, A_5\}$ & implied by entry 11\\
         \hline
         14 & \includegraphics[scale=0.25]{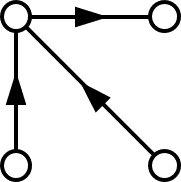}& $(2n)!!$ & $\{A_2, A_4\}$ & implied by entry 11\\
         \hline
         15 & \includegraphics[scale=0.25]{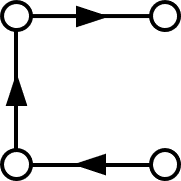}& 1 & $\{A_4\}$ & trivial\\
         \hline 
         16 & \includegraphics[scale=0.25]{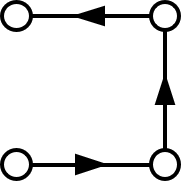}& 1 & $\{A_1\}$ & trivial\\
         \hline
         17 & \includegraphics[scale=0.25]{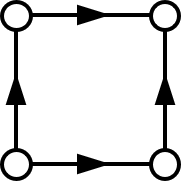}& $\frac{1}{n+2}\binom{2n+2}{n+1}$ & $\{A_2, A_3\}$ & shown in \cite{Han}\\ 
         \hline
         18 & \includegraphics[scale=0.25]{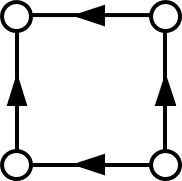}& $\frac{1}{n+2}\binom{2n+2}{n+1}$ & $\{A_5, A_6\}$ & implied by entry 13\\
         \hline
         19 & \includegraphics[scale=0.25]{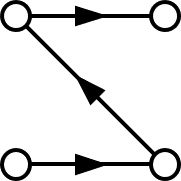}& $\frac{1}{n+1}\binom{2n}{n}$ & $\{A_2\}$ & shown in \cite{Han}\\
         \hline
         20 & \includegraphics[scale=0.25]{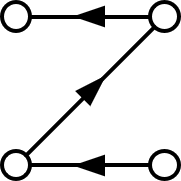}& $\frac{1}{n+1}\binom{2n}{n}$ & $\{A_5\}$ & implied by entry 19\\
         \hline
     \end{tabular}
     \caption{All the simple pieces and their sequences}
     \label{table}
 \end{table}

 \begin{table}
     \begin{enumerate}
         \item $\mathcal{P}=\{A_1, A_2, A_3, A_4, A_5, A_6\}$: $P_1(i, j, m)=\binom{2m-2}{2, 2, \dots , 2}$
         \item $\mathcal{P}=\{A_3\}$: $P_2(i, j, m)=1$ $(i=2m-1,j=1)$
         \item $\mathcal{P}=\{A_6\}$: $P_3(i, j, m)=1$ $(i=1,j=1)$
         \item $\mathcal{P}=\{A_1, A_2, A_3\}$: $P_4(i, j, m)=\frac{(i-1)!}{(2i-2m)!!} $ $(m\leq i\leq 2m-1)$
         \item $\mathcal{P}=\{ A_4, A_5, A_6\}$:      
             $P_5(i, j, m)=\left\{\begin{aligned}
                &(2m-3)!! &(i=1, m\neq 1)\\
             &1 &(i=1, m=1)\end{aligned}\right.$
         \item $\mathcal{P}=\{A_2, A_3, A_4\}$:      
             $P_6(i, j, m)=\left\{\begin{aligned}
                &(2m-3)!! &(&i+j=2m, m\neq 1)\\
             &1 &(&i=j=m=1)\end{aligned}\right.$
         \item $\mathcal{P}=\{A_1, A_5, A_6\}$: $P_7(i, j, m)=\frac{(2m-i-j)!}{(2m-2i-2j+2)!!} $ $(2\leq i+j\leq m+1)$
         \item $\mathcal{P}=\{A_1, A_2, A_3, A_4, A_5\}$: $P_8(i, j, m)=E(2m-2, i+j-2)$ where E(n, k) is the number of down-up permutations of n+1 starting with k+1 in OEIS \cite{OEIS} entry A008282.
         \item $\mathcal{P}=\{A_1, A_2, A_4, A_5, A_6\}$: $P_9(i, j, m)=E(2m-2, 2m-i)$ where E(n, k) is the number of down-up permutations of n+1 starting with k+1 in OEIS \cite{OEIS} entry A008282.
         \item $\mathcal{P}=\{A_1, A_2, A_4, A_5\}$: $P_{10}(i, j, m)$ hasn't been found.
         \item $\mathcal{P}=\{A_1, A_2\}$: 
         $P_{11}(i, j, m)=\left\{\begin{aligned}
            &\frac{(2m-1-i)\cdot(i-2)!}{(2i-2m)!!} &(&m\leq i \leq 2m-2, m\neq1)&\\&1 &(&i=j=m=1)
        \end{aligned}\right.$
         \item $\mathcal{P}=\{A_4, A_5\}$: 
         $P_{12}(i, j, m)=\left\{\begin{aligned}
             &(2m-4)!! &(&i=1,j\geq 3, m\neq 1)\\
             &1 &(&i=j=m=1)
         \end{aligned}\right.$
         \item $\mathcal{P}=\{A_1, A_5\}$: 
         $P_{13}(i, j, m)=\left\{\begin{aligned}
             &\frac{(i+j-2)\cdot(2m-1-j-i)!}{(2m+2-2j-2i)!!} &(&3\leq i+j \leq m+1,m\neq 1)
             \\&1 &(&i=1, j=2, m=1)
            \end{aligned}\right.$
         \item $\mathcal{P}=\{A_2, A_4\}$:
         $P_{14}(i, j, m)=\left\{\begin{aligned}
             &(2m-4)!! &(&i+j=2m, i\leq 2m-2, m\neq 1)\\
             &1 &(&i=j=m=1)
         \end{aligned}\right.$
         \item $\mathcal{P}=\{A_4\}$: $P_{15}(i, j, m)=1 $ $(i=1, j=2m-1)$
         \item $\mathcal{P}=\{A_1\}$: $P_{16}(i, j, m)=1 $ $(i=m, j=1)$
         \item $\mathcal{P}=\{A_2, A_3\}$: $P_{17}(i, j, m)=\frac{2m-i}{m}\binom{i-1}{m-1} $ $(m\leq i\leq 2m-1, i+j=2m)$
         \item $\mathcal{P}=\{A_5, A_6\}$: $P_{18}(i, j, m)=\frac{i+j-1}{m}\binom{2m-i-j}{m-1} $ $(i=1, 1\leq j\leq m)$
         \item $\mathcal{P}=\{A_2\}$: 
         $P_{19}(i, j, m)=\left\{\begin{aligned}
             &\frac{2m-i-1}{m-1}\cdot\binom{i-2}{m-2} &(&i+j=2m, m\leq i \leq 2m-2)\\&1&(&i=j=m=1)
         \end{aligned}\right.$
         \item $\mathcal{P}=\{A_5\}$: 
         $P_{20}(i, j, m)=\left\{\begin{aligned}
             &\frac{i+j-2}{m-1}\binom{2m-i-j-1}{m-2} &(&i=1,2\leq j \leq m)\\&1&(&i=j=m=1)
         \end{aligned}\right.$    
     \end{enumerate}
     \caption{All 20 simple pieces and their corresponding $P_x(i, j, m)$}
     \label{appendix}
\end{table}

 \end{document}